\NeedsTeXFormat{LaTeX2e}
\documentclass[11pt,reqno]{amsart}
\usepackage{graphicx}
\usepackage{enumitem}
\setlist[itemize]{leftmargin=2.5em}
\setlist[enumerate]{font={\upshape}, label=\arabic*., leftmargin=2.5em}
\usepackage{mathtools}
\usepackage{amssymb}

\usepackage[pdftex
%,pagebackref
]{hyperref}
\hypersetup{
	colorlinks=true,
	allcolors=blue,
}
\usepackage{cite}
\usepackage[capitalize]{cleveref}
\usepackage[margin=0.8in]{geometry}
\newtheorem{lemma}{Lemma}[section]
\newtheorem{claim}[lemma]{Claim}
\Crefname{claim}{Claim}{Claims}
\newtheorem{theorem}[lemma]{Theorem}
\newtheorem{corollary}[lemma]{Corollary}
\expandafter\let\expandafter\oldproof\csname\string\proof\endcsname
\let\oldendproof\endproof
\renewenvironment{proof}[1][\proofname]{%
	\oldproof[\normalfont\bfseries #1]%
}{\oldendproof}
\newenvironment{subproof}[1][\normalfont\it\proofname]{%
	\begin{proof}[#1]%
	}{%
	\end{proof}%
}
\newcommand{\dd}{\textquotedblleft}
\newcommand{\ee}{\textquotedblright}
\newcommand{\mac}{\mathcal}

\newcommand{\vep}{\varepsilon}
\newcommand{\nin}{\notin}
\renewcommand{\subset}{\subseteq}
\renewcommand{\supset}{\supseteq}
\newcommand{\edge}{\operatorname{\mathsf{e}}}
\DeclarePairedDelimiter\abs{\lvert}{\rvert}%
\DeclarePairedDelimiter\ceil{\lceil}{\rceil}%
\DeclarePairedDelimiter\floor{\lfloor}{\rfloor}%
\makeatletter
\def\old@comma{,}
\catcode`\,=13
\def,{%
	\ifmmode%
	\old@comma\discretionary{}{}{}%
	\else%
	\old@comma%
	\fi%
}
\makeatother
\makeatletter
\newcommand{\leqnomode}{\tagsleft@true}
\newcommand{\reqnomode}{\tagsleft@false}
\makeatother
%\parindent 0pt

%\date{June 20, 2022; revised \today}
%\setcounter{tocdepth}{1}
\begin{document}	
	\title{Linear-sized minors with given edge density}
	\author{Tung H. Nguyen}
	\address{Princeton University, Princeton, NJ 08544, USA}
	\email{\href{mailto:tunghn@math.princeton.edu}
		{tunghn@math.princeton.edu}}
	\thanks{Partially supported by AFOSR grants A9550-10-1-0187 and FA9550-22-1-0234, and NSF grant DMS-2154169.}
	\begin{abstract}
		It is proved that 
		for every $\vep>0$,
		there exists $K>0$ such that
		for every integer $t\ge2$,
		every graph with chromatic number at least 
		$Kt$ contains a minor with $t$ vertices and edge density at least $1-\vep$.
		Indeed, building on recent work of Delcourt and Postle on linear Hadwiger's conjecture, for $\vep\in(0,\frac{1}{256})$
		we can take $K=C\log\log(1/\vep)$ where $C>0$ is a universal constant,
		which extends their recent $O(t\log\log t)$ bound on the chromatic number of graphs with no $K_t$ minor.
	\end{abstract}
	\maketitle
	\section{Introduction}
	All graphs in this paper are finite and with no loops or parallel edges.
	Given a graph $G$,
	a \emph{minor} of $G$ is a graph obtained from a subgraph of $G$ by a sequence of edge contractions;
	and the {\em chromatic number} of $G$, denoted by $\chi(G)$,
	is the least integer $n\ge0$ such that the vertices of $G$ can be colored by $n$ colors so that no two adjacent vertices get the same color.
	
	For integer $t\ge2$, let $K_t$ denote the complete graph on $t$ vertices, and let $h(t)$ be the least positive integer such that
	every graph with chromatic number at least $h(t)$ contains a $K_t$ minor.
	Hadwiger's well-known conjecture~\cite{MR12237} from 1943 states that $h(t)\le t$ for all $t\ge2$.
	This is a vast generalization of the four-color theorem and is wide open for $t\ge7$.
	(See the survey~\cite{MR3526944} for more details.)
	Linear Hadwiger's conjecture~\cite{MR1654153}, one of its weakenings, is still open and asks for the existence of a universal constant $C>0$ such that $h(t)\le Ct$.
	
	Since every graph with chromatic number at least $n\ge1$ has a subgraph with minimum degree at least $n-1$,
	Mader~\cite{MR229550} initiated the approach via the average degree and
	proved that for some constant $C>0$, every graph with average degree at least $Ct\log t$ contains a $K_t$ minor\footnote{In this paper $\log$ denotes the natural logarithm.}.
	Kostochka~\cite{MR713722,MR779891} and Thomason~\cite{MR735367} independently improved this to $Ct\sqrt{\log t}$,
	which is optimal up to the constant factor $C$ by random graph constructions.
	Thus $h(t)\le O(t\sqrt{\log t})$,
	which remained the record for nearly four decades
	until Norin, Postle, and Song~\cite{nps2019} showed that $h(t)\le C_{\vep}t(\log t)^{1/4+\vep}$ for every $\vep>0$ and $t\ge2$,
	where $C_{\vep}>0$ is a constant depending on $\vep$.
	Following subsequent improvements,
	Delcourt and Postle~\cite{delcourt2021reducing} very recently achieved the best known bound $h(t)\le O(t\log\log t)$.
	
	Another natural relaxation for Hadwiger's conjecture seeks to optimize the edge density of a minor on $t$ vertices of every graph with chromatic number at least $t$.
	By a theorem of Mader~\cite{MR229550}, such a graph contains a minor with at most $t$ vertices and minimum degree at least $t/2$,
	and hence contains a minor on $t$ vertices and with at least $\frac{1}{4}{t\choose2}$ edges.
	Norin and Seymour~\cite{nst2022} very recently showed that for all sufficiently large integers $n$, every graph on $n$ vertices and with stability number two has a minor with $\ceil{n/2}$ vertices and fewer than $1/76$ of all possible edges missing\footnote{By a theorem of Plummer, Stiebitz, and Toft~\cite{MR2070161}, Hadwiger's conjecture for graphs with stability number two is equivalent to the statement that every graph with $n$ vertices and stability number two has a $K_{\ceil{n/2}}$ minor.}.
	
	The main aim of this paper is to provide a similar relaxation for linear Hadwiger's conjecture.
	For $\vep>0$ and an integer $t\ge2$,
	an \emph{$(\vep,t)$-dense} graph is a graph with $t$ vertices and at least $(1-\vep){t\choose2}$ edges.
	Thus every $(\vep,t)$-dense graph is a copy of $K_t$ if $\vep\le t^{-2}$;
	and by averaging, every $(\vep,n)$-dense graph contains an $(\vep,t)$-dense subgraph for every $n\ge t\ge2$.
	Here is our main result.
	\begin{theorem}
		\label{thm:chi}
		There is an integer $C>0$ such that for every $\vep\in(0,\frac{1}{256})$ and every integer $t\ge2$,
		every graph with chromatic number at least $Ct\log\log(1/\vep)$ contains an $(\vep,t)$-dense minor.
	\end{theorem}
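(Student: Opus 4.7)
The plan is to adapt Delcourt and Postle's proof of $h(t)=O(t\log\log t)$ so that its iteration can be truncated according to $\vep$: the case $\vep\to 0$ should recover their bound, while $\vep$ bounded away from zero should require only linear chromatic number.

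First I would establish a base case. A chromatic number of $\Omega(t)$ already forces an $(\alpha_0,t)$-dense minor for some absolute constant $\alpha_0<1$, by taking a colour-critical subgraph of minimum degree $\Omega(t)$ and extracting a minor via a greedy contraction argument; the Mader-type observation mentioned in the introduction, which gives any graph of chromatic number $\ge t$ a minor on $t$ vertices with $\tfrac14\binom{t}{2}$ edges, already suffices for this step.

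Second, and this is the heart of the proof, I would prove a density-squaring refinement lemma: if $G$ has chromatic number at least $Ct$ and admits a minor on $t$ vertices missing at most $\beta\binom{t}{2}$ edges, then $G$ admits a minor on $t$ vertices missing at most $\beta^2\binom{t}{2}$ edges. The key is that only a $\beta$-fraction of pairs of branch sets needs to be patched, so the patching should cost only an \emph{additive} $O(t)$ in chromatic number, independent of $\beta$, using Delcourt-Postle's well-linked set and small-separator machinery re-parameterised to be sensitive to the current defect count.

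Iterating this squaring lemma $k$ times starting from an $(\alpha_0,t)$-dense minor yields a minor on $t$ vertices missing at most $\alpha_0^{2^k}\binom{t}{2}$ edges. Choosing $k=\lceil\log_2\log_2(1/\vep)\rceil+O(1)$ drives the deficiency below $\vep\binom{t}{2}$, while the total chromatic number expended is $O(t)\cdot k=O(t\log\log(1/\vep))$, as required.

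The main obstacle will be proving the density-squaring lemma with an additive (not multiplicative) per-iteration chromatic number cost. Isolating the structural step in Delcourt-Postle's proof that performs one round of defect reduction, and verifying that its cost scales with the current defect count rather than with $t$, is what would allow their $\log\log t$ iteration limit to be replaced here by $\log\log(1/\vep)$; everything else (the base case, the bookkeeping of the iteration, and the matching with $\vep\le 1/256$) should be routine by comparison.
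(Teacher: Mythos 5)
Your proposal is a genuinely different route from the paper's, and its central step is unjustified. You propose a ``density-squaring lemma'': from a $(\beta,t)$-dense minor, produce a $(\beta^2,t)$-dense minor at an \emph{additive} chromatic-number cost of $O(t)$, uniformly in $\beta$. Nothing in Delcourt--Postle's argument (nor in this paper) performs an in-place defect-halving or defect-squaring on a fixed-size minor model. Both proofs are \emph{bottom-up} constructions: they build an $(\vep,a)$-dense model by combining several smaller $(\vep',a')$-dense models, with the model \emph{size shrinking} and the nonedge density growing \emph{multiplicatively by a constant factor} $\frac43$ per level of a ternary recursion tree of depth $\Theta(\log\log(1/\vep))$; they never revisit a model of size $t$ and improve its density at fixed size. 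Your heuristic --- ``only a $\beta$-fraction of pairs needs patching, so it costs $O(t)$'' --- does not survive scrutiny: when $\beta$ is a constant (as in your base case $\alpha_0$), the number of anticomplete pairs to repair is $\Theta(t^2)$, each repair competes for the same highly-connected auxiliary structure, and there is no mechanism in the DP machinery by which such a repair step costs only $O(t)$ in chromatic number independent of how many pairs you must fix. So the lemma you isolate as ``the main obstacle'' is, as far as I can see, not a re-parameterization of an existing DP step but a new claim that you would need to prove from scratch.

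Even the bookkeeping that yields $\log\log(1/\vep)$ in the paper is located somewhere else entirely, which is worth flagging because it changes what you'd actually need to show. The paper reduces (Theorem~\ref{thm:main}) to coloring $(\vep,a)$-dense-minor-free subgraphs of size $O(a\log^4(1/\vep))$, and then applies a Duchet--Meyniel-style chromatic bound (Corollary~\ref{cor:main}): a $K_a$-minor-free graph on $N$ vertices has $\chi\le 4a\log(N/a)$, so here $\chi\le O(a\log\log(1/\vep))$. That single logarithm of $\mathrm{polylog}(1/\vep)$ is the whole source of the $\log\log(1/\vep)$ factor; the recursion depth never enters the chromatic bound as an explicit ``iterations times per-iteration cost'' sum. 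So reaching the target bound is not a matter of counting $\log\log(1/\vep)$ iterations at $O(t)$ each, as in your sketch, and you would need a genuinely new argument for your squaring lemma rather than an extraction from Delcourt--Postle.
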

	\cref{thm:chi} has two implications:
	first, given any number strictly smaller than $1$, one can find in a graph a minor whose edge density is at least that number
	and whose size is linear in terms of the chromatic number of the host graph;
	second, by letting $\vep=t^{-2}$ for $t$ large,
	one can recover the $O(t\log\log t)$ bound of Delcourt and Postle.
	In fact, our proof of \cref{thm:chi} is a modification of their argument,
	illustrating that the \dd redundant\ee{} $\log$ factor can be interpreted as a quantity depending on the edge density of the desired minors instead of the number of vertices.
	Along the way, we shall see in the next sections that this phenomenon, in fact, also holds for several known results in the area.
	For example,
	we shall prove that in order to color graphs with no $(\vep,t)$-dense minors,
	it is enough to color those with not too many vertices;
	this is similar to~\cite[Theorem 1.6]{delcourt2021reducing}
	but with $1/\vep$ inside all of the $\log$ terms.
	\begin{theorem}
		\label{thm:main}
		There is an integer $C_{\ref{thm:main}}>0$ such that the following holds.
		Let $\vep\in(0,\frac{1}{256})$,
		and let $t\ge 4\log^2(1/\vep)$ be an integer.
		For every graph $G$, let\footnote{Here $F\subset G$ means $F$ is a subgraph of $G$.}
		\[f_{\ref{thm:main}}(G,\vep,t):=
		1+\max_{F\subset G}\left\{\frac{\chi(F)}{a}: 
		a\ge\frac{t}{\sqrt{2\log(1/\vep)}},\,\abs{F}\le C_{\ref{thm:main}}a\log^4(1/\vep),\,
		\text{$F$ is $(\vep,a)$-dense-minor-free}\right\}.\]
		If $\chi(G)\ge C_{\ref{thm:main}}t\cdot f_{\ref{thm:main}}(G,\vep,t)$,
		then $G$ contains an $(\vep,t)$-dense minor.
	\end{theorem}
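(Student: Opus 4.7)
The plan is to prove the contrapositive: if $G$ contains no $(\vep,t)$-dense minor, then $\chi(G)<C_{\ref{thm:main}}t\cdot f_{\ref{thm:main}}(G,\vep,t)$. This is a direct dense-minor analogue of the Delcourt--Postle reduction theorem~\cite[Theorem~1.6]{delcourt2021reducing}, and the overall strategy is to adapt their proof by replacing every occurrence of $\log t$ by $\log(1/\vep)$. The key conceptual observation is that avoiding an $(\vep,t)$-dense minor only costs a $\vep$-fraction of the edges that a $K_t$ minor would contain, so the Norin--Postle--Song density-increment arguments driving Delcourt--Postle's proof scale naturally with this deficiency.

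Setting $a:=\lceil t/\sqrt{2\log(1/\vep)}\rceil$ for the smallest admissible scale in the definition of $f_{\ref{thm:main}}$, the task reduces to a structural lemma: any graph $H$ with $\chi(H)\ge Ca$ and no $(\vep,t)$-dense minor contains a subgraph $F$ on at most $Ca\log^4(1/\vep)$ vertices that is $(\vep,a)$-dense-minor-free and satisfies a chromatic lower bound of the form $\chi(F)\ge\chi(H)\cdot a/(C_{\ref{thm:main}}t)$, unless $H$ already contains an $(\vep,t)$-dense minor. First I would replace $G$ by a vertex-critical subgraph $H_0$ with $\chi(H_0)=\chi(G)$ and minimum degree $\ge\chi(G)-1$, still $(\vep,t)$-dense-minor-free. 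Applying the lemma to $H_0$ produces an $F$ meeting all constraints of the maximum in $f_{\ref{thm:main}}$, whence $\chi(F)/a\le f_{\ref{thm:main}}(G,\vep,t)-1$ and, combined with the chromatic lower bound, $\chi(G)=\chi(H_0)\le(C_{\ref{thm:main}}t/a)\cdot\chi(F)\le C_{\ref{thm:main}}t\cdot f_{\ref{thm:main}}(G,\vep,t)$.

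The structural lemma is the main obstacle, and I would prove it by adapting the iterative bag-contraction framework of~\cite{nps2019, delcourt2021reducing} with two critical modifications. First, the density threshold that triggers the \dd win\ee{} step (at which contracted bags collectively witness the forbidden minor) is lowered from $\binom{t}{2}$ to $(1-\vep)\binom{t}{2}$; the saved $\vep t^2$ edges are precisely what allows $\log t$ to be replaced by $\log(1/\vep)$ throughout the estimates of bag sizes and iteration counts. Second, the termination criterion is weakened from $K_a$-minor-freeness of $F$ to $(\vep,a)$-dense-minor-freeness, which is self-consistent: any $(\vep,a)$-dense minor of $F$ would, on reversing the contractions producing $F$ from $H_0$, yield an $(\vep,t)$-dense minor of $H_0$, contradicting the hypothesis on $G$. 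The exponent $4$ in $\log^4(1/\vep)$ should emerge from a Kostochka--Thomason-type density step composed with a connectivity-increment step, mirroring~\cite{nps2019} but parameterized by $\log(1/\vep)$. The most delicate point is calibrating the constants so that the preserved fraction of $\chi(H_0)$ matches the ratio $a/t=1/\sqrt{2\log(1/\vep)}$ exactly; it is this balance that forces the particular scale $a\ge t/\sqrt{2\log(1/\vep)}$ and the $\log^4$ size budget appearing in the definition of $f_{\ref{thm:main}}$.
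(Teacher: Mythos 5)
Your proposal correctly identifies the high-level strategy (contrapositive, adapt Delcourt--Postle, replace $\log t$ by $\log(1/\vep)$), which is indeed the paper's stated approach, but it is not a proof: the ``structural lemma'' you reduce to is essentially the theorem itself restated, and you offer no argument for it beyond ``adapt the bag-contraction framework with two modifications.'' The actual proof requires substantial machinery that your sketch does not touch: the dichotomy between the chromatic-\emph{inseparable} case (handled by an iterative construction over about $\sqrt{\log(1/\vep)}$ rounds in \cref{thm:insep}/\cref{lem:chi-insep}) and the chromatic-separable case (handled by a recursion over a ternary tree of depth about $\log\log(1/\vep)$ in \cref{lem:chisepwoven}); the notion of $(\vep,a,b)$-wovenness (\cref{lem:woven,lem:woven1,lem:woven2}), which is what lets you build a rooted dense model while simultaneously rerouting a linkage around it; the rooted-minor tool \cref{cor:rooted}; the knit property \cref{thm:knit}; and the asymmetric bipartite density theorem \cref{thm:avgdegbip}. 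None of these are routine re-parametrizations of the $K_t$-minor argument.

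Several of your concrete claims are also wrong. The assertion that ``any $(\vep,a)$-dense minor of $F$ would, on reversing the contractions producing $F$ from $H_0$, yield an $(\vep,t)$-dense minor of $H_0$'' fails: a minor of $F$ is indeed a minor of $H_0$, but it has only $a<t$ vertices, and un-contracting does not produce a $t$-vertex minor with controlled edge density --- this is precisely the nontrivial combination step that the iterative and recursive constructions perform. The exponent $4$ in $\log^4(1/\vep)$ does not come from ``a Kostochka--Thomason-type density step composed with a connectivity-increment step''; it arises because \cref{thm:smalldense} produces a subgraph on $O(t\log^3(1/\vep))$ vertices and \cref{cor:smalldense} then packs $\log(1/\vep)$ disjoint such subgraphs, giving the extra factor. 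And fixing $a$ at the smallest admissible scale $\lceil t/\sqrt{2\log(1/\vep)}\rceil$ throws away the flexibility the recursion actually needs: in \cref{lem:chisepwoven} the maximum defining $f_{\ref{thm:main}}$ is invoked at every intermediate scale $a=(\tfrac23)^s t$ for $s$ ranging up to $\log_3 t$, not just at the bottom. Finally, the clause ``unless $H$ already contains an $(\vep,t)$-dense minor'' is redundant with the lemma's hypothesis, a symptom of the statement not having been pinned down carefully enough to admit a proof.
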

	Let us see how \cref{thm:main} yields \cref{thm:chi}.
	We need a result first proved by Woodall~\cite[Theorem 4]{MR889352}, which also follows from an elegant argument of Duchet and Meyniel~\cite{MR671905} (see~\cite[Theorem 4.2]{MR3526944}).
	\begin{theorem}
		\label{thm:duchetmeyniel}
		For every integer $t\ge2$, every graph $G$ with no $K_t$ minor contains an induced subgraph with chromatic number less than $t$ and at least $\frac{1}{2}\abs{G}$ vertices.
	\end{theorem}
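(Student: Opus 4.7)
The plan is to follow the elegant Duchet--Meyniel argument referenced in the paper. First I would reduce to the case that $G$ is connected: if $G$ has connected components $G_1, \ldots, G_m$, then each $G_i$ has no $K_t$ minor, so applying the statement to each and taking the disjoint union of the resulting induced subgraphs preserves both $\chi(H) \le t - 1$ and $|H| \ge |G|/2$. So assume henceforth that $G$ is connected.

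Next I would work with a maximal clique-minor structure in $G$. Choose pairwise-disjoint, pairwise-adjacent connected subsets $B_1, \ldots, B_k \subseteq V(G)$ with $k$ as large as possible, and, subject to that, with $\sum_i \abs{B_i}$ as large as possible. Since $G$ has no $K_t$ minor we have $k \le t - 1$, and a standard maximality argument using the connectedness of $G$ shows that $\bigcup_i B_i = V(G)$: any uncovered vertex would have a neighbor in some $B_i$ by connectedness, and could be absorbed into that $B_i$, contradicting maximality.

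The heart of the proof is then to extract from this partition an induced subgraph $H \subseteq G$ with $\abs{H} \ge \abs{G}/2$ and $\chi(H) \le k \le t - 1$. I would try to pick, inside each $B_i$, a subset $H_i$ that is independent in $G[B_i]$ with $\sum_i \abs{H_i} \ge \abs{G}/2$; then assigning color $i$ to every vertex of $H_i$ yields a proper $k$-coloring of $H := \bigsqcup_i H_i$, since any monochromatic edge would have to lie inside some $G[B_i]$ and is excluded by the independence of $H_i$ there. Edges of $G$ between $H_i$ and $H_j$ for $i \neq j$ join different color classes, so they cause no trouble.

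The main obstacle is producing these $H_i$'s with the required total size: for arbitrary branch sets one could have $\alpha(G[B_i])$ as small as $1$ (e.g.\ if $G[B_i]$ is a clique), so $\sum_i \abs{H_i} \ge \abs{G}/2$ does not follow without further refinement. I expect this to be resolved by strengthening the maximality on the $B_i$'s — for instance by a tertiary minimization that forces each $B_i$ to be spanned by a minimal Steiner tree of portals to the other branch sets — together with vertex exchanges between branch sets whenever internal clique structure emerges. Closing this exchange argument so that $\sum_i \alpha(G[B_i]) \ge \abs{G}/2$ is the principal technical difficulty I anticipate; alternatively, one could replace this step by iteratively applying the Duchet--Meyniel bound $\alpha(G) \ge \abs{G}/(2(t-1))$ on successive induced subgraphs, though the naive version of that iteration falls slightly short of $\abs{G}/2$ and would need an extra sharpening.
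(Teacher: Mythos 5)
Your setup — reducing to connected $G$ and taking a maximal $K_k$-model $B_1,\ldots,B_k$ covering $V(G)$ — is fine, and you have correctly diagnosed the central difficulty yourself: nothing in your maximality conditions gives $\sum_i\alpha(G[B_i])\ge\abs{G}/2$, since $G[B_i]$ can in principle be dense. The tertiary-minimization/exchange argument you gesture at is not carried out, and you say yourself you cannot close it, so as written this is a genuine gap rather than a proof. Your fallback of iterating $\alpha(G)\ge\abs{G}/(2(t-1))$ over $t-1$ rounds also fails for the reason you suspect: removing a stable set of that size $t-1$ times only accounts for $\abs{G}\bigl(1-(1-\tfrac{1}{2(t-1)})^{t-1}\bigr)$ vertices, which tends to $(1-e^{-1/2})\abs{G}\approx 0.39\abs{G}$ and is already below $\abs{G}/2$ at $t=3$.

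The Duchet--Meyniel argument the paper is invoking uses a different recursion: on $t$, not on branch sets. Given connected $G$, one greedily builds a connected dominating set $D$ together with a stable set $S\subset D$ with $\abs{S}\ge(\abs{D}+1)/2$, by repeatedly adding to the current $D$ a vertex at distance two from $D$ (placed into $S$, which stays stable since it has no neighbour in $D$) together with an intermediate vertex of a shortest path to $D$ (which keeps $D$ connected). The observation your proposal is missing is that $G\setminus D$ has no $K_{t-1}$ minor: a $K_{t-1}$ model in $G\setminus D$ together with $D$ itself — which is connected and, being dominating, adjacent to every branch set — would be a $K_t$ model in $G$. Applying the statement inductively with parameter $t-1$ to each component of $G\setminus D$ yields an induced subgraph $H'$ with $\chi(H')<t-1$ and $\abs{H'}\ge(\abs{G}-\abs{D})/2$; setting $H:=G[V(H')\cup S]$ and giving $S$ a fresh colour gives $\chi(H)<t$ and $\abs{H}\ge(\abs{G}-\abs{D})/2+(\abs{D}+1)/2>\abs{G}/2$. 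It is the drop from $K_t$ to $K_{t-1}$ upon deleting a connected dominating set that makes the vertex count close, and this is precisely what your branch-set decomposition does not supply: the pieces $B_i$ have no reason to contain large stable sets, whereas the constructed $D$ does by design.
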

	Here is an immediate corollary of~\cref{thm:duchetmeyniel}.
	\begin{corollary}
		\label{cor:main}
		For every integer $t\ge2$,
		every $K_t$-minor-free graph $G$ with $\abs{G}\ge 3t$ satisfies $\chi(G)\le 4t\log(\abs{G}/t)$.
	\end{corollary}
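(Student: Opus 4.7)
The plan is to iterate \cref{thm:duchetmeyniel} greedily: repeatedly peel off induced subgraphs of chromatic number less than $t$ that cover at least half of the remaining vertices, until only a handful of vertices are left, and then color what remains trivially. Set $G_0:=G$, and at each step $i\ge1$ apply \cref{thm:duchetmeyniel} to $G_{i-1}$ (which is still $K_t$-minor-free, since minor-freeness passes to subgraphs) to obtain an induced subgraph $H_i\subset G_{i-1}$ with $\chi(H_i)<t$ and $\abs{H_i}\ge\abs{G_{i-1}}/2$; then set $G_i:=G_{i-1}\setminus V(H_i)$, so that $\abs{G_i}\le\abs{G}/2^i$.

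I would stop after $k:=\ceil{\log_2(\abs{G}/t)}$ iterations, at which point $\abs{G_k}\le t$, and $G_k$ admits a trivial $t$-coloring in which every vertex gets its own color. Since $V(H_1),\dots,V(H_k),V(G_k)$ partition $V(G)$, using disjoint palettes across the pieces yields
\[\chi(G)\le(t-1)k+t\le(t-1)\left(\frac{\log(\abs{G}/t)}{\log 2}+1\right)+t\le\frac{t\log(\abs{G}/t)}{\log 2}+2t.\]
The hypothesis $\abs{G}\ge3t$ gives $\log(\abs{G}/t)\ge\log 3>1$, so the additive $2t$ can be absorbed into a logarithmic factor, leading to $\chi(G)\le(1/\log 2+2/\log 3)\,t\log(\abs{G}/t)<4t\log(\abs{G}/t)$, as required.

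I do not anticipate any real obstacle here: the whole argument is a textbook greedy decomposition and the numerical constants have substantial slack. The only items worth double-checking are that using $t-1$ (rather than $t$) colors per $H_i$ combines correctly with the trivial $t$-coloring of $G_k$, and that the ceiling in the choice of $k$ does not introduce a spurious logarithmic term.
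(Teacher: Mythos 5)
Your proof is correct and follows essentially the same approach as the paper: both iterate Theorem~\ref{thm:duchetmeyniel} to peel off induced subgraphs of chromatic number less than $t$ covering half the remaining vertices, then color the pieces with disjoint palettes. The only cosmetic difference is the cutoff for the residual piece (you stop at $\le t$ vertices and color trivially with $t$ colors, whereas the paper stops earlier at $\le t\log(\abs{G}/t)$ vertices); both yield the constant $4$ with room to spare.
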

	\begin{proof}
		Let $q:=\abs{G}/t\ge3$.
		By~\cref{thm:duchetmeyniel}, there is an integer $s\ge0$ and a chain of sets $V(G)=V_0\supset V_1\supset\ldots\supset V_s=\emptyset$ such that for every $i\in[s]$,
		$\abs{V_i}\le\frac{1}{2}\abs{V_{i-1}}$
		and $\chi(G[V_{i-1}\setminus V_{i}])<t$;
		in particular $\abs{V_i}\le 2^{-i}\abs{G}=2^{-i}tq$ for all $i\in[s]$.
		Let $r\ge0$ be maximal with $\abs{V_r}\ge t\log q$;
		then $\abs{V_{r+1}}\le t\log q$ and
		$2^{-r}tq\ge \abs{V_r}\ge t\log q$
		which imply $2^r\log q\le q$
		and so $r\le \log_2 q \le 2\log q$
		(note that $q\ge3$).
		Therefore
		\[\chi(G)\le\sum_{i=1}^{r+1}\chi(G[V_{i-1}\setminus V_i])+\chi(G[V_{r+1}])
		\le (r+1)t+t\log q
		\le 3t\log q+t\log q
		=4t\log q.
		\qedhere\]
	\end{proof}
	The proof of~\cref{thm:chi} now follows shortly.
	\begin{proof}
		[Proof of \cref{thm:chi}, assuming \cref{thm:main}]
		Choose $C$ such that $C\ge17C_{\ref{thm:main}}^2$
		and every graph with average degree at least $\frac{1}{4}Ct\log t$ has a $K_t$ minor. Let $\Gamma:=\log(1/\vep)\ge2$, and
		let $G$ be a graph with $\chi(G)\ge Ct\log\Gamma$.
		If $t\le 4\Gamma^2$,
		then $t\le\Gamma^4$; so $\chi(G)\ge\frac{1}{4}Ct\log t$ and $G$ has a $K_t$ minor.
		If $t\ge 4\Gamma^2$,
		then for every integer $a\ge(2\Gamma)^{-1/2}t$ and every $(\vep,a)$-dense-minor-free subgraph $F$ of $G$ with $\abs{F}\le C_{\ref{thm:main}}\Gamma^4a$,
		\begin{itemize}
			\item if $\abs{F}\le 3C_{\ref{thm:main}}a$ then $\frac{1}{a}\chi(F)\le 3C_{\ref{thm:main}}$; and
			
			\item if $\abs{F}\ge 3C_{\ref{thm:main}}a$
			then $\frac{1}{a}\chi(F)\le 4C_{\ref{thm:main}}\log(\Gamma^4)
			=16C_{\ref{thm:main}}\log\Gamma$
			by \cref{cor:main}
			applied to $F$ and $C_{\ref{thm:main}}a$.
		\end{itemize}
		Thus $f_{\ref{thm:main}}(G,\vep,t)
		\le 1+16C_{\ref{thm:main}}\log\Gamma
		\le 17C_{\ref{thm:main}}\log\Gamma$,
		and so $\chi(G)\ge Ct\log\Gamma\ge C_{\ref{thm:main}}t\cdot f_{\ref{thm:main}}(G,\vep,t)$
		which implies, by \cref{thm:main}, that $G$ contains an $(\vep,t)$-dense minor.
		This proves~\cref{thm:chi}.
	\end{proof}
	The rest of this paper is devoted to proving \cref{thm:main}.
	We make no serious attempt to optimize absolute constants throughout the paper.
	\subsection*{Notation}
	For an integer $n\ge0$, let $[n]$ denote $\{1,2,\ldots,n\}$ if $n\ge1$ and $\emptyset$ if $n=0$.
	For a graph $G$ with vertex set $V(G)$ and $E(G)$,
	let $\abs G:=\abs{V(G)}$ and $\edge(G):=\abs{E(G)}$.
	For $v\in V(G)$, let $N_G(v)$ be the set of neighbors of $v$ in $G$,
	and let $d_G(v):=\abs{N_G(v)}$.
	The \emph{average degree} of $G$ is $\frac{1}{\abs{G}}\sum_{v\in V(G)}d_G(v)=\frac{2\edge(G)}{\abs{G}}$,
	and its \emph{edge density} is $\edge(G)/{\abs{G}\choose2}$.
	Let $\delta(G)$ and $\Delta(G)$, respectively, denote the \emph{minimum degree} and \emph{maximum degree} of $G$.
	Let $\overline{G}$ denote the graph with vertex set $V(G)$ and edge set $\{uv:u,v\in V(G),uv\nin E(G)\}$;
	then $\Delta(\overline{G})=\abs{G}-1-\delta(G)$
	is the maximum number of nonneighbors of a vertex of $G$.
	
	Two disjoint subsets $A,B\subset V(G)$ are \emph{anticomplete} in $G$ if $G$ has no edge between them.
	For $S\subset V(G)$, let $G[S]$ denote the subgraph of $G$ induced by $S$;
	and let $G\setminus S:=G[V(G)\setminus S]$.
	A subset $X\subset V(G)$ is a \emph{cutset} of $G$ if there is a partition $A\cup B=V(G)\setminus X$
	with $A,B$ nonempty and anticomplete.
	For~an~integer $k\ge0$,
	$G$ is \emph{$k$-connected} if $\abs{G}>k$ and $G$ has no cutset of size less than $k$;
	and $G$ is \emph{connected} if $\abs{G}=1$ or it is $1$-connected.
	The \emph{connectivity} of $G$, denoted by $\kappa(G)$, is the minimum size of a cutset of $G$.
	
	For an integer $t\ge1$ and a collection $\mac M=\{M_1,\ldots,M_t\}$ of subgraphs of $G$,
	let $\bigcup_{i=1}^tM_i$ be the subgraph of $G$ on vertex set $V(\mac M):=\bigcup_{i=1}^tV(M_i)$
	and edge set $\bigcup_{i=1}^tE(M_i)$. Given a graph $H$ on vertex set $[t]$, $\mac M$ is
	an {\em $H$ model} in $G$ if $M_1,\ldots,M_t$ are vertex-disjoint and for all distinct $i,j\in[t]$, $G$ has an edge between $V(M_i)$ and $V(M_j)$ whenever $ij\in E(H)$;
	and each $M_i$ is called a \emph{fragment} of $\mac M$.
	Thus $G$ has an $H$ minor if and only if it has an $H$ model.
	For a subset $S\subset V(\mac M)$ of size $t$,
	$\mac M$ is \emph{rooted at $S$} if $\abs{S\cap V(M_i)}=1$ for all $i\in[t]$.
	\section{Proof sketch}
	\label{sec:sketch}
	This section summarizes the proof of \cref{thm:main}, which follows the same route as that of~\cite[Theorem 1.6]{delcourt2021reducing}.
	For integer $m\ge0$, a graph $G$ is \emph{$m$-chromatic-separable} if there are vertex-disjoint subgraphs $G_1,G_2$ of $G$ with $\chi(G_1),\chi(G_2)\ge\chi(G)-m$;
	and $G$ is \emph{$m$-chromatic-inseparable} if there do not exist such $G_1,G_2$.
	Our proof of \cref{thm:main} treats
	the chromatic-inseparable case separately via the following theorem.
	\begin{theorem}
		\label{thm:insep}
		There is an integer $C_{\ref{thm:insep}}>0$ such that the following holds.
		Let $\vep\in(0,\frac{1}{256})$, and let $t\ge\sqrt{\log(1/\vep)}$ be an integer.
		For every graph $G$, let
		\[g_{\ref{thm:insep}}(G,\vep,t)
		:=1+\max_{F\subset G}
		\left\{\frac{\chi(F)}{t}:\abs{F}\le C_{\ref{thm:insep}}t\log^4(1/\vep),\,
		\text{$F$ is $(\vep,t)$-dense-minor-free}\right\}.\]
		If $G$ is $C_{\ref{thm:insep}}t\cdot g(G,\vep,t)$-chromatic-inseparable and $\chi(G)\ge C_{\ref{thm:insep}}t\cdot g_{\ref{thm:insep}}(G,\vep,t)$,
		then $G$ contains an $(\vep,t)$-dense minor.
	\end{theorem}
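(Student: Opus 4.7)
The plan is a proof by contradiction along the blueprint of Delcourt--Postle. Suppose $G$ has no $(\vep,t)$-dense minor, write $g:=g_{\ref{thm:insep}}(G,\vep,t)$, and set $m:=C_{\ref{thm:insep}} tg$. The goal is to construct a subgraph $F_0\subseteq G$ with $\abs{F_0}\le C_{\ref{thm:insep}} t\log^4(1/\vep)$ and $\chi(F_0)>(g-1)t$. Any such $F_0$ inherits from $G$ the absence of an $(\vep,t)$-dense minor, hence becomes an admissible witness in the maximum defining $g_{\ref{thm:insep}}(G,\vep,t)$, and the chromatic lower bound on $F_0$ directly violates that maximum, yielding the contradiction.

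To locate $F_0$, I would first use the $m$-chromatic-inseparability to find a subgraph supported on few consecutive BFS layers with chromatic number close to $\chi(G)$. Fix a vertex $v$ and let $L_0,L_1,\ldots,L_d$ be the BFS layers from $v$, with $B_{<i}:=L_0\cup\cdots\cup L_{i-1}$ and $B_{>i}:=L_{i+1}\cup\cdots\cup L_d$. Each pair $(B_{<i},B_{>i})$ is anticomplete in $G$, so $m$-chromatic-inseparability forces $\min(\chi(G[B_{<i}]),\chi(G[B_{>i}]))<\chi(G)-m$ for every $i$. As $i$ sweeps from $0$ to $d$, the ``small'' side must flip from $B_{<i}$ to $B_{>i}$ at some critical index $i^\star$, and a bounded number of further applications of the inseparability (together with the odd/even layer partitioning trick, using that $L_i$ is anticomplete to $L_j$ for $\abs{i-j}\ge2$) localizes a subgraph $F_0$ supported on $O(\log^2(1/\vep))$ consecutive BFS layers around $i^\star$ satisfying $\chi(F_0)\ge\chi(G)-O(m)$.

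The main obstacle is the size bound $\abs{F_0}\le C_{\ref{thm:insep}} t\log^4(1/\vep)$, which is where the hypothesis that $G$ has no $(\vep,t)$-dense minor enters quantitatively. The key input is a Delcourt--Postle-style density-boosting lemma: a graph whose average degree exceeds $Ct\log^2(1/\vep)$ already contains an $(\vep,t)$-dense minor, produced by first extracting a Kostochka--Thomason clique minor of order $\gg t$ and then contracting short connecting paths to raise the edge density of the contracted graph to $1-\vep$. Consequently every subgraph of $G$ has average degree $O(t\log^2(1/\vep))$; passing to a subgraph of minimum degree at least half the average bounds each BFS layer in sight by $O(t\log^2(1/\vep))$ vertices, and summing over the $O(\log^2(1/\vep))$ layers supporting $F_0$ yields the desired $O(t\log^4(1/\vep))$ ceiling on $\abs{F_0}$.

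For $C_{\ref{thm:insep}}$ chosen large relative to the implicit constants, $\chi(F_0)\ge\chi(G)-O(m)\ge(C_{\ref{thm:insep}}-O(1))tg$ comfortably exceeds $(g-1)t$, closing the contradiction. I expect the technical heart to be the density-boosting lemma with the correct $\log^2(1/\vep)$ scaling on the average-degree threshold (rather than, say, a $\sqrt{\log(1/\vep)}$ or $\log(1/\vep)$ dependence that would fail to reproduce the Delcourt--Postle $t\log\log t$ bound in the $\vep=t^{-2}$ regime) and the bookkeeping needed to align the BFS-layer count with exactly $\log^4(1/\vep)$.
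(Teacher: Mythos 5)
Your approach diverges fundamentally from the paper's, and unfortunately I don't think the key localization step can be made to work.

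The paper proves this theorem by an explicit iterative construction (\cref{lem:chi-insep}): over $q\approx\sqrt{\log(1/\vep)}$ rounds, it builds an $(\vep,s)$-dense model with $s=q\ceil{\Gamma^{-1/2}t}$ rooted at a set $U$ together with a highly connected, high-chromatic subgraph $F$ tangent to the model at $U$. Each round uses \cref{cor:smalldense} to extract several small $k$-connected subgraphs, uses wovenness (\cref{lem:woven2,lem:woven}) and the knit property to enlarge the model, and -- this is the crucial point -- uses chromatic-inseparability only at the very end of the round, to argue that the subgraph $G_1$ already tangent to the new model and a freshly-found high-chromatic $k$-connected subgraph $G_2$ must overlap in more than $k$ vertices (else they would be two disjoint high-chromatic subgraphs, contradicting inseparability), so that $G_1\cup G_2$ is the next ``giant.'' Chromatic-inseparability is an \emph{amalgamation} device, not a localization device.

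Your proposal instead tries to derive a contradiction by producing a small high-chromatic $(\vep,t)$-dense-minor-free subgraph $F_0$ that would violate the defining maximum of $g_{\ref{thm:insep}}$. The logic of that contradiction is sound, but the two steps used to produce $F_0$ are both unjustified. First, the BFS sweep: inseparability forces $\min(\chi(G[B_{<i}]),\chi(G[B_{>i}]))<\chi(G)-m$ for each $i$, but this says nothing about the length of the ``gap'' of indices where both sides are small, and when $\chi(G)$ exceeds $2m$ (which the hypotheses allow, since there is no upper bound on $\chi(G)$), the sandwich inequality $\chi(G)\le\chi(G[B_{<i}])+\chi(G[L_i])+\chi(G[B_{>i}])$ gives no lower bound on $\chi(G[L_i])$ at all. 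Nothing in chromatic-inseparability bounds the number of layers supporting the high-chromatic part by $O(\log^2(1/\vep))$, and I see no way to obtain such a bound. Second, even granting a localization to few layers, an average-degree bound does not bound the size of a BFS layer: a graph of bounded average degree can have a single layer as large as you like (nor does passing to a subgraph of minimum degree at least half the average help, since minimum-degree conditions only bound layers from below). So the inequality $\abs{F_0}\le O(t\log^4(1/\vep))$ does not follow. A smaller point: the correct threshold for an $(\vep,t)$-dense minor is average degree $\Theta(t\sqrt{\log(1/\vep)})$ (\cref{thm:denselinearhwg}); your proposed $t\log^2(1/\vep)$ threshold is not what is needed, and the $\log^4(1/\vep)$ in the size bound arises in the paper from the accumulation of cores across $q\approx\sqrt{\log(1/\vep)}$ iterations, not from a layer-count argument.
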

	The $(\vep,t)$-dense minor will be constructed iteratively in the chromatic-inseparable case and
	recursively in the general case
	(using the chromatic-inseparable case as a blackbox).
	These constructions employ similar setups, which involve three vertex-disjoint parts of the graph in consideration:
	\begin{itemize}
		\item the {\em base}, whose role depends on each case (which we shall explain below);
		
		\item the {\em hub}, which is a small dense subgraph with large connectivity; and 
		
		\item the {\em giant}, which is a highly connected subgraph with large chromatic number.
	\end{itemize}
	
	The existence of the hub will be guaranteed by the following result
	(cf.~\cite[Theorem 2.3]{delcourt2021reducing}).
	\begin{theorem}
		\label{thm:smalldense}
		There is an integer $C_{\ref{thm:smalldense}}>0$ such that,
		for every $\vep\in(0,\frac{1}{3})$ and integers $k\ge t\ge2$,
		every graph with average degree at least $C_{\ref{thm:smalldense}}k$ contains either an $(\vep,t)$-dense minor or a $k$-connected subgraph with at most $C_{\ref{thm:smalldense}}^2t\log^3(1/\vep)$ vertices.
	\end{theorem}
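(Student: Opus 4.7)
The plan is to adapt the proof of \cite[Theorem 2.3]{delcourt2021reducing}, which produces either a $K_t$ minor or a $k$-connected subgraph of size $O(t\log^3 t)$ under the same average-degree hypothesis. Here each $\log t$ factor is replaced by $\log(1/\vep)$, reflecting the relaxed requirement of an $(\vep,t)$-dense minor rather than a complete one.

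I would start with a density-increment reduction, passing to a subgraph $H\subseteq G$ with average degree at least $\tfrac{1}{2}C_{\ref{thm:smalldense}}k$ that minimizes $\abs{V(H)}$. By standard Mader-style arguments, $H$ then has minimum degree $\Omega(k)$ and good expansion: any subset $S\subseteq V(H)$ with $\abs{S}\le\abs{V(H)}/2$ sends nontrivially many edges to its complement, since otherwise one of the two sides would be denser than $H$, contradicting minimality.

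Next, I would dichotomize on $\abs{V(H)}$. If $\abs{V(H)}\le C_{\ref{thm:smalldense}}^2 t\log^3(1/\vep)$, then the expansion and minimality force $\kappa(H)\ge k$ (since a cut of size less than $k$ would separate $H$ into two pieces each of which is less dense than $H$ itself), and $H$ serves as the desired $k$-connected subgraph. Otherwise, $\abs{V(H)}$ is much larger, and the strategy is to build an $(\vep,t)$-dense minor inside $H$: choose $t$ well-separated seed vertices, grow disjoint connected fragments $M_1,\ldots,M_t$ of radius $r=O(\log(1/\vep))$ around each (with $\abs{V(M_i)}=O(\log^2(1/\vep))$ by the expansion bound), and verify via a counting or randomized argument that at most $\vep\binom{t}{2}$ pairs of fragments fail to have an edge between them in $H$.

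The main obstacle will be the final edge-density estimate in the second case, where one must exploit the expansion of $H$ to show that any two fragments, being of size $\Omega(\log^2(1/\vep))$ and sufficiently spread out, are connected by an edge with probability at least $1-\vep$ over a suitable randomized choice of seeds. The three logarithmic factors in $\log^3(1/\vep)$ naturally appear as the ball radius $\log(1/\vep)$, the ball size $\log^2(1/\vep)$, and an extra log loss in the density bookkeeping.
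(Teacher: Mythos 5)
Your proposal starts from the right intuition (this is indeed an adaptation of Delcourt--Postle~\cite[Theorem~2.3]{delcourt2021reducing} with $\log t$ replaced by $\log(1/\vep)$), but the specific dichotomy you propose does not work and skips the genuine difficulty of the theorem.

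First, the reduction: the paper first disposes of the case $k>t\sqrt{\log(1/\vep)}$ by \cref{thm:denselinearhwg}, because only then is the average degree $Ck$ high enough to force an $(\vep,t)$-dense minor outright. In the remaining regime $k\le t\sqrt{\log(1/\vep)}$ (e.g.\ $k=t$), the average degree is far below the threshold $\Omega\bigl(t\sqrt{\log(1/\vep)}\bigr)$ required to build a dense minor by ball-growing, so in your ``$\abs{V(H)}$ large'' branch the proposed construction of $t$ fragments of radius $O(\log(1/\vep))$ cannot be expected to succeed: a graph with average degree $Ck$ and $k\ll t\sqrt{\log(1/\vep)}$ simply need not contain an $(\vep,t)$-dense minor, and the theorem is asserting that in that case a \emph{small} $k$-connected subgraph exists instead. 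Your proposal never explains where that small subgraph comes from in this regime.

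Second, the ``$\abs{V(H)}$ small'' branch is also not justified. A vertex-minimal subgraph $H$ with average degree $\ge d$ has minimum degree $>d/2$, but this does \emph{not} force $\kappa(H)\ge k$, and the claim ``a cut of size less than $k$ would separate $H$ into two pieces each of which is less dense than $H$ itself'' does not contradict minimality: both sides can legitimately have average degree below the threshold. The Mader result (\cref{lem:avgkappa}) guarantees \emph{some} $k$-connected subgraph given average degree $\ge 4k$, but that subgraph can be essentially all of $H$; getting one of size $O(t\log^3(1/\vep))$ is the entire content of the theorem, and is achieved in the paper by a quite different route: maximally contracting a family of small connected pieces, isolating ``bad'' vertex classes $X,Y,Z$, controlling them by the asymmetric density theorem (\cref{thm:avgdegbip}) and the increment lemma (\cref{lem:increment}), and then arguing that the neighborhood of one contracted fragment is small (of size $\le C^2\Gamma^3 t$) yet has average degree $\ge 4k$, so \cref{lem:avgkappa} applies \emph{inside this small set}. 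The three $\log(1/\vep)$ factors arise from the contracted-fragment size $\approx\Gamma^2(t/k)^2$ multiplied by the degree cap $\approx\Gamma d$, not from a ball-radius/ball-size/bookkeeping split, so your stated accounting of the $\log^3$ is also not the one that the argument produces.
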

	
	Informally, we would like to link the base and the giant simultaneously to the hub by a collection $\mac P$ of vertex-disjoint paths,
	then group the endpoints of $\mac P$ in the hub suitably and use high connectivity to connect the vertices within each of these groups.
	Let $U$ be the set of endpoints of $\mac P$ in the giant.
	
	In the chromatic-inseparable case, we construct an $(\vep,t)$-dense model after about $\sqrt{\log(1/\vep)}$ iterations.
	In each step, the base consists of the roots of the previously constructed model,
	together with some specified vertices of about $\sqrt{\log(1/\vep)}$ vertex-disjoint small dense subgraphs with similar features as the hub.
	We then do:
	\begin{enumerate}
		\item in each of these small dense subgraphs, use high connectivity to generate a small dense model rooted at the endpoints of $\mac P$;
		
		\item \dd enlarge\ee{} each fragment of the existing model by linking it (via $\mac P$) to an endpoint in $U$
		and several fragments of the small dense models;
		
		\item use the remaining small fragments and vertices in $U$ to form about $\frac{t}{\sqrt{\log(1/\vep)}}$ new fragments and add them to the above enlarged model to get a new dense model;
		
		\item make $U$ the set of new roots;
		
		\item via chromatic-inseparability, \dd attach\ee{} a highly connected subgraph $F$ that contains the giant and has even larger chromatic number to the new model at $U$; then
		
		\item consider $F$ with the new model in the next iteration.
	\end{enumerate}
	
	For the general case, the recursion is based on a ternary tree of depth about $\log\log(1/\vep)$
	whose nodes correspond to the graphs in consideration.
	For each such graph,
	the base consists of the vertices we want to root our $(\vep,a)$-dense model at (for some $a$).
	Let $G'$ be the giant and assume $\abs{U}=a$; we then do:
	\begin{enumerate}
		\item find a subset $V\subset V(G')\setminus U$ of size $2a$
		which has a partition into $a$ pairs each consisting of two neighbors of a vertex of $U$;
		
		\item use \cref{thm:insep} as a black box to create three vertex-disjoint subgraphs of high connectivity and chromatic number in $G'\setminus(U\cup V)$;
		
		\item let these subgraphs be $L_1,L_2,L_3$ and make them the children of our current node in the recursion~tree;
		
		\item appropriately link $V$ to $V(L_1)\cup V(L_2) \cup V(L_3)$ by a collection $\mac Q$ of $2a$ vertex-disjoint paths; then
		
		\item generate an $(\vep,\frac{2}{3}a)$-dense model within each $L_i$ (by recursion) and combine these models (via $\mac Q$) to form an $(\vep,a)$-dense model rooted at $U$ and thus at the designated vertices in the base.
	\end{enumerate}
	
	Except we were cheating slightly here; in fact, combining the three $(\vep,\frac{2}{3}a)$-dense models this way would probably result in an $(\frac{4}{3}\vep,a)$-dense model,
	that is, the nonedge density will increase by a factor~of~$\frac{4}{3}$.
	Therefore it is desirable to keep the depth of the recursion tree about $\log\log(1/\vep)$;
	and after that we should be able to find a small dense model directly within each leaf node of the tree.
	Starting with $a=t$ at the highest node,
	we would expect such a small dense model to have about $\frac{t}{\sqrt{\log(1/\vep)}}$ vertices.
	
	Another technical point that we also cheated in both constructions is that
	the generated small dense models could intersect $\mac P$ (in the chromatic-inseparable case)
	or $\mac Q$ (in the general case) at many vertices.
	Thus it is also desirable to show that given high connectivity,
	it is possible to simultaneously find a dense model and \dd weave\ee{} a given linkage around so that the new linkage only intersects the model at the original endpoints.
	In particular, in the general case, the base should also contain a couple of pairs of vertices to be linked separately so that the construction can be done smoothly.
	
	All of the above issues, together with \cref{thm:smalldense},
	will be resolved as long as we have obtained
	\begin{itemize}
		\item density results for $(\vep,t)$-dense minors in general graphs and unbalanced bipartite graphs; and
		
		\item tools on rooted dense minors and dense wovenness.
	\end{itemize}
	
	These will be addressed in \cref{sec:density,sec:conntool}, respectively;
	and other technical issues will be handled along the way.
	\cref{sec:smalldense} includes a proof of \cref{thm:smalldense}.
	We shall prove \cref{thm:insep} in \cref{sec:insep},
	and prove \cref{thm:main} in \cref{sec:proof}.
	\section{Density results}
	\label{sec:density}
	In this section, we prove two results providing essentially tight bounds on the density of graphs and the asymmetric density of bipartite graphs for $(\vep,t)$-dense minors.
	\begin{theorem}
		\label{thm:denselinearhwg}
		There is an integer $C=C_{\ref{thm:denselinearhwg}}>0$ such that for every $\vep\in(0,\frac{1}{3})$
		and every integer $t\ge2$,
		every graph with average degree at least $Ct\sqrt{\log(1/\vep)}$ contains an $(\vep,t)$-dense minor.
	\end{theorem}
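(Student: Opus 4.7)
At $\vep=t^{-2}$ the hypothesis $d\ge Ct\sqrt{\log(1/\vep)}$ specializes to $d\ge\Omega(t\sqrt{\log t})$, which by Kostochka and Thomason already forces a $K_t$ minor. Consequently the regime $\vep\le t^{-2}$ can be disposed of immediately by quoting Kostochka--Thomason, and the substantive content of the theorem is the regime $\vep\in(t^{-2},\tfrac13)$, in which the average-degree hypothesis is genuinely weaker than the $K_t$ threshold but we are only asked to produce a $(1-\vep)$-dense minor on $t$ vertices.

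In this substantive regime, my plan is to run a Kostochka--Thomason-style argument while carrying $\vep$ as a parameter. First, by Mader's theorem, I would pass to a subgraph $H\subset G$ with $\delta(H)\ge d/2\ge\tfrac{C}{2}t\sqrt{\log(1/\vep)}$, and thereafter work inside $H$. Next, I would construct a family $T_1,\ldots,T_N$ of pairwise vertex-disjoint connected subgraphs of $H$ (``branch fragments''), each of bounded size $s\approx\abs{H}/N$, whose auxiliary adjacency graph---with $T_i\sim T_j$ iff $H$ has an edge between $T_i$ and $T_j$---has edge density at least $1-\vep/2$. The natural tool is a contraction-minimality construction in the spirit of Thomason: iteratively contract connected subgraphs while preserving the average-degree lower bound, stopping only when further contraction is blocked. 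Extremality of the end-product forces the contracted graph to behave quasi-randomly, so that a pair of fragments fails to be adjacent with probability at most $\exp(-\Theta(\delta(H)^2/\abs{H}))$; together with $\abs{H}=O(\delta(H))$ on the contracted graph, this exponent is $\Omega(\log(1/\vep))$ precisely when $\delta(H)\ge\Omega(t\sqrt{\log(1/\vep)})$. A final averaging step then selects $t$ of the fragments with at most $\vep\binom{t}{2}$ non-adjacent pairs, and contracting each selected $T_i$ to a single vertex produces the desired $(\vep,t)$-dense minor.

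The main obstacle is obtaining the square root rather than the full $\log(1/\vep)$. A naive random-branch-set construction (e.g.\ $t$ random BFS balls in $H$) gives only a $\log(1/\vep)$ factor, since each pair of branch sets is non-adjacent with probability $\exp(-\Theta(\delta(H)/t))$. The improvement to $\sqrt{\log(1/\vep)}$ is precisely the substantive content of the Kostochka--Thomason theorem and has to be transmitted through the argument with $\vep$ as a parameter; this is most cleanly done either via the quasi-randomness of a contraction-minimal extremal graph, as sketched above, or by invoking an off-the-shelf $K_s$-minor theorem with $s$ calibrated in $\vep$ and extracting the $(\vep,t)$-dense minor from the resulting $K_s$-minor through a merging/discarding step.
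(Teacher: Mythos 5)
Your high-level plan is essentially the paper's: pass via Mader to a small dense minor $H$ with $\abs{H}=O(\delta(H))$ and constant edge density, then iteratively build branch fragments by random sampling, tuned so that the non-adjacency probability per pair is at most $\vep$. The paper implements this via the Alon--Krivelevich--Sudakov style Lemma~\ref{lem:build}, proving along the way an intermediate reduction (Lemma~\ref{lem:denseminor}) to a minor with minimum degree $\ge d/3$ and connectivity $\ge d/6$.

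Where your sketch has a genuine gap is the quantitative heart. You assert that ``a pair of fragments fails to be adjacent with probability at most $\exp(-\Theta(\delta(H)^2/\abs{H}))$,'' but this exponent does not come out of the branch-set construction you describe, and it is not what the argument actually delivers. The correct balance takes fragments of size $\ell=\Theta(\sqrt{\log(1/\vep)})$, so that the non-adjacency probability is $(1-\Theta(1))^{\Theta(\ell^2)}\le\vep$, and then the constraint that $t$ disjoint fragments of size $O(\ell)$ fit inside $\abs{H}=O(\delta)$ forces $\delta\ge\Omega(t\sqrt{\log(1/\vep)})$. The factor $\ell^2\approx\log(1/\vep)$ arises from a chained two-stage amplification that is the real content of Lemma~\ref{lem:build}: the previous iteration guarantees that the set of vertices with no neighbor in $B_i$ has density at most $\frac{1}{12}\vep^{1/r}$, and then all $r$ new random picks must fall into that small set, giving $(\vep^{1/r})^r\approx\vep$. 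Your $\delta^2/\abs{H}$ exponent is a different (and in general larger) quantity whose appearance you do not justify; it happens to exceed $\log(1/\vep)$ in the regime $t\ge\sqrt{\log(1/\vep)}$ that your case split leaves, but it overclaims the actual probability bound and obscures why the exponent is a square of a carefully chosen fragment parameter.

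The alternative route you float---invoking Kostochka--Thomason off the shelf with ``$s$ calibrated in $\vep$'' and then ``merging/discarding''---does not close the gap. For $\vep$ near $\frac{1}{3}$ and $t$ large, the hypothesis $d\ge Ct\sqrt{\log(1/\vep)}=\Theta(t)$ only yields a $K_s$ minor with $s\approx t/\sqrt{\log t}<t$, and there is no general way to extract a $t$-vertex $(\vep,t)$-dense minor from a $K_s$ minor with $s<t$; the theorem is genuinely stronger than what falls out of calibrating $s$. Also, your opening case split on $\vep\le t^{-2}$ is harmless but unnecessary: the paper's iterative sampling argument handles all $\vep\in(0,\frac13)$ uniformly, with no dichotomy.
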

	\begin{theorem}
		\label{thm:avgdegbip}
		There is an integer $C=C_{\ref{thm:avgdegbip}}>0$
		such that the following holds. 
		For every $\vep\in(0,\frac{1}{3})$, every integer $t\ge2$,
		and every bipartite graph $G$ with bipartition $(A,B)$, if
		\[\edge(G)\ge Ct\sqrt{\log(1/\vep)}\sqrt{\abs{A}\abs{B}}
		+t\abs{G},\]
		then $G$ contains an $(\vep,t)$-dense minor.
	\end{theorem}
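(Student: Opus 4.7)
The plan is to reduce \cref{thm:avgdegbip} to the non-bipartite density result \cref{thm:denselinearhwg} by producing a (no longer bipartite) minor $H$ of $G$ on roughly $\abs{A}$ vertices whose average degree is at least $C_{\ref{thm:denselinearhwg}}t\sqrt{\log(1/\vep)}$. Assume $\abs{A}\le\abs{B}$. Iteratively deleting every vertex of degree less than $t$ removes at most $t\abs{G}$ edges in total, which is absorbed by the $+t\abs G$ slack in the hypothesis; call the resulting subgraph $G'$ with bipartition $(A',B')$, so that $\delta(G')\ge t$ and $\edge(G')\ge Ct\sqrt{\log(1/\vep)}\sqrt{\abs{A'}\abs{B'}}$.

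Independently for each $b\in B'$ sample a uniformly random neighbor $\pi(b)\in N_{G'}(b)$, and simultaneously contract the connected star $\{a\}\cup\pi^{-1}(a)$ for every $a\in A'$. This produces a minor $H$ of $G'$ whose vertex set is identified with $A'$. Since $G'$ is bipartite, two super-vertices $a,a'\in A'$ are adjacent in $H$ iff some $b\in N_{G'}(a)\cap N_{G'}(a')$ satisfies $\pi(b)\in\{a,a'\}$; a direct calculation combined with the elementary inequality $1-\prod_i(1-2x_i)\ge\tfrac12\min(1,\sum_ix_i)$ gives
\[\Pr\bigl[\{a,a'\}\in E(H)\bigr]\ge\tfrac12\min\Bigl(1,\sum_{b\in N_{G'}(a)\cap N_{G'}(a')}\tfrac{1}{d_{G'}(b)}\Bigr).\]
Writing $f(a,a')$ for the inner sum, summing over pairs, and swapping the order of summation yields
\[\sum_{\{a,a'\}\subset A'}f(a,a')=\sum_{b\in B'}\frac{1}{d_{G'}(b)}\binom{d_{G'}(b)}{2}=\frac{\edge(G')-\abs{B'}}{2},\]
so $\mathbb{E}[\edge(H)]\ge\tfrac14\sum_{\{a,a'\}\subset A'}\min(1,f(a,a'))$.

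To finish, one shows that after pruning the few vertices of $A'$ of unusually high degree (losing only a constant fraction of the edges), the minimum-degree bound $1/d_{G'}(b)\le 1/t$ forces the $\min(1,\cdot)$-cap to cost at most a constant factor, whence $\mathbb{E}[\edge(H)]\ge c\edge(G')$ for some absolute constant $c>0$. Some realization of $H$ therefore has average degree at least $c\edge(G')/\abs{A'}\ge cCt\sqrt{\log(1/\vep)}\sqrt{\abs{B'}/\abs{A'}}\ge cCt\sqrt{\log(1/\vep)}$ (using $\abs{A'}\le\abs{B'}$), and choosing $C$ with $cC\ge C_{\ref{thm:denselinearhwg}}$ and invoking \cref{thm:denselinearhwg} on $H$ produces the desired $(\vep,t)$-dense minor of $G$. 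The main obstacle is the estimate $\mathbb{E}[\edge(H)]\ge c\edge(G')$: when the bipartition is extremely unbalanced, a few pairs of $A'$-vertices with abnormally many common neighbors can dominate the sum and force the cap $\min(1,\cdot)$ to collapse most of its weight; this case is handled by iterating the random collapse, each round halving $\abs{B'}/\abs{A'}$, until the two sides are within a constant factor, at which point the above estimate applies.
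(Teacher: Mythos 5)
The plan — randomly collapse the larger side of the bipartition onto the smaller side and apply \cref{thm:denselinearhwg} to the resulting minor — runs into two genuine obstacles, one of which you acknowledge, but the proposed fix does not resolve either of them.

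First, the estimate $\mathbb{E}[\edge(H)]\ge c\cdot\edge(G')$ cannot hold in general, and not only because of the $\min(1,\cdot)$ cap losing constant factors. The minor $H$ has $\abs{A'}$ vertices, so $\edge(H)\le\binom{\abs{A'}}{2}$, while the hypothesis only guarantees $\edge(G')\gtrsim t\sqrt{\Gamma}\sqrt{\abs{A'}\abs{B'}}$, which for very unbalanced parts can vastly exceed $\binom{\abs{A'}}{2}$. In fact the hypothesis is perfectly compatible with $\abs{A'}$ being as small as roughly $t$ (forced by the $+t\abs{G}$ term together with $\delta(G')\ge t$), in which case $H$ is a graph on $O(t)$ vertices and cannot possibly have average degree $\approx t\sqrt{\Gamma}$, so \cref{thm:denselinearhwg} gives nothing. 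Your ``iterate the random collapse'' fix is not a well-defined operation here: after the first collapse $H$ is no longer bipartite, and there is no mechanism that halves $\abs{B'}/\abs{A'}$ while staying in the bipartite setting, so the base case for the iteration is never reached. Similarly, the suggested pruning of high-degree $A'$-vertices does not control the pairs causing the capping problem: two vertices of moderate degree in $A'$ can still share far more than $t$ common neighbors, forcing $\min(1,f(a,a'))=1$ while $f(a,a')$ is enormous.

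The paper's proof circumvents both issues with a different structure. Rather than collapsing onto all of the smaller side, it works with a minimal counterexample to extract a single vertex $u_0$ of small degree $<C_{\ref{thm:denselinearhwg}}d$ and to show (again from minimality) that \emph{every} pair of neighbors of $u_0$ has at least $s\approx\frac{1}{4}Cdp$ common neighbors. It then randomly collapses onto a set $S\subset N_G(u_0)$ of a \emph{controlled} size $n=\ceil{\frac{1}{2}Cdp^{-1}+t}$; the uniform lower bound on common neighbors makes every nonedge probability at most $e^{-2s/n}$, so the cap issue never arises. Finally, because $n$ can be as small as $\Theta(t)$, the resulting small graph cannot be fed into \cref{thm:denselinearhwg}; the paper instead invokes the more refined size-versus-density criterion \cref{lem:avgdegbip1}, which is tailored exactly for producing an $(\vep,t)$-dense minor from a small graph with very small nonedge density. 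Your proposal misses both the uniform common-neighbor bound that makes the probability computation work and the need for \cref{lem:avgdegbip1} (rather than \cref{thm:denselinearhwg}) when the surviving side is small.
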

	\cref{thm:denselinearhwg,thm:avgdegbip}, respectively, are extensions of the Kostochka--Thomason theorem and the asymmetric density theorem developed by Norin and Postle~\cite[Theorem 3.2]{np2021}.
	By random graph considerations mirroring~\cite{MR593989}, the $t\sqrt{\log(1/\vep)}$ term in these theorems can be seen to be tight up to a constant factor.
	The $t\abs{G}$ term in \cref{thm:avgdegbip} is also necessary:
	indeed, let $\abs{A}=a$ for any $a\ge1$ and $\abs{B}=\floor{(1-\sqrt{2\vep})t}$;
	then for every collection of disjoint nonempty connected subsets $\{B_1,\ldots,B_t\}$ of $G$,
	there are at least $t\sqrt{2\vep}$ sets lying entirely within $A$ and thus are pairwise anticomplete, which implies that the corresponding minor has at least ${t\sqrt{2\vep}\choose2}>\vep{t\choose2}$ nonedges for all sufficiently large $t$ (as a function of $\vep$).
	In fact this $t\abs{G}$ term can be improved slightly to $(t-2)\abs{G}$, but we do not need it for our purposes.
	
	We remark that even though \cref{thm:avgdegbip} implies \cref{thm:denselinearhwg},
	the proof of the former makes use of the latter;
	so it is necessary to prove \cref{thm:denselinearhwg} in the first place.
	We need the following lemma,
	which is inspired by the recent idea of Alon, Krivelevich, and Sudakov~\cite{aks2022} for the Kostochka--Thomason bound.
	\begin{lemma}
		\label{lem:build}
		Let $\vep>0$, let $k\ge0$ and $n,r\ge1$ be integers,
		and let $G$ be a graph with $n/6\le \abs{G}\le n$ and 
		\[24^r\left(\frac{\Delta(\overline{G})}{\abs{G}-1}\right)^{r^2}\le\vep.\]
		Then for $A_1,\ldots,A_k\subset V(G)$ with $\abs{A_i}\le \frac{1}{12}\vep^{1/r}n$ for all $i\in[k]$, there exists $S\subset V(G)$ with
		\begin{itemize}
			\item $\abs{S}\le r$;
			
			\item $S\subset A_i$ for at most $\vep k$ indices $i\in[k]$; and
			
			\item at most $\frac{1}{12}\vep^{1/r}n$ vertices in $V(G)\setminus S$ have no neighbors in $S$.
		\end{itemize}
	\end{lemma}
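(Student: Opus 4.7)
The plan is to sample $S$ randomly and apply Markov's inequality together with a union bound. Set $p := \Delta(\overline{G})/(\abs{G}-1)$, so that the hypothesis becomes $p^{r^2} \le \vep/24^r$, or equivalently $p^r \le \vep^{1/r}/24$. Pick $s_1,\ldots,s_r$ independently and uniformly at random from $V(G)$ and let $S := \{s_1,\ldots,s_r\}$; then $\abs{S}\le r$ automatically.

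Next I would bound two quantities in expectation. For each $i\in[k]$, independence gives
\[\Pr[S\subset A_i]=(\abs{A_i}/\abs{G})^r \le (\vep^{1/r}/2)^r = \vep/2^r,\]
using $\abs{A_i}\le\vep^{1/r}n/12$ and $\abs{G}\ge n/6$; summing, the expected number of indices $i$ with $S\subset A_i$ is at most $k\vep/2^r$. For each $v\in V(G)$, the probability that $v\nin S$ and $v$ has no neighbor in $S$ equals $(d_{\overline{G}}(v)/\abs{G})^r \le p^r \le \vep^{1/r}/24$, so the expected number of such undominated vertices is at most $n\vep^{1/r}/24$. Markov's inequality then bounds the two \dd bad\ee{} events by $1/2^r$ and $1/2$ respectively, and a union bound gives failure probability at most $1/2^r+1/2<1$ whenever $r\ge2$, producing the desired $S$.

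The only subtlety is the edge case $r=1$, where the above union bound is exactly tight. Here I would instead argue deterministically: for any singleton $S=\{s\}$, the number of non-neighbors of $s$ is at most $\Delta(\overline{G}) \le pn \le \vep n/24 < \vep n/12$, so the third condition is automatic, and it suffices to pick $s$ attaining at most the average value $\sum_i\abs{A_i}/\abs{G}\le k\vep/2\le\vep k$ of $\abs{\{i:s\in A_i\}}$. I do not anticipate serious obstacles beyond this constant bookkeeping---the substantive content of the hypothesis is the exponent $r^2$, which is precisely what converts $p^r$ into $\vep^{1/r}/24$ and supplies the spare factor $1/2$ needed after Markov.
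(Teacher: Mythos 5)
Your proposal is correct and follows essentially the same route as the paper's proof: sample $r$ vertices independently and uniformly from $V(G)$, bound the expected number of indices $i$ with $S\subset A_i$ and the expected number of undominated vertices, apply Markov's inequality to each, and take a union bound. The only cosmetic divergence is in the bookkeeping: the paper bounds the expected number of bad indices by $\tfrac{1}{2}\vep k$ (using $(\tfrac{1}{2})^r\le\tfrac{1}{2}$) and asserts both Markov events hold ``with probability more than $1/2$'' --- the strictness being implicitly justified because the two counts are integer-valued --- whereas you retain the sharper $2^{-r}$, get failure probabilities $2^{-r}$ and $\tfrac{1}{2}$, and then must (and correctly do) dispatch the degenerate $r=1$ case by a direct deterministic argument. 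Both resolutions are fine; your explicit treatment of $r=1$ is, if anything, a bit more scrupulous than the paper's phrasing.
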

	\begin{proof}
		Let $S$ be a random set of $r$ vertices in $G$ chosen uniformly at random with repetitions.
		For each $i\in[k]$, since $\abs{G}\ge n/6$, the probability that $S\subset A_i$ is at most
		\[\left(\frac{\abs{A_i}}{\abs{G}}\right)^r
		\le \left(\frac{1}{2}\vep^{1/r}\right)^r
		\le\frac{1}{2}\vep,\]
		and so
		the expected number of indices $i\in[k]$ with $S\subset A_i$ is at most $\frac{1}{2}\vep k$.
		Thus by Markov's inequality,
		with probability more than $1/2$,
		$S\subset A_i$ for at most $\vep k$ indices $i\in[k]$.
		
		For each $v\in V(G)$, conditioned on $v\nin S$, the probability that $v$ has no neighbors in $S$ is at most 
		\[\left(\frac{\Delta(\overline{G})}{\abs{G}-1}\right)^r
		\le\frac{1}{24}\vep^{1/r}.\]
		Thus, since $\abs{G}\le n$, the expected number of vertices in $V(G)\setminus S$ with no neighbors in $S$ is at most $\frac{1}{24}\vep^{1/r}n$;
		hence with probability more than $1/2$,
		at most $\frac{1}{12}\vep^{1/r}n$ vertices in $V(G)\setminus S$ have no neighbors in $S$.
		Consequently there is a choice of $S$ with the desired properties.
		This proves~\cref{lem:build}.
	\end{proof}
	We also need the following result of Mader~\cite{MR229550}
	which was mentioned in the Introduction.
	\begin{lemma}
		\label{lem:mader}
		For every integer $d\ge2$, every graph with average degree at least $d-1$ contains a minor with at most $d$ vertices and minimum degree at least $d/2$.
	\end{lemma}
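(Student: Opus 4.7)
The plan is to take $H$ to be a minor of $G$ with $\abs{H}$ minimum subject to $\edge(H)\ge(d-1)\abs{H}/2$, and—breaking ties—$\edge(H)$ minimum. Such $H$ exists because $G$ itself satisfies the density condition. I will then exhibit the desired minor as the subgraph $M:=H[\{v\}\cup N_H(v)]$ for a vertex $v\in V(H)$ of minimum degree.

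First I will extract three facts from the extremality of $H$. The primary minimality of $\abs{H}$ applied to $H-v$ (a smaller minor of $G$ that must therefore fail the density condition) forces $d_H(v)>(d-1)/2$, hence $d_H(v)\ge\lceil d/2\rceil$ for every $v\in V(H)$. Applied to the contracted minor $H/uv$ it forces $\abs{N_H(u)\cap N_H(v)}>(d-3)/2$, hence $\abs{N_H(u)\cap N_H(v)}\ge\lceil d/2\rceil-1$ for every edge $uv\in E(H)$. Finally, the secondary minimality of $\edge(H)$ via edge deletion pins $\edge(H)=\lceil(d-1)\abs{H}/2\rceil$; consequently the average degree of $H$ is strictly less than $d$, and therefore $\delta(H)\le d-1$.

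Then I will pick $v\in V(H)$ with $d_H(v)=\delta(H)$ and verify that $M:=H[\{v\}\cup N_H(v)]$ has the right parameters: it has $\delta(H)+1\le d$ vertices; the vertex $v$ has degree $\delta(H)\ge\lceil d/2\rceil$ in $M$; and each $u\in N_H(v)$ has degree $1+\abs{N_H(u)\cap N_H(v)}\ge\lceil d/2\rceil$ in $M$. Hence $\delta(M)\ge\lceil d/2\rceil\ge d/2$, and $M$—being a subgraph of $H$ and hence a minor of $G$—is the required minor.

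I do not foresee a serious obstacle. The step most worth double-checking is the triangle-type bound $\abs{N_H(u)\cap N_H(v)}\ge\lceil d/2\rceil-1$, since this is exactly what guarantees that each neighbor $u$ of $v$ retains enough neighbors inside $\{v\}\cup N_H(v)$ to give $\delta(M)\ge d/2$; without this, the closed-neighborhood construction would not in general be dense enough.
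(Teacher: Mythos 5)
The paper does not actually prove this lemma; it cites it as a known result of Mader~\cite{MR229550} and uses it as a black box. Your proof is nevertheless correct and is essentially the standard textbook argument: take a minor $H$ of $G$ that is edge-minimal and then vertex-minimal subject to the density condition $\edge(H)\ge\frac{d-1}{2}\abs{H}$, derive from vertex deletion that $\delta(H)\ge\lceil d/2\rceil$, from edge contraction that every edge lies in at least $\lceil d/2\rceil-1$ triangles, and from edge deletion that $\edge(H)=\lceil\frac{d-1}{2}\abs{H}\rceil$ so that $\delta(H)\le d-1$; the closed neighborhood of a minimum-degree vertex then has at most $d$ vertices with minimum degree at least $\lceil d/2\rceil$. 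I verified the integrality bookkeeping (the inequalities $d_H(v)>\frac{d-1}{2}$ and $\abs{N_H(u)\cap N_H(v)}>\frac{d-3}{2}$ do round up to $\lceil d/2\rceil$ and $\lceil d/2\rceil-1$ respectively for both parities of $d$) and the degenerate cases $\abs{H}\le2$, and everything checks out. One micro-remark: to conclude that the average degree of $H$ is strictly below $d$ from $\edge(H)=\lceil\frac{d-1}{2}\abs{H}\rceil$ one should note $\abs{H}\ge2$ (which follows since $\edge(H)\ge1$), so that the possible $+\frac{1}{2}$ from the ceiling contributes less than $1$ to the average degree; you implicitly use this but it is worth flagging.
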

	The next result implicitly appears in~\cite{aks2022}.
	\begin{lemma}
		\label{lem:denseminor}
		For every integer $d\ge2$, every graph with average degree at least $d$ 
		contains a minor with at most $d$ vertices, minimum degree at least $d/3$, and connectivity at least $d/6$.
	\end{lemma}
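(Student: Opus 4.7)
The starting point is Lemma~\ref{lem:mader}: applied with parameter $d$, it produces a minor $H_0$ of $G$ with $\abs{H_0}\le d$ and $\delta(H_0)\ge d/2\ge d/3$. Let $\mac H$ be the (nonempty, since $H_0\in\mac H$) family of all minors $H$ of $G$ with $\abs{H}\le d$ and $\delta(H)\ge d/3$, and let $H\in\mac H$ minimize $\abs{V(H)}$. Then the size bound $\abs{H}\le d$ and the minimum-degree bound $\delta(H)\ge d/3$ are immediate, so all that remains is to verify $\kappa(H)\ge d/6$.

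I would argue the connectivity bound by contradiction. Suppose $\kappa(H)<d/6$; then $H$ admits a cutset $X$ with $\abs{X}\le\lceil d/6\rceil-1$. Note first that $H$ must be connected, since otherwise any connected component would itself be a member of $\mac H$ with strictly fewer vertices than $H$, contradicting minimality. Hence $V(H)\setminus X$ splits as $A\cup B$ with $A,B$ nonempty and anticomplete, and I take $A$ to be a smallest connected component of $H\setminus X$. Since every vertex of $A$ has all $\ge d/3$ of its $H$-neighbors in $A\cup X$, one obtains $\abs{A}>d/3-\abs{X}>d/6$, and also $\abs{A}\le(\abs{V(H)}-\abs{X})/2$. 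The goal is now to construct a minor $H'$ of $H$ with $H'\in\mac H$ and $\abs{V(H')}<\abs{V(H)}$, contradicting the choice of $H$.

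The construction would be: delete the anticomplete side $B$ and contract each $v\in X$ with a distinct $A$-neighbor. More precisely, find a matching $M$ in the bipartite subgraph of $H$ between $X$ and $A$ that saturates $X$ (this uses $\abs{A}>\abs{X}$ together with the fact that each $v\in X$, being part of a minimum cutset, has many $A$-neighbors, so Hall's condition holds on $X$, possibly after absorbing into $B$ those $X$-vertices with too few $A$-neighbors and exploiting the symmetry between $A$ and $B$). After deleting $B$ and contracting each matched pair into a single super-vertex, the resulting graph $H'$ has $\abs{V(H')}=\abs{A}<\abs{V(H)}$: each unmatched $a\in A$ retains all its original neighbors in $A\cup X$ (which are present in $H'$ after absorption of $X$), so its degree remains at least $d/3$; each matched super-vertex $w=\{a,x\}$ inherits the combined neighborhood $(N_H(a)\cup N_H(x))\setminus\{a,x\}$ restricted to $A\cup X$, whose size can be shown to be at least $d/3$ using $d_H(a)\ge d/3$ and the matched edge $ax\in E(H)$.

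\textbf{Main obstacle.} The delicate step is the degree lower bound at matched super-vertices: if $a$ and $x$ share many common neighbors in $A\cup X$, or many of $x$'s neighbors lie in $B$ and are lost by the deletion, the new degree can drop below $d/3$. Handling this requires choosing the matching to control the total overlap (e.g.\ by greedy or minimum-weight matching against a suitable cost), and carefully using $\abs{X}<d/6$ to bound the aggregate degree loss from $B$-deletions and from collisions among the matched pairs; the room between the two thresholds $d/3$ and $d/6$ is what makes this feasible. A secondary obstacle is justifying the existence of the saturating matching via Hall's condition when $X$ is a general cutset, which requires preprocessing $X$ so that every residual $X$-vertex truly has at least the needed number of $A$-neighbors.
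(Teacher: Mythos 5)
Your proposal deviates from the paper's proof and, as you partly suspect in the ``main obstacle'' paragraph, it has a genuine gap that I don't think can be patched in the form you set up. The issue is that the minimizer argument over $\mac H=\{\text{minors with }\le d\text{ vertices and }\delta\ge d/3\}$ cannot close: after deleting $B$ and contracting a matching from $X$ into $A$, the resulting graph $H'$ does \emph{not} have minimum degree $\ge d/3$. An unmatched $a\in A$ has all $\ge d/3$ of its $H$-neighbors inside $A\cup X$, but in $H'$ any two of those neighbors that form a matched pair merge into one vertex, so $d_{H'}(a)\ge d_H(a)-\abs{X}$, which is only guaranteed to exceed $d/3-d/6=d/6$. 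The same bottleneck hits the matched super-vertices $\{a,x\}$: you can certify $\ge d/3-1$ distinct vertices in $A\cup X$ that are $H$-neighbors, but after identifications you again only retain $> d/6$ of them. So $H'\notin\mac H$, no contradiction is produced, and switching to the stricter family $\delta\ge d/2$ doesn't help either, since then $H'$ only achieves $\delta>d/3$, which is still below the membership threshold.

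The paper's proof is shorter and avoids any iteration or matching. From Mader you get a minor $H$ with $\abs{H}\le d$ and $\delta(H)\ge d/2$ (not $d/3$ --- that stronger bound is what makes the argument work). If $\kappa(H)<d/6$, let $X$ be a cutset with $\abs{X}<d/6$ and let $A$ be the smaller side, so $\abs{A}\le\abs{H}/2\le d/2$. Every $v\in A$ then has more than $d/2-d/6=d/3$ neighbors inside $A$, which already gives the minimum-degree bound for $H[A]$. The connectivity then follows in one line: since $\abs{A}\le d/2$ and each vertex has $>d/3$ neighbors in $A$, any two vertices of $A$ have more than $2\cdot(d/3)-d/2=d/6$ common neighbors in $A$, so $\kappa(H[A])>d/6$. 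Note there is no minimality, no deletion of $B$ followed by repair, and no matching --- the slack between $d/2$ and $d/3$, together with $\abs{A}\le d/2$, is all that is needed.
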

	\begin{proof}
		Let $G$ be a graph with average degree at least $d$. 
		\cref{lem:mader} gives a minor $H$ of $G$ with $\abs{H}\le d$ and $\delta(H)\ge d/2$.
		We shall find a subgraph $H_0$ of $H$ with $\delta(H_0)\ge d/3$ and $\kappa(H_0)\ge d/6$.
		Indeed, if $\kappa(H)\ge d/6$ then we can let $H_0:=H$;
		so we may assume $\kappa(H)<d/6$.
		Let $X$ be a cutset of $H$ with $\abs{X}<d/6$,
		and let $A\cup B$ be a partition of $V(H)\setminus X$ with $A,B$ nonempty and anticomplete in~$H$.
		By symmetry we may assume $\abs{A}\le\abs{H}/2\le d/2$.
		Since $\delta(H)\ge d/2$ and $\abs{X}<d/6$,
		each vertex in $A$ has more than $d/3$ neighbors in $A$.
		Thus, since $\abs{A}\le d/2$, every two vertices in $A$ have more than $2d/3-d/2=d/6$ common neighbors in $A$;
		so $\kappa(H[A])>d/6$ and we let $H_0:=H[A]$.
		This proves~\cref{lem:denseminor}.
	\end{proof}
	We are now ready to prove~\cref{thm:denselinearhwg}.
	\begin{proof}
		[Proof of~\cref{thm:denselinearhwg}]
		Let $C:=3360$.
		Let $d:=\ceil{Ct\sqrt{\log(1/\vep)}}$ and $r:=\ceil{20\sqrt{\log(1/\vep)}}$;
		then $d\ge 84t\cdot 40\sqrt{\log(1/\vep)}
		\ge 84rt$.
		Let $G$ be a graph with average degree at least $d$.
		By~\cref{lem:denseminor},
		$G$ has a minor $H$ with $\abs{H}\le d$,
		$\delta(H)\ge d/3$,
		and $\kappa(H)\ge d/6$.
		We shall build an $(\vep,t)$-dense minor in $H$ by the following claim.
		\begin{claim}
			\label{claim:minor}
			There exist disjoint nonempty subsets $B_1,\ldots,B_t$ of $ V(H)$
			such that for each $i\in[t]$,
			$H[B_i]$ is connected
			and $B_i$ is anticomplete to at most $\vep(i-1)$ sets among $B_1,\ldots,B_{i-1}$ in $H$.
		\end{claim}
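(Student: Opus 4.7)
\emph{Proof proposal.} I would build $B_1,\ldots,B_t$ inductively. The construction uses \cref{lem:build} to pick a small set $S_j$ at step $j$ and then extends $S_j$ to a connected $B_j$. The inductive invariants are: $\abs{B_j}\le C'r$ for some absolute constant $C'$; $H[B_j]$ is connected; and at most $\frac{1}{12}\vep^{1/r}d$ vertices of $V(H)\setminus(B_1\cup\cdots\cup B_j)$ have no neighbor in $S_j$ (hence none in $B_j$).

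At step $i$, let $W:=B_1\cup\cdots\cup B_{i-1}$ and $H':=H\setminus W$. The choices $r=\lceil 20\sqrt{\log(1/\vep)}\rceil$ and $d\ge 3360\, t\sqrt{\log(1/\vep)}$ give enough slack that $\abs{W}\le C'rt$ is small compared to $\kappa(H)\ge d/6$ and $\delta(H)\ge d/3$, so with $n:=d$ we have $n/6\le\abs{H'}\le n$, $\delta(H')\ge d/4$, and $H'$ connected. This yields $\Delta(\overline{H'})/(\abs{H'}-1)\le 3/4$, and the numerical condition $24^r(\Delta(\overline{H'})/(\abs{H'}-1))^{r^2}\le\vep$ of \cref{lem:build} then follows from the gap between $r\log 24\sim\sqrt{\log(1/\vep)}$ and $r^2\log(4/3)\sim\log(1/\vep)$. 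For each $j<i$, let $A_j:=\{v\in V(H'):v\text{ has no neighbor in }B_j\}$; since $V(H')\subset V(H)\setminus(B_1\cup\cdots\cup B_j)$ and non-neighbors of $B_j$ are non-neighbors of $S_j$, the inductive invariant gives $\abs{A_j}\le\frac{1}{12}\vep^{1/r}n$. Applying \cref{lem:build} to $H'$ with these $A_j$'s then produces $S_i\subset V(H')$ with $\abs{S_i}\le r$, $S_i\subset A_j$ for at most $\vep(i-1)$ indices $j$, and at most $\frac{1}{12}\vep^{1/r}n$ vertices of $V(H')\setminus S_i$ with no neighbor in $S_i$.

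To finish step $i$, I extend $S_i$ to a connected $B_i$ by a Steiner tree in $H'$: the bounds $\delta(H')\ge d/4$ and $\abs{H'}\le d$ force $\operatorname{diam}(H')$ to be at most an absolute constant (by a standard $3n/\delta$-type estimate), so joining the $\le r$ vertices of $S_i$ via shortest paths yields a subtree with $\le C'r$ vertices, which I take as $B_i$. The invariants then persist: $H[B_i]$ is connected by construction; if $B_i$ is anticomplete to $B_j$ then so is $S_i\subset B_i$, i.e.\ $S_i\subset A_j$, bounding the anticompleteness count by $\vep(i-1)$; and the non-neighbors bound transfers via $V(H)\setminus(B_1\cup\cdots\cup B_i)=V(H')\setminus B_i\subset V(H')\setminus S_i$. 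The main obstacle is the quantitative bookkeeping: verifying $24^r(\Delta(\overline{H'})/(\abs{H'}-1))^{r^2}\le\vep$ at each step, and calibrating $C'$ to balance $C'rt\ll d$ against $C'r$ being large enough to absorb the diameter of $H'$; both are routine under the chosen constants.
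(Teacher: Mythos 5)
Your proposal is correct and takes essentially the same approach as the paper: both iterate \cref{lem:build} on $F := H\setminus(B_1\cup\cdots\cup B_{i-1})$ with $n=d$, check its hypotheses from $\delta(H)\ge d/3$, $\kappa(H)\ge d/6$, and the smallness of what has been removed, and then extend $S_i$ to a connected $B_i$ of size $O(r)$ via short paths. The only differences are cosmetic: the paper bounds the lengths of induced paths between vertices of $S_i$ directly rather than quoting a diameter bound, and its minimum-degree constant for $F$ is $d/6$ rather than your $d/4$ (the latter would need a slightly larger $C$ in \cref{thm:denselinearhwg}, which is one of the calibration points you flag as routine).
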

		\begin{subproof}
			Put $\gamma:=\frac{1}{12}\vep^{1/r}$.
			Let $k$ be an integer with $0\le k<t$; and assume we have constructed
			disjoint subsets $B_1,\ldots,B_k$ of $ V(H)$ such that 
			for each $i\in[k]$,
			\begin{itemize}
				\item $\abs{B_i}\le 14r$;
				
				\item $H[B_i]$ is connected;
				
				\item $B_i$ is anticomplete to at most $\vep(i-1)$ sets among $B_1,\ldots,B_{i-1}$ in $H$; and
				
				\item at most $\gamma d$ vertices in $V(H)\setminus(B_1\cup\cdots\cup B_{i-1})$ have no neighbors in $B_i$.
			\end{itemize} 
			
			Let $F:=H\setminus(B_1\cup\cdots\cup B_k)$.
			Since $\abs{B_1}+\cdots+\abs{B_k}
			\le 14rk<14rt\le d/6$, we have
			$\delta(F)>\delta(H)-d/6\ge d/6$ and $\kappa(F)> \kappa(H)-d/6\ge 0$;
			thus $d/6\le\abs{F}\le d$, $F$ is connected, and by the choice of $r$,
			\[24^r\left(\frac{\Delta(\overline{F})}{\abs{F}-1}\right)^{r^2}
			=24^r\left(1-\frac{\delta(F)}{\abs{F}-1}\right)^{r^2}
			\le 24^r\left(\frac{5}{6}\right)^{r^2}
			\le\left(\frac{5}{6}\right)^{r^2/10}
			\le \vep.\]
			
			For $i\in[k]$,
			let $A_i$ be the set of vertices in $V(F)$ with no neighbors in $B_i$;
			then $\abs{A_i}\le \gamma d$.
			By~\cref{lem:build} applied to $F$ and $A_1,\ldots,A_k$, there exists $S\subset V(F)$ such that
			\begin{itemize}
				\item $\abs{S}\le r$;
				
				\item $S\subset A_i$ for at most $\vep k$ indices $i\in[k]$,
				which means that $S$ is anticomplete to at most $\vep k$ sets among $B_1,\ldots,B_k$ in $H$; and
				
				\item at most $\gamma d$ vertices in $V(F)\setminus S$ have no neighbors in $S$.
			\end{itemize}
			
			Since $F$ is connected, every two vertices in $S$ is joined by some induced path in $V(F)$;
			and since $\delta(F)\ge d/6$,
			such a path have length at most $14$,
			because every two vertices of distance at least three on it should have disjoint neighborhoods in $F$.
			So we can add at most $13\abs{S}$ vertices from $V(F)\setminus S$ to $S$ to get a set $B_{k+1}\subset V(F)$ with $\abs{B_{k+1}}\le 14\abs{S}\le 14r$ such that $F[B_{k+1}]=H[B_{k+1}]$ is connected.
			Hence,
			\begin{itemize}
				\item $\abs{B_{k+1}}\le 14r$;
				
				\item $H[B_{k+1}]$ is connected;
				
				\item $B_{k+1}$ is anticomplete to at most $\vep k$ sets among $B_1,\ldots,B_k$ in $H$; and
				
				\item at most $\gamma d$ vertices in $V(F)\setminus B_{k+1}=V(H)\setminus (B_1\cup\cdots\cup B_{k+1})$ have no neighbors in $B_{k+1}$.
			\end{itemize}
			
			Iterating this for $k=0,1,\ldots,t-1$ in turn proves~\cref{claim:minor}.
		\end{subproof}
		Now, contracting each of $B_1,\ldots,B_t$ in $H$
		gives a minor of $H$ (and so of $G$)
		with $t$ vertices and at least $ (1-\vep)(1+2+\cdots+(t-1))=(1-\vep){t\choose2}$ edges.
		This proves~\cref{thm:denselinearhwg}.
	\end{proof}
	\cref{thm:denselinearhwg} gives an asymptotically optimal criterion for general graphs containing a dense minor.
	Here is a related result (cf.~\cite[Lemma 3.1]{np2021}) with a similar proof.
	\begin{lemma}
		\label{lem:avgdegbip1}
		Let $\vep\in(0,1)$, and let $n,t\ge2$ be integers with $n\ge12t$.
		Let $G$ be a graph with $\abs{G}=n$.
		Put $r:=\floor{\frac{n}{12t}}$ and $q:=\edge(\overline{G})/{n\choose2}$.
		If $24^r(12q)^{r^2}\le\vep$,
		then $G$ contains an $(\vep,t)$-dense minor.
	\end{lemma}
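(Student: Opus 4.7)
The plan is to follow the iterative strategy from the proof of \cref{thm:denselinearhwg}, where repeated applications of \cref{lem:build} produce $t$ vertex-disjoint connected fragments $B_1,\ldots,B_t$ whose contraction yields the desired $(\vep,t)$-dense minor. The only obstacle to doing this directly is that \cref{lem:build} controls the pointwise quantity $\Delta(\overline G)/(\abs G-1)$, whereas the present lemma gives only the average $q=\edge(\overline G)/\binom{n}{2}$, so a short Markov cleanup is needed first.

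Concretely, since $\sum_{v\in V(G)}d_{\overline G}(v)=qn(n-1)$, by Markov's inequality the set $Y:=\{v\in V(G):d_{\overline G}(v)>6q(n-1)\}$ satisfies $\abs{Y}\le n/6$. Writing $G':=G\setminus Y$ gives $5n/6\le\abs{G'}\le n$ together with
\[\frac{\Delta(\overline{G'})}{\abs{G'}-1}\le\frac{6q(n-1)}{5n/6-1}\le 12q,\]
where the last inequality uses $n\ge 12t\ge 24$. Thus the hypothesis $24^r(12q)^{r^2}\le\vep$ translates directly into the form required by \cref{lem:build} applied to $G'$ (and to its subgraphs obtained by later removing a few fragments).

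With this setup in place, the construction mirrors the one inside \cref{thm:denselinearhwg}. At step $k$, with $F_k:=G'\setminus(B_1\cup\cdots\cup B_k)$, I would take $A_i$ to be the set of $v\in V(F_k)$ with no $G$-neighbor in $B_i$ (inductively $\abs{A_i}\le\tfrac{1}{12}\vep^{1/r}n$), apply \cref{lem:build} on $F_k$ with $A_1,\ldots,A_k$ to obtain a seed $S\subset V(F_k)$ of size at most $r$ that is contained in at most $\vep k$ of the $A_i$'s (so $S$ is anticomplete in $G$ to at most $\vep k$ of the $B_i$'s) and dominates all but at most $\tfrac{1}{12}\vep^{1/r}n$ vertices of $F_k$, and then extend $S$ within $F_k$ into a connected subset $B_{k+1}$. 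Iterating for $k=0,1,\ldots,t-1$ and contracting $B_1,\ldots,B_t$ produces a minor with $t$ vertices and at most $\vep\binom{t}{2}$ nonedges.

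The main technical difficulty, and the place where this proof must be sharper than that of \cref{thm:denselinearhwg}, is the fragment budget: since $r$ can be as large as $n/(12t)$, fragments of the crude size $14r$ used in \cref{thm:denselinearhwg} would already consume more than $n$ vertices in total. Each $B_{k+1}$ therefore must be built with only $O(r)$ vertices, which requires bounding the diameter of $F_k$ tightly so that $S$ can be connected cheaply. The hypothesis forces $12q\le 24^{-1/r}$, which combined with the cleanup step makes $F_k$ sufficiently dense for the requisite diameter bound to hold at every step. Verifying this bound, tracking $\abs{F_k}\in[n/6,n]$, and propagating the error term $\tfrac{1}{12}\vep^{1/r}n$ from \cref{lem:build} across the $t$ iterations are the main things to check, and they are where I expect the bookkeeping to be most delicate.
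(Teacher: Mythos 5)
The overall architecture you describe (cleanup via Markov, then iterating \cref{lem:build}) matches the paper, and you correctly identify that the $14r$-vertex fragments of \cref{thm:denselinearhwg} would blow the budget here. But the fix you propose goes in the wrong direction, and I don't believe it can be made to work. You want to build each $B_{k+1}$ as a \emph{connected} set of size $O(r)$ \emph{inside} $F_k$, which requires the diameter of $F_k$ to be a small constant. After your cleanup, vertices of $G'$ have at most $6q(n-1)$ nonneighbors, so $\delta(G')\ge |G'|-1-6q(n-1)$; when $12q$ is close to $1$ (which the hypothesis $24^r(12q)^{r^2}\le\vep$ allows when $r$ is large) this gives $\delta(G')\approx n/3$, and after deleting $\Theta(n)$ vertices across the iterations one only gets $\delta(F_k)\gtrsim n/12$. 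With minimum degree $n/12$ in a graph on up to $n$ vertices, the neighborhood-disjointness argument bounds the diameter only by a constant around $30$, not by $2$; a fragment of size $30r$ over $t$ rounds would use roughly $30rt\approx 5n/2>n$ vertices, so the budget fails. I see no way to force diameter $O(1)$ small enough within $F_k$.

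The paper sidesteps this entirely by \emph{not} requiring the iterated sets to be connected. It passes to a set $S\subset V(G)$ with $|S|=\lfloor n/3\rfloor$ (using the tighter threshold $2qn$ for nonneighbors, so at least $n/2$ vertices qualify), runs \cref{lem:build} inside $G[S]$ to produce disjoint \emph{unconnected} sets $S_1,\ldots,S_t$ of size $\le r$, and only at the very end adds connecting vertices from the large reservoir $V(G)\setminus S$ of size $\ge 2n/3$. Because every pair of nonadjacent vertices of $S$ has at least $(1-4q)n-|S|\ge|S|$ common neighbors in that reservoir, each nonadjacent pair inside some $S_i$ can be joined by a length-two path using a fresh reservoir vertex, at a total cost under $rt\le n/12$. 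That reservation of two thirds of the graph for connecting, rather than trying to connect within the cleaned-up piece itself, is the idea your proposal is missing, and it is what keeps the budget linear in $n$.
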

	\begin{proof}
		Since $(12q)^{r^2}<\vep<1$, we have $q<\frac{1}{12}$.
		Since $\edge(\overline{G})\le\frac{1}{2}qn^2$,
		there exists $S\subset V(G)$ with $\abs{S}=\floor{\frac{1}{3}n}$ such that every vertex of $S$ has at most $2qn$ nonneighbors in $G$;
		in particular $\Delta(\overline{G}[S])\le 2qn$.
		\begin{claim}
			\label{claim:build}
			There exist disjoint nonempty subsets $S_1,\ldots,S_t$ of $S$ such that for each $i\in[t]$, $S_i$ is anticomplete to at most $\vep(i-1)$ sets among $S_1,\ldots,S_{i-1}$ in $G[S]$.
		\end{claim}
		\begin{subproof}
			Put $\gamma:=\frac{1}{12}\vep^{1/r}$.
			Let $k$ be an integer with $0\le k<t$; and assume we have constructed disjoint nonempty subsets $S_1,\ldots,S_k$ of $S$ such that for each $i\in[k]$,
			\begin{itemize}
				\item $\abs{S_i}\le r$;
				
				\item $S_i$ is anticomplete to at most $\vep(i-1)$ sets among $S_1,\ldots,S_{i-1}$; and
				
				\item at most $\gamma n$ vertices in $S\setminus(S_1\cup\cdots\cup S_{i-1})$ have no neighbors in $S_i$.
			\end{itemize}
			Let $F:=G[S]\setminus(S_1\cup\cdots\cup S_k)$; then since
			$\abs{S_1}+\cdots+\abs{S_k}
			\le rk<rt\le \frac{1}{12}n$
			and $n\ge 12t\ge 24$, we have
			\[\abs{F}-1\ge\abs{S}-\frac{n}{12}-1
			\ge\frac{n}{3}-1-\frac{n}{12}-1
			=\frac{n}{4}-2
			\ge\frac{n}{6}.\]
			Hence, since $\Delta(\overline{F})\le\Delta(\overline{G}[S])\le 2qn$,
			\[24^r\left(\frac{\Delta(\overline{F})}{\abs{F}-1}\right)^{r^2}
			\le24^r(12q)^{r^2}\le\vep.\]
			
			For each $i\in[k]$, let $A_i$ be the set of vertices in $V(F)$ with no neighbors in $S_i$; then $\abs{A_i}\le\gamma n$.
			By~\cref{lem:build} applied to $F$ with $A_1,\ldots,A_t$, there exists $S_{k+1}\subset V(F)$ with
			\begin{itemize}
				\item $\abs{S_{k+1}}\le r$;
				
				\item $S_{k+1}\subset A_i$ for at most $\vep k$ indices $i\in[k]$,
				which means that $S_{k+1}$ is anticomplete to at most $\vep k$ sets among $S_1,\ldots,S_k$; and
				
				\item at most $\gamma n$ vertices in $V(F)\setminus S_{k+1}=S\setminus(S_1\cup\cdots\cup S_{k+1})$
				have no neighbors~in~$S_{k+1}$.
			\end{itemize}
			
			Iterating this for $k=0,1,\ldots,t-1$ in turn proves~\cref{claim:build}.
		\end{subproof}
		Now, let $S_1,\ldots,S_t$ be obtained from \cref{claim:build}.
		Since every vertex in $S$ has at most $2qn$ nonneighbors in $G$,
		every two nonadjacent vertices in $S$ have at least $(1-4q)n$ common neighbors in $G$,
		and so at least $(1-4q)n-\abs{S}\ge \frac{2}{3}n-\abs{S}\ge\abs{S}$ common neighbors in $V(G)\setminus S$;
		note that $q<\frac{1}{12}$.
		Thus we can add vertices from $V(G)\setminus S$ to $S_1,\ldots,S_t$ to get disjoint nonempty subsets $B_1,\ldots,B_t$ of $V(G)$ such that $S_i\subset B_i$ and $G[B_i]$ is connected for all $i\in[t]$.
		Therefore $\{G[B_1],\ldots,G[B_t]\}$ is an $H$ model in $G$ for some graph $H$ with
		$\edge(\overline{H})\le\vep(1+2+\cdots+(t-1))=\vep{t\choose2}$.
		This proves~\cref{lem:avgdegbip1}.
	\end{proof}
	We now give a proof of \cref{thm:avgdegbip},
	which is adapted from the proof of~\cite[Theorem 3.2]{np2021}.
	\begin{proof}
		[Proof of~\cref{thm:avgdegbip}]
		Let $C:=100C_{\ref{thm:denselinearhwg}}$.
		Let 
		$d:=t\sqrt{\log(1/\vep)}$,
		and let $G$ be a counterexample to \cref{thm:avgdegbip} with bipartition $(A,B)$ and with $\abs{G}+\edge(G)$ minimal;
		then $G$ has no $(\vep,t)$-dense minor while
		\[\edge(G)\ge Cd \sqrt{\abs{A}\abs{B}}+t\abs{G}.\]
		
		By symmetry, we may assume that $\abs{A}\ge\abs{B}$.
		Put $p:=\sqrt{\abs{A}/\abs{B}}\ge1$.
		\begin{claim}
			\label{claim:avgdegbip}
			$p\ge5$, and there exists $u_0\in A$ with $d_G(u_0)<C_{\ref{thm:denselinearhwg}}d$.
		\end{claim}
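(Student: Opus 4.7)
The plan is to prove both assertions by contradiction, producing in each case a subgraph of $G$ whose average degree reaches the Kostochka--Thomason-type threshold $C_{\ref{thm:denselinearhwg}}t\sqrt{\log(1/\vep)}=C_{\ref{thm:denselinearhwg}}d$ and then quoting \cref{thm:denselinearhwg} to obtain an $(\vep,t)$-dense minor, contradicting our standing assumption on $G$. The constants are tailor-made for this: $d=t\sqrt{\log(1/\vep)}$ is precisely the critical parameter of \cref{thm:denselinearhwg}, and the choice $C=100C_{\ref{thm:denselinearhwg}}$ leaves plenty of slack.

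For the first part, suppose for contradiction that $p<5$. Using $|A|=p^2|B|$ and $\sqrt{|A||B|}=p|B|$, we compute
\[|G|=|A|+|B|=(p+1/p)\sqrt{|A||B|}<\tfrac{26}{5}\sqrt{|A||B|},\]
since $p+1/p$ is increasing on $[1,\infty)$. Combining this with $\edge(G)\ge Cd\sqrt{|A||B|}$ shows that the average degree $2\edge(G)/|G|$ of $G$ itself is at least $\tfrac{10}{26}\cdot Cd=\tfrac{10}{26}\cdot 100 C_{\ref{thm:denselinearhwg}}d$, which comfortably exceeds $C_{\ref{thm:denselinearhwg}}t\sqrt{\log(1/\vep)}$. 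Applying \cref{thm:denselinearhwg} directly to $G$ then produces the forbidden $(\vep,t)$-dense minor.

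For the second part, suppose instead that every $u\in A$ satisfies $d_G(u)\ge C_{\ref{thm:denselinearhwg}}d$. Since $|A|\ge|B|$, we may pick an arbitrary $A'\subset A$ with $|A'|=|B|$, and consider the bipartite subgraph $H:=G[A'\cup B]$. Then $|H|=2|B|$, and summing degrees over $A'$ yields $\edge(H)\ge|A'|\cdot C_{\ref{thm:denselinearhwg}}d=|B|\cdot C_{\ref{thm:denselinearhwg}}d$, so the average degree of $H$ is at least $C_{\ref{thm:denselinearhwg}}d$. A second application of \cref{thm:denselinearhwg} produces an $(\vep,t)$-dense minor in $H\subset G$, again a contradiction.

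Neither step is genuinely hard; the only point of care is that in the second part the size $|A'|=|B|$ is chosen precisely so that the per-vertex degree bound in $A$ converts into the average-degree threshold of \cref{thm:denselinearhwg} with no loss. Note also that the minimality of $G$ plays no role in this claim: both contradictions use only that $G$ has no $(\vep,t)$-dense minor and satisfies the stated edge inequality. Minimality is being reserved for the subsequent steps of the proof of \cref{thm:avgdegbip}.
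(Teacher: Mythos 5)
Your proof is correct and takes a genuinely different route from the paper. The paper's proof leans entirely on the minimality of the counterexample $G$: for each $u\in A$ it compares $\edge(G)$ against $\edge(G\setminus u)$ to obtain a per-vertex lower bound $d_G(u)>\frac{1}{2}Cdp^{-1}+t$, and symmetrically $d_G(v)>\frac{1}{2}Cdp+t$ for $v\in B$; the latter shows every $v\in B$ already has degree above $C_{\ref{thm:denselinearhwg}}d$, so \cref{thm:denselinearhwg} forces some $u_0\in A$ to have small degree, and then plugging $u_0$ back into the $A$-side bound yields $p\ge5$. You sidestep minimality entirely: for $p\ge5$ you observe that $\abs{G}=(p+1/p)\sqrt{\abs{A}\abs{B}}$, so if $p<5$ the global hypothesis $\edge(G)\ge Cd\sqrt{\abs{A}\abs{B}}$ already makes the average degree of $G$ exceed the threshold; for the existence of $u_0$ you balance the sides by passing to $G[A'\cup B]$ with $\abs{A'}=\abs{B}$, noting (correctly, using bipartiteness) that each $u\in A'$ keeps its full degree $d_G(u)$ there, so if all $A$-degrees were $\ge C_{\ref{thm:denselinearhwg}}d$ this induced subgraph would itself have average degree $\ge C_{\ref{thm:denselinearhwg}}d$. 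Both arguments then quote \cref{thm:denselinearhwg} to contradict the absence of an $(\vep,t)$-dense minor. Your version is cleaner in that it isolates exactly which hypotheses are needed for this claim (the edge bound and the no-dense-minor assumption, but not minimality), whereas the paper's version, though slightly more computational, produces the degree lower bounds as reusable byproducts of a single minimality mechanism that it invokes again immediately afterward (for the common-neighbor count of $v_1,v_2$). Either works; your observation that minimality is dispensable here is a nice clarification.
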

		\begin{subproof}
			
			For every $u\in A$,
			the minimality of $G$ implies that
			\[\edge(G\setminus u)< Cd
			\sqrt{(\abs{A}-1)\abs{B}}+ t(\abs{G}-1),\]
			and so
			\begin{equation*}
				d_G(u)=\edge(G)-\edge(G\setminus u)
				>Cd\left(\sqrt{\abs{A}}-\sqrt{\abs{A}-1}\right)\sqrt{\abs{B}}
				+t
				>\frac{1}{2}Cdp^{-1}+t.
			\end{equation*}
			Similarly, we have
			$d_G(v)>\frac{C}{2}dp+t$ for all $v\in B$.
			So, since $t\ge2$, $q\ge1$, and $C\ge10C_{\ref{thm:denselinearhwg}}$,
			we see that $d_G(v)>C_{\ref{thm:denselinearhwg}}d$ for all $v\in B$;
			thus by~\cref{thm:denselinearhwg}, since $G$ has no $(\vep,t)$-dense minor,
			there exists $u_0\in A$ with $d_G(u_0)<C_{\ref{thm:denselinearhwg}}d$.
			Thus $C_{\ref{thm:denselinearhwg}}d
			>\frac{1}{2}Cdp^{-1}$ and so $p>C/(2C_{\ref{thm:denselinearhwg}})\ge5$.
			This proves \cref{claim:avgdegbip}.
		\end{subproof}
		Now, let $v_1,v_2$ be distinct neighbors of $u_0$,
		and let $G'$ be the graph obtained from $G$ by deleting $u_0$ and identifying $v_1$ and $v_2$;
		then $G'$ is a minor of $G$.
		The minimality of $G$ implies that 
		\[\edge(G')< Cd\sqrt{(\abs{A}-1)(\abs{B}-1)}+ t(\abs{G}-2)
		< Cd\sqrt{\abs{A}(\abs{B}-1)}+ t\abs{G}\]
		and so, by \cref{claim:avgdegbip}, the number of common neighbors of $v_1,v_2$ in $G$ is
		\[\edge(G)-\edge(G')-d_G(u_0)
		>\frac{1}{2}Cdp
		-C_{\ref{thm:denselinearhwg}}d
		\ge \frac{1}{4}Cdp. \]
		Hence the number of common neighbors of every two neighbors of $u_0$ is at least $\ceil{\frac{1}{4}Cdp}=:s$.
		
		Now, put $n:=\ceil{\frac{1}{2}Cdp^{-1}+t}\ge t$,
		and let $S$ be an arbitrary subset of $N_G(u_0)$ of size $n$.
		Let $H$ be a random minor of $G$ obtained as follows:
		every vertex $u\in A\setminus\{u_0\}$ with a neighbor in $S$ picks one of its neighbors in $S$, say $\varphi(u)$, independently and uniformly at random;
		then we contract the edges $\varphi(u)u$.
		Given distinct $v_1,v_2\in S$,
		we see that they are nonadjacent in $H$ if and only if none of their common neighbors picked one of them,
		which occurs with probability at most
		\[\left(1-\frac{2}{n}\right)^s\le e^{-2s/n}=:q.\]
		Hence, there is a choice of $H$ for which the number of its nonedges is at most ${n\choose2}q$.
		If $q<\vep$ then $G$ would contain an $(\vep,t)$-dense minor by averaging (as $n\ge t$), a contradiction.
		We may thus assume $q\ge\vep$,
		and so $\vep^{-1}\ge q^{-1}=e^{2s/n}$
		which yields $n\log(1/\vep)\ge 2s$.
		Since $n\ge\frac{1}{2}Cdp^{-1}$,
		$s\ge \frac{1}{4}Cdp$,
		and $C\ge 100$, we have that
		$n^2\log(1/\vep)\ge 2ns\ge \frac{1}{4}C^2d^2
		\ge 500t^2\log(1/\vep)$,
		and so $n\ge 60t$.
		Put $r:=\floor{\frac{n}{12t}}$;
		then $r\ge\max(5,\frac{n}{15t})$.
		Since $s\ge np\ge 5n$, we have $24q=24e^{-2s/n}\le e^{-s/n}$.
		Also,
		\[r^2\frac{s}{n}
		\ge\left(\frac{n}{15t}\right)^2\frac{s}{n}
		=\frac{ns}{225t^2}
		\ge \log(1/\vep),\]
		and so
		$24^r(12q)^{r^2}
		\le (24q)^{r^2}
		\le e^{-r^2s/n}\le \vep$. 
		By \cref{lem:avgdegbip1}, $H$ (and so $G$) would contain an $(\vep,t)$-dense minor, a contradiction.
		This proves~\cref{thm:avgdegbip}.
	\end{proof}
	\section{Small dense subgraphs}
	\label{sec:smalldense}
	With \cref{thm:denselinearhwg,thm:avgdegbip} in hand,
	it is not hard to adapt the proof of \cite[Theorem 2.3]{delcourt2021reducing}
	to prove \cref{thm:smalldense};
	still, we give a proof for completeness.
	We need the following lemma from~\cite[Lemma 4.2]{delcourt2021reducing}.
	\begin{lemma}
		\label{lem:increment}
		Let $r\ge3$ be an integer, and let $\delta>0$.
		Let $G$ be a graph and let $S\subset V(G)$ satisfy
		\[(r-2)\cdot\edge_G(S)
		>(r-1)\delta\abs{S}+\partial_G(S).\]
		Then there exists a nonempty subset $S'$ of $ S$ such that $d_{G[S']}(v)\ge\max(\delta,\frac{1}{r}d_G(v))$ for all $v\in S'$.
	\end{lemma}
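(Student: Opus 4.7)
The plan is to run the natural iterative deletion: start with $S':=S$, and while some $v\in S'$ has $d_{G[S']}(v)<\max(\delta,\tfrac{1}{r}d_G(v))$, remove $v$ from $S'$. Any $S'$ on which this process halts automatically has the desired property for all its vertices, so the task is to show it cannot terminate with $S'=\emptyset$.

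Suppose for contradiction it does. Let $v_1,\ldots,v_n$ be the deletion order and $S_i:=S\setminus\{v_1,\ldots,v_{i-1}\}$; for each $i$, write $b_i$ for the number of edges from $v_i$ to $V(G)\setminus S_i$, so that $d_G(v_i)=d_{G[S_i]}(v_i)+b_i$. Partition $S=T_A\sqcup T_B$ by placing $v_i\in T_A$ when $d_{G[S_i]}(v_i)<\delta$ and $v_i\in T_B$ otherwise, which forces $d_{G[S_i]}(v_i)<\tfrac{1}{r}d_G(v_i)$ for every $v_i\in T_B$; rearranging this using $d_G(v_i)=d_{G[S_i]}(v_i)+b_i$ yields the far more useful bound $d_{G[S_i]}(v_i)<\tfrac{1}{r-1}b_i$. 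A straightforward double-counting shows $\sum_{v\in S}b_v=\edge_G(S)+\partial_G(S)$, since each edge of $G[S]$ is charged once (to its later-deleted endpoint) and each edge of $\partial_G(S)$ is charged once (at its $S$-endpoint). As the edges removed across the whole process sum to exactly $\edge_G(S)$,
\[
\edge_G(S)<\delta\abs{T_A}+\frac{1}{r-1}\sum_{v\in T_B}b_v\le\delta\abs{S}+\frac{\edge_G(S)+\partial_G(S)}{r-1},
\]
and multiplying through by $r-1$ gives $(r-2)\edge_G(S)<(r-1)\delta\abs{S}+\partial_G(S)$, contradicting the hypothesis.

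The main obstacle is bounding the $T_B$ contribution tightly. The naive estimate $\max(\delta,\tfrac{1}{r}d_G(v))\le\delta+\tfrac{1}{r}d_G(v)$ combined with $\sum_{v\in S}d_G(v)=2\edge_G(S)+\partial_G(S)$ only delivers $(r-2)\edge_G(S)<r\delta\abs{S}+\partial_G(S)$, which is off by a factor of $\tfrac{r}{r-1}$ on the $\delta\abs{S}$ term and so does not contradict the hypothesis. Routing the type B estimate through $b_{v_i}$, i.e.\ through the edges leaving $S_i$ at the moment $v_i$ is deleted, rather than through $d_G(v_i)$ itself, is precisely what saves this factor and produces the $(r-1)\delta\abs{S}$ matching the hypothesis.
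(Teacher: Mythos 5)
Your proof is correct and complete. The paper does not prove this lemma itself — it is cited from Delcourt and Postle \cite[Lemma 4.2]{delcourt2021reducing} — but your argument is the natural greedy deletion proof one would expect, and every step checks out: the partition into $T_A$ (deleted because of the $\delta$ threshold) and $T_B$ (deleted because of the $\tfrac{1}{r}d_G(v)$ threshold), the rewriting of the $T_B$ condition as $d_{G[S_i]}(v_i)<\tfrac{1}{r-1}b_i$, the identity $\sum_{v\in S}b_v=\edge_G(S)+\partial_G(S)$ (each internal edge charged to its later-deleted endpoint, each boundary edge charged at its $S$-endpoint), and the identity $\sum_i d_{G[S_i]}(v_i)=\edge_G(S)$. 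Combining these gives $(r-2)\edge_G(S)<(r-1)\delta\abs{S}+\partial_G(S)$, contradicting the hypothesis. Your final remark about why the estimate must be routed through $b_i$ rather than $d_G(v_i)$ correctly identifies where the factor $\tfrac{r-1}{r}$ is gained; this is exactly the point of the lemma.
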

	We also need a classical result of Mader~\cite{MR229550} (see also~\cite[Theorem 1.4.3]{MR3822066}).
	\begin{lemma}
		\label{lem:avgkappa}
		For integer $k\ge1$,
		every graph with average degree at least $4k$ has a $k$-connected subgraph.
	\end{lemma}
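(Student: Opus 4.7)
The plan is to adapt Mader's classical extremal-subgraph argument. Set $\gamma := 2k - 1$ and consider the family $\mac H$ of subgraphs $H \subseteq G$ with $\abs{H} \ge 2k$ and $\edge(H) \ge \gamma(\abs{H} - k) + 1$. The hypothesis that $G$ has average degree at least $4k$, i.e.\ $\edge(G) \ge 2k\abs{G}$, places $G$ itself in $\mac H$; in particular $\mac H$ is nonempty. I would then let $H \in \mac H$ be chosen to minimize $\abs{H}$ and show that this $H$ is $k$-connected.

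First, I would use minimality to deduce $\delta(H) \ge 2k - 1$: if some $v \in V(H)$ had $d_H(v) \le 2k - 2$, then $H \setminus v$ would still satisfy the defining inequality (the right-hand side of the threshold drops by $\gamma = 2k - 1$ when a vertex is removed, exceeding the loss of at most $2k - 2$ edges), contradicting minimality of $\abs{H}$. A short direct check also gives $\abs{H} \ge 2k + 1$: any graph on $2k$ vertices has at most $\binom{2k}{2} = \gamma k$ edges, one short of the threshold $\gamma k + 1$.

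Next, I would show $H$ is $k$-connected by contradiction. Suppose $X \subset V(H)$ is a cutset with $\abs{X} \le k - 1$, and write $V(H) \setminus X = A \sqcup B$ with $A, B$ nonempty and anticomplete in $H$. Set $H_A := H[A \cup X]$ and $H_B := H[B \cup X]$. Since $\delta(H) \ge 2k - 1$, every vertex of $A$ has all its (at least $2k - 1$) neighbors inside $H_A$, so $\abs{H_A} \ge 2k$; likewise $\abs{H_B} \ge 2k$. Both $H_A$ and $H_B$ are strictly smaller than $H$, so by minimality neither lies in $\mac H$, giving $\edge(H_A) \le \gamma(\abs{H_A} - k)$ and $\edge(H_B) \le \gamma(\abs{H_B} - k)$. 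Summing, using $\edge(H_A) + \edge(H_B) \ge \edge(H)$ (every edge of $H$ lies in at least one part) together with $\abs{H_A} + \abs{H_B} = \abs{H} + \abs{X} \le \abs{H} + k - 1$, yields $\edge(H) \le \gamma(\abs{H} - k - 1)$, contradicting $\edge(H) \ge \gamma(\abs{H} - k) + 1$ as soon as $\gamma \ge 1$, i.e.\ for all $k \ge 1$.

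The main obstacle is simply calibrating the linear functional $\edge(H) - \gamma(\abs{H} - k)$ and its threshold $+1$ so that all four requirements align: $G$ must lie in $\mac H$, graphs on fewer than $2k+1$ vertices must be excluded, minimality must force $\delta(H) \ge 2k - 1$, and the cut-splitting step must yield $\gamma$ units of slack — precisely the amount furnished by $\abs{X} \le k - 1$ combined with $\abs{H_A}, \abs{H_B} \ge 2k$. Once the constants are lined up the proof is mechanical; the hypothesis $\bar d(G) \ge 4k$ is in fact slightly stronger than strictly needed (roughly $\bar d(G) \ge 4k - 2$ suffices), leaving a comfortable margin.
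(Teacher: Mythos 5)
Your proof is correct. The paper itself gives no proof of this lemma---it cites Mader and points to Diestel's textbook (Theorem~1.4.3)---and your argument is precisely that standard Mader extremal argument: take a vertex-minimal subgraph $H$ with $\abs{H}\ge 2k$ and $\edge(H)\ge(2k-1)(\abs{H}-k)+1$, deduce $\delta(H)\ge 2k-1$ and $\abs{H}\ge 2k+1$ from minimality, and then split along a hypothetical small cutset $X$ to get $\edge(H)\le(2k-1)(\abs{H}-k-1)$, a contradiction. The only step you leave implicit is that $\abs{G}\ge 2k$ (so that $G\in\mac H$), but this is immediate since average degree at least $4k$ forces a vertex of degree at least $4k$ and hence $\abs{G}\ge 4k+1$; with that noted, the constants line up exactly as you claim and the argument is complete.
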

	We are now ready to prove \cref{thm:smalldense}, which we restate here for the convenience of the readers.
	\begin{theorem}
		\label{thm:sdense}
		There is an integer $C>0$ such that,
		for every $\vep\in(0,\frac{1}{3})$ and integers $k\ge t\ge2$,
		every graph with average degree at least $Ck$ contains either an $(\vep,t)$-dense minor or a $k$-connected subgraph with at most $C^2t\log^3(1/\vep)$ vertices.
	\end{theorem}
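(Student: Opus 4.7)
My plan is to establish the contrapositive: if $G$ has average degree at least $Ck$ and contains no $(\vep,t)$-dense minor, then $G$ has a $k$-connected subgraph on at most $C^2 t\log^3(1/\vep)$ vertices. The proof closely parallels that of \cite[Theorem 2.3]{delcourt2021reducing}, with their applications of the Kostochka--Thomason theorem replaced by \cref{thm:denselinearhwg} and their applications of \cite[Theorem 3.2]{np2021} replaced by \cref{thm:avgdegbip}. The point is that both new density bounds degrade by a factor of $\sqrt{\log(1/\vep)/\log t}$ relative to their $K_t$ counterparts, so that substituting them into the Delcourt--Postle argument effectively replaces every occurrence of $\log t$ by $\log(1/\vep)$ in the final bound.

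The argument begins by invoking \cref{thm:denselinearhwg} as a global ``density ceiling'': since $G$ has no $(\vep,t)$-dense minor, every subgraph of $G$ has average degree strictly less than $C_{\ref{thm:denselinearhwg}} t\sqrt{\log(1/\vep)}$. Next I apply \cref{lem:increment} with $r$ a small constant and $\delta=4k$ to extract a subset $S\subset V(G)$ such that $G[S]$ has minimum degree at least $4k$ while every $v\in S$ satisfies $d_{G[S]}(v)\ge d_G(v)/r$. By \cref{lem:avgkappa}, $G[S]$ contains a $k$-connected subgraph $H$; if $|H|\le C^2 t\log^3(1/\vep)$, we are done.

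Otherwise, I iterate within $H$: reapply \cref{lem:increment} with a boosted $\delta$ (taking advantage of the density ceiling to show that such a $\delta$ is consistent with the increment hypothesis), then again pass to a $k$-connected subgraph via \cref{lem:avgkappa}. In each round, either the size strictly drops by a constant factor, or one may form a bipartition of the current $k$-connected subgraph with enough cross-edges (guaranteed by the high minimum degree and the $k$-connectivity) to trigger \cref{thm:avgdegbip}, producing an $(\vep,t)$-dense minor of $G$ and contradicting our standing assumption. The iteration thus terminates after a bounded number of rounds with a $k$-connected subgraph of the required size.

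The main obstacle is the bookkeeping: the factor $\log^3(1/\vep)$ should arise as the product of (i) a $\sqrt{\log(1/\vep)}$ factor coming from the density ceiling given by \cref{thm:denselinearhwg}, (ii) a $\log(1/\vep)$ factor from the number of iterations required for the size-reduction to terminate, and (iii) a further $\sqrt{\log(1/\vep)}$ factor incurred whenever \cref{thm:avgdegbip} is applied to a bipartition of the current subgraph. The delicate part is choosing the parameters $r$ and $\delta$ in each call to \cref{lem:increment} so that the per-iteration size blow-up stays bounded while the cumulative product matches these three factors exactly, mirroring the parameter calibration in Delcourt and Postle's proof.
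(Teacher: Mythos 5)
Your high-level strategy --- follow Delcourt--Postle's proof of their Theorem~2.3 with \cref{thm:denselinearhwg} and \cref{thm:avgdegbip} substituted for their $K_t$-density ingredients --- is exactly what the paper does, and your accounting of where the three $\log(1/\vep)$-type factors come from is in the right ballpark. But the mechanism you describe does not match the actual argument and, as written, has a genuine gap.

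The core issue is your proposed ``iterate \cref{lem:increment} and \cref{lem:avgkappa} until the size shrinks'' loop. Neither lemma provides any size control on its own: \cref{lem:avgkappa} gives a $k$-connected subgraph of unbounded size, and reapplying \cref{lem:increment} with a boosted $\delta$ gives a subset with higher minimum degree, not a smaller one. Your claimed dichotomy --- ``either the size strictly drops by a constant factor, or one may form a bipartition with enough cross-edges to trigger \cref{thm:avgdegbip}'' --- is unsupported; high minimum degree plus $k$-connectivity do not by themselves produce a bipartition crossing the $Ct\sqrt{\log(1/\vep)}\sqrt{\abs{A}\abs{B}}+t\abs{G}$ threshold, and you give no construction of the bipartition or the pair $(A,B)$. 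The actual proof is a single pass, not an iteration: one takes a \emph{maximal} family of vertex-disjoint connected subgraphs $F_1,\dots,F_m$ of size $\ceil{10\Gamma^2(t/k)^2}$ whose contraction loses at most $\frac{d}{20}$ edges per vertex absorbed, passes to the resulting minor $G'$, and partitions $V(G')$ into $X$ (the contracted vertices), $Y$ (vertices with $\ge d/30$ neighbors in $X$), $Z$ (vertices of $G'$-degree $\ge 20d\Gamma$), and $S$ (the rest). \cref{thm:avgdegbip} (and \cref{lem:avgdegbip1}) is applied to the specific bipartite pairs $(X,Y)$ and $(S,T)$ with $T=X\cup Y\cup Z$ to show $\abs{Y}$ is small and $G'[S]$ is dense; only then is \cref{lem:increment} invoked (once, with $r=3$), and the final $k$-connected subgraph lives in the neighborhood $R$ of a single connected $F\subset S'$ whose size is limited by the \emph{maximality} of $m$. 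The $C^2t\log^3(1/\vep)$ bound on $\abs{R}$ comes from $\abs{R}\le\sum_{v\in V(F)}d_{G'}(v)\le 20\Gamma d\cdot\abs{F}$ together with $\abs{F}\le 10\Gamma^2(t/k)^2$ and $d\le 2Ck$ and $V(F)\subset S$ (hence $V(F)\cap Z=\emptyset$). Without the maximal contraction family, the degree threshold $Z$, and the maximal-$F$ argument, your sketch has no way to guarantee the size bound, so the proof as proposed does not go through.
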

	\begin{proof}
		Let $C:=400\max(C_{\ref{thm:denselinearhwg}},C_{\ref{thm:avgdegbip}})$.
		Put $\Gamma:=\log(1/\vep)$;
		then by \cref{thm:denselinearhwg},
		we may assume $k\le \Gamma^{1/2}t$.
		Let $G$ be a graph with average degree $d\ge Ck$;
		we may assume $Ck\le d\le 2Ck$.
		Let $m\ge0$ be maximal such that there exist vertex-disjoint subgraphs $F_1,\ldots,F_m$ of $G$ satisfying
		\begin{itemize}
			\item $F_i$ is connected and $\abs{F_i}=\ceil{10\Gamma^2(t/k)^2}$ for all $i\in[m]$; and
			
			\item the graph $G'$ obtained from $G$ by contracting each $F_i$ into a vertex $x_i$ satisfies
			\[\edge(G)-\edge(G')\le\frac{d}{20}(\abs{G}-\abs{G'}).\]
		\end{itemize}
		Such a collection $F_1,\ldots,F_m$ always exists because the empty collection satisfies the two bullets.
		Let
		\begin{align*}
			X&:=\{x_i:i\in[m]\},\\
			Y&:=\{v\in V(G')\setminus X:\abs{N_{G'}(v)\cap X}\ge {\textstyle \frac{1}{30}d}\},\\
			Z&:=\{v\in V(G')\setminus (X\cup Y):d_{G'}(v)\ge20d\Gamma\}.
		\end{align*}
		Note that $G'$ is $(\vep,t)$-dense-minor-free since $G$ is, and that $\abs{X}=m\le \frac{1}{10}\Gamma^{-2}(k/t)^2\abs{G}
		\le\frac{1}{10}\Gamma^{-1}\abs{G}$.
		In what follows, for disjoint $S,T\subset V(G')$,
		let $\edge_{G'}(S,T)$ be the number of edges of $G'$ between $S$ and $T$.
		\begin{claim}
			\label{claim:smalldense1}
			$\abs{Y}\le \Gamma\cdot(t/k)^2\abs{X}$,
			and so $\abs{Y}\le\frac{1}{10}\Gamma^{-1}\abs{G}$.
		\end{claim}
		\begin{subproof}
			Since $k\le \Gamma^{-1/2}t$, we may assume $\abs{Y}\ge\abs{X}$.
			By~\cref{lem:avgdegbip1} and since $G'$ is $(\vep,t)$-dense-minor-free,
			\[\edge_{G'}(X,Y)\le C_{\ref{thm:avgdegbip}}\Gamma^{1/2}t\sqrt{\abs{X}\abs{Y}}+t(\abs{X}+\abs{Y})
			\le C_{\ref{thm:avgdegbip}}\Gamma^{1/2}t\sqrt{\abs{X}\abs{Y}}+2t\abs{Y}.\]
			Now, since $\abs{N_{G'}(v)\cap X}\ge d/30$ for all $v\in Y$, since $C\ge 60C_{\ref{thm:avgdegbip}}$,
			and since $d\ge C_{\ref{thm:avgdegbip}}k\ge 2t$,
			we see that 
			\[\edge_{G'}(X,Y)\ge \frac{d}{30}\abs{Y}\ge 2C_{\ref{thm:avgdegbip}}k\abs{Y}
			\ge C_{\ref{thm:avgdegbip}}k\abs{Y}+2t\abs{Y}.\]
			It follows that $k\abs{Y}\le \Gamma^{1/2}t\sqrt{\abs{X}\abs{Y}}$
			and so $\abs{Y}\le \Gamma\cdot(t/k)^2\abs{X}$.
			This proves~\cref{claim:smalldense1}.
		\end{subproof}
		Now, let $T:=X\cup Y\cup Z$ and $S:=V(G')\setminus T$.
		\begin{claim}
			\label{claim:smalldense2}
			$\edge(G'[S])>\frac{2d}{5}\abs{S}+\edge_{G'}(S,T)$.
		\end{claim}
		\begin{subproof}
			Since $d_{G'}(v)\ge20d\Gamma$ for all $v\in Z$, we have $\abs{Z}\le
			2\cdot\edge(G)/(20d\Gamma)
			=\frac{1}{10}\Gamma^{-1}\abs{G}$.
			Thus,~\cref{claim:smalldense1} implies that $\abs{T}=\abs{X}+\abs{Y}+\abs{Z}\le \frac{3}{10}\Gamma^{-1}\abs{G}$.
			Since $\abs{S}\le\abs{G}$,
			by~\cref{thm:avgdegbip}, we have
			\[e_{G'}(S,T)
			\le C_{\ref{thm:avgdegbip}}\Gamma^{1/2}t\sqrt{\abs{S}\abs{T}}+t(\abs{S}+\abs{T})
			\le C_{\ref{thm:avgdegbip}}\Gamma^{1/2}t\sqrt{\abs{G}\cdot\frac{3}{10}\Gamma^{-1}\abs{G}}+2t\abs{G}
			\le C_{\ref{thm:avgdegbip}}t\abs{G}.\]
			Since $G'[T]$ is $(\vep,t)$-dense-minor-free, we also have 
			\[\edge(G'[T])\le C_{\ref{thm:denselinearhwg}}\Gamma^{1/2}t\abs{T}
			\le C_{\ref{thm:denselinearhwg}}\Gamma^{1/2}t\cdot \Gamma^{-1}\abs{G}
			\le C_{\ref{thm:denselinearhwg}}t\abs{G}. \]
			Hence, because $\edge(G')>\edge(G)-\frac{1}{20}d\abs{G}
			=\frac{9}{20}d\abs{G}$, $k\ge t$,
			and $C_{\ref{thm:denselinearhwg}}k,C_{\ref{thm:avgdegbip}}k\le \frac{1}{80}d$, 
			it follows that
			\begin{align*}
				\edge(G'[S])-\edge_{G'}(S,T)
				&=\edge(G')-2\cdot\edge_{G'}(S,T)-\edge_{G'}(T)\\
				&> \frac{9}{20}d\abs{G}-2C_{\ref{thm:avgdegbip}}k\abs{G}-C_{\ref{thm:denselinearhwg}}k\abs{G}
				\ge \frac{9}{20}d\abs{G}-\frac{1}{40}d\abs{G}-\frac{1}{80}d\abs{G}
				>\frac{2}{5}d\abs{G}.
				\qedhere
			\end{align*}
		\end{subproof}
		By~\cref{claim:smalldense2} and~\cref{lem:increment} applied to $G'[S]$ with $r=3$ and $\delta=\frac{1}{5}d$,
		there exists $S'\subset S$ such that $d_{G'[S']}(v)\ge \max(\frac{1}{5}d,\frac{1}{3}d_{G'}(v))$ for all $v\in S'$.
		Let $F$ be a nonempty subgraph of $G'[S']$ such that
		\begin{itemize}
			\item $F$ is connected and $\abs{F}\le\ceil{10\Gamma^2(t/k)^2}$;
			
			\item the graph $G''$ obtained from $G'$ by contracting $F$ into a vertex $x$ satisfies
			\[\edge(G')-\edge(G'')\le\frac{d}{20}(\abs{F}-1);\]
			
			\item subject to the above two bullets, $\abs{F}$ is maximal.
		\end{itemize} 
		Such an $F$ always exists since every one-vertex subgraph of $G'[S']$ satisfies the first two bullets.
		Let $R:=N_{G''}(x)\setminus X$, and let $R':=N_{G''}(x)\cap S'\subset R\setminus Y$.
		\begin{claim}
			\label{claim:smalldense3}
			$\abs{R'}\ge\frac{1}{6}\abs{R}$.
		\end{claim}
		\begin{subproof}
			Because $d_{G'[S']}(v)\ge \frac{1}{5}d$ for all $v\in V(F)\subset S'$,
			$\sum_{v\in V(F)}d_{G'[S']}(v)\ge \frac{1}{5}d\abs{F}\ge 4(\edge(G')-\edge(G''))$.
			It follows that
			\[\abs{R'}\ge\sum_{v\in V(F)}d_{G'[S']}(v)-2(\edge(G')-\edge(G''))
			\ge\frac{1}{2}\sum_{v\in V(F)}d_{G'[S']}(v).\]
			Now, since $d_{G'[S']}(v)\ge \frac{1}{3}d_{G'}(v)$ for all $v\in V(F)\subset S'$ and since $R\subset \bigcup_{v\in V(F)}N_{G'}(v)$, we obtain
			\[\abs{R'}\ge\frac{1}{2}\sum_{v\in V(F)}d_{G'[S']}(v)
			\ge \frac{1}{6}\sum_{v\in V(F)}d_{G'}(v)
			\ge\frac{1}{6}\abs{R}.\qedhere\]
		\end{subproof}
		Now, let $v\in R'$. 
		The maximality of $m$ yields $\abs{F}<\ceil{10\Gamma^2(t/k)^2}$.
		So, by the maximality of $\abs{F}$, the graph obtained from $G'$ by contracting $G'[V(F)\cup\{v\}]$ has fewer than
		$\edge(G')-\frac{1}{20}d\abs{F}\le \edge(G'')-\frac{1}{20}d$ edges, which yields
		$\abs{N_{G''}(x)\cap N_{G''}(v)}>\frac{1}{20}d-1$.
		Since $v\in R'\subset R\setminus Y$,
		$\abs{N_{G''}(v)\cap X}\le \frac{1}{30}d$; hence
		\[d_{G[R]}(v)\ge \abs{N_{G''}(v)\cap N_{G''}(x)}-\abs{N_{G''}(v)\cap X}
		>\frac{d}{20}-1-\frac{d}{30}
		=\frac{d}{60}-1\ge\frac{d}{120}.\]
		Therefore, by~\cref{claim:smalldense3},
		\[2\cdot\edge(G[R])
		\ge\sum_{v\in R'}d_{G[R]}(v)
		\ge \frac{1}{120}d\abs{R'}
		\ge \frac{1}{720}d\abs{R}\]
		so $G[R]$ has average degree at least $\frac{1}{720}d\ge 4k$.
		By \cref{lem:avgkappa}, $G[R]$ contains a $k$-connected subgraph $H$.
		Finally, since $d_{G'}(v)\le 20\Gamma d$ for all $v\in V(F)\subset S$, we have
		\[\abs{R}\le\sum_{v\in V(F)}d_{G'}(v)
		\le 20\Gamma d\abs{F}
		\le400C\Gamma^3k(t/k)^2
		\le C^2\Gamma^3t\]
		where the third inequality holds as $d\le 2Ck$ and $\abs{F}\le 10\Gamma^2(t/k)^2$,
		and the last inequality holds as $k\ge t$.
		Hence $\abs{H}\le \abs{R}\le  C^2\Gamma^3t$.
		This proves~\cref{thm:sdense}.
	\end{proof}
	The following handy consequence of \cref{thm:smalldense} (cf.~\cite[Corollary 6.1]{delcourt2021reducing}) will be needed later on.
	\begin{corollary}
		\label{cor:smalldense}
		Let $k\ge t\ge2$ be integers.
		Let $\vep>0$, and let $\Gamma:=\log(1/\vep)$.
		For every graph $G$, put
		\[g(G,\vep,t)=g_{\ref{cor:smalldense}}(G,\vep,t)
		:=1+\max_{F\subset G}\left\{\frac{\chi(F)}{t}:
		\abs{F}\le C_{\ref{thm:smalldense}}^2\Gamma^4t,\,
		\text{$F$ is $(\vep,t)$-dense-minor-free}\right\}.\]
		If $\chi(G)\ge3C_{\ref{thm:smalldense}}k\cdot g(G,\vep,t)$, then $G$ contains at least $\Gamma$ vertex-disjoint $k$-connected subgraphs each with at most $C_{\ref{thm:smalldense}}^2\Gamma^3t$ vertices.
	\end{corollary}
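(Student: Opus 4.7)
The approach is to iteratively apply \cref{thm:smalldense}, greedily extracting small $k$-connected subgraphs from the residual graph. Let $C_0 := C_{\ref{thm:smalldense}}$ and write $g := g(G,\vep,t)$ for brevity. I set $G_0 := G$ and construct subgraphs $H_1, H_2, \ldots$ inductively: at step $j+1$, letting $G_j := G \setminus \bigcup_{i \le j} V(H_i)$, I take a subgraph $F_j \subset G_j$ with $\delta(F_j) \ge \chi(G_j) - 1$ and apply \cref{thm:smalldense} to $F_j$ (with parameters $k$, $t$) to produce either an $(\vep,t)$-dense minor of $F_j$ or a $k$-connected subgraph of $F_j$ on at most $C_0^2\Gamma^3 t$ vertices; in the favourable case the latter becomes $H_{j+1}$. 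The goal is to show that the iteration runs for at least $\Gamma$ rounds.

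The chromatic bookkeeping uses the definition of $g$ crucially. Since each $H_i$ has at most $C_0^2\Gamma^3 t \le C_0^2\Gamma^4 t$ vertices, any such $H_i$ that is $(\vep,t)$-dense-minor-free satisfies $\chi(H_i) \le (g-1)t$. Summing, $\chi(G_j) \ge \chi(G) - j(g-1)t \ge 3C_0 k g - \Gamma(g-1)t$ for $j < \Gamma$, and — using $k \ge t$ together with the factor of $3$ slack in the hypothesis — this stays above $C_0 k + 1$ for every $0 \le j \le \Gamma - 1$. In particular the minimum-degree subgraph $F_j$ used to drive \cref{thm:smalldense} always has $\delta(F_j) \ge C_0 k$, and the iteration is alive at every step up through $j = \Gamma - 1$, producing the required $\Gamma$ subgraphs.

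The main obstacle is the $(\vep,t)$-dense minor alternative in \cref{thm:smalldense}, which, if it fires at some step, both prevents extraction of a $k$-connected subgraph and (if one took the minor's witness subgraph as $H_i$ anyway) would spoil the chromatic accounting by producing an $H_i$ that is not $(\vep,t)$-dense-minor-free. My plan to handle this is to choose $F_j$ more carefully: among all subgraphs of $G_j$ of minimum degree at least $C_0 k$, take one that is minimal with respect to inclusion subject to still being $(\vep,t)$-dense-minor-free. Such an $F_j$ forces \cref{thm:smalldense} into its $k$-connected alternative, and the extracted $H_{j+1} \subset F_j$ is automatically $(\vep,t)$-dense-minor-free. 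If no such $F_j$ exists in $G_j$ at some step, every minimum-degree-$C_0 k$ subgraph of $G_j$ has an $(\vep,t)$-dense minor; then peeling $G_j$ by repeatedly deleting vertices of degree less than $C_0 k$ within the $(\vep,t)$-dense-minor-free regime and invoking the definition of $g$ to bound the chromatic number of the resulting dense-minor-carrying core yields an upper bound on $\chi(G_j)$ that contradicts the lower bound above. This degeneracy/peeling step is the delicate part of the argument, but the book-keeping is routine and parallels the analogous step in~\cite[Corollary 6.1]{delcourt2021reducing}.
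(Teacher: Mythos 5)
Your iterative extraction strategy is workable but differs from the paper's argument, and as written it contains a genuine gap in the chromatic bookkeeping. You discard $H_1,\dots,H_j$ and charge $\chi(G_j) \ge \chi(G) - j(g-1)t$, i.e.\ you pay $(g-1)t$ colors per extracted piece. This overcounts by a factor of $j$: the correct bound follows from applying the definition of $g$ \emph{once} to the induced subgraph $G\bigl[\bigcup_{i\le j} V(H_i)\bigr]$, which has at most $\Gamma\cdot C_{\ref{thm:smalldense}}^2\Gamma^3 t = C_{\ref{thm:smalldense}}^2\Gamma^4 t$ vertices (this is exactly why the definition of $g$ allows $\Gamma^4$ rather than $\Gamma^3$), giving $\chi(G_j)\ge\chi(G)-(g-1)t$. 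With your per-step accounting, the claimed inequality $3C_{\ref{thm:smalldense}}kg - \Gamma(g-1)t > C_{\ref{thm:smalldense}}k$ is simply false once $\Gamma t$ exceeds $3C_{\ref{thm:smalldense}}k$ and $g$ is large; $k\ge t$ gives no control on $k/t$ relative to $\Gamma$, so the factor-of-$3$ slack does not rescue you. Replacing $j(g-1)t$ by $(g-1)t$ fixes the arithmetic: $3C_{\ref{thm:smalldense}}kg-(g-1)t \ge (3C_{\ref{thm:smalldense}}-1)kg \ge 2C_{\ref{thm:smalldense}}k > C_{\ref{thm:smalldense}}k+1$.

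Secondly, your treatment of the dense-minor alternative is not a complete argument. Choosing an inclusion-minimal subgraph of minimum degree $\ge C_{\ref{thm:smalldense}}k$ that is $(\vep,t)$-dense-minor-free does force the $k$-connected alternative of \cref{thm:smalldense}, but your fallback when no such subgraph exists --- ``peel and bound the dense-minor-carrying core via $g$'' --- does not work as described, since $g$ only controls the chromatic number of \emph{dense-minor-free} subgraphs, and the core you produce is by construction not one. The paper sidesteps both difficulties with a cleaner maximality argument: let $F_1,\dots,F_m$ be a \emph{maximal} family of vertex-disjoint $k$-connected subgraphs each on $\le C_{\ref{thm:smalldense}}^2\Gamma^3 t$ vertices and suppose $m<\Gamma$. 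Then $F := G\bigl[\bigcup_i V(F_i)\bigr]$ has at most $C_{\ref{thm:smalldense}}^2\Gamma^4 t$ vertices, so $\chi(F)\le tg$; while by maximality and \cref{thm:smalldense}, every subgraph of $F':=G\setminus F$ has average degree at most $C_{\ref{thm:smalldense}}k$, so $F'$ has degeneracy at most $C_{\ref{thm:smalldense}}k$ and $\chi(F')\le C_{\ref{thm:smalldense}}k+1$. This yields $\chi(G)\le tg + C_{\ref{thm:smalldense}}k+1 < 3C_{\ref{thm:smalldense}}kg$, a contradiction, with no per-step iteration and no separate peeling argument needed.
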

	\begin{proof}
		Let $m\ge0$ be maximal such that there is a collection $F_1,\ldots,F_m$ of vertex-disjoint $k$-connected subgraphs of $G$ each with at most $C_{\ref{thm:smalldense}}^2\Gamma^3t$ vertices.
		We may assume $m<\Gamma$.
		Let $F$ be the subgraph of $G$ induced by $\bigcup_{i\in[m]}V(F_i)$;
		then $\abs{F}\le C_{\ref{thm:smalldense}}^2\Gamma^3t\cdot m\le C_{\ref{thm:smalldense}}^2\Gamma^4t$
		and so $\chi(F)\le t\cdot g(G,\vep,t)$. Let $F':=G\setminus F$;
		then by~\cref{thm:smalldense} and the maximality of $m$,
		every subgraph of $F'$ has average degree at most $C_{\ref{thm:smalldense}}k$,
		and so $F'$ has degeneracy at most $C_{\ref{thm:smalldense}}k$
		which yields $\chi(F')\le C_{\ref{thm:smalldense}}k+1$.
		It follows that
		\[\chi(G)\le \chi(F)+\chi(F')
		\le t\cdot g(G,\vep,t)+C_{\ref{thm:smalldense}}k+1
		<3C_{\ref{thm:smalldense}}k\cdot g(G,\vep,t),\]
		a contradiction.
		This proves~\cref{cor:smalldense}.
	\end{proof}
	\section{Connectivity tools}
	\label{sec:conntool}
	\subsection{Basic tools}
	In this subsection, we recall several definitions and known results on connectivity.
	For a graph $G$ and subsets $S,T\subset V(G)$,
	a \emph{path between $S$ and $T$}
	is a path with one endpoint in $S$, the other endpoint in $T$,
	and no interval vertex in $S\cup T$;
	thus an edge between $S$ and $T$ is a path of length one between $S$ and $T$.
	A \emph{separation} of $G$ is a pair $(A,B)$ of subsets of $V(G)$ with $A\cup B=V(G)$ and $A\setminus B,B\setminus A$ anticomplete in $G$;
	and the \emph{order} of $(A,B)$ is $\abs{A\cap B}$.
	Thus $A\cap B$ is a cutset of $G$ if and only if $A\setminus B,B\setminus A\ne\emptyset$.
	We start with Menger's theorem~\cite{menger1927} and its \dd redundancy\ee{} version developed by Delcourt and Postle~\cite[Lemma 5.3]{delcourt2021reducing}.
	\begin{theorem}
		\label{thm:menger}
		Let $G$ be a graph, and let $S,T\subset V(G)$.
		Then for every integer $k\ge0$, either
		\begin{itemize}
			\item there are $k$ vertex-disjoint paths of $G$ between $S$ and $T$; or
			
			\item there is a separation $(A,B)$ of $G$ of order less than $k$ with $S\subset A$ and $T\subset B$.
		\end{itemize}
	\end{theorem}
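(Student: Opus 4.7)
The plan is to prove \cref{thm:menger} by induction on $\abs{G}+\edge(G)$. The \dd only if\ee{} direction is immediate: every $S$-$T$ path meets $A\cap B$, so $k$ vertex-disjoint such paths force $\abs{A\cap B}\ge k$. I focus on the main direction.

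Assume no separation of order less than $k$ satisfies $S\subset A$ and $T\subset B$; I construct $k$ vertex-disjoint $S$-$T$ paths. After trivial reductions ($k=0$ gives the empty family; if $S\cap T\ne\emptyset$, remove a shared vertex as a one-vertex path and induct with $k-1$), assume $k\ge1$ and $S\cap T=\emptyset$. Let $X$ be a minimum $S$-$T$ separator in $G$; by hypothesis $\abs{X}=k$. Let $A:=X\cup\{v:v\text{ is in the same component as some vertex of }S\text{ in }G\setminus X\}\cup S$ and $B:=V(G)\setminus(A\setminus X)$; then $(A,B)$ is a separation of order $k$ with $S\subset A$ and $T\subset B$.

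In the nondegenerate case where $G[A]$ and $G[B]$ are both proper subgraphs of $G$, I apply the induction hypothesis to $G[A]$ with the pair $(S,X)$ and to $G[B]$ with the pair $(X,T)$. In each, the minimum separator size is still at least $k$: a separation of $G[A]$ of order below $k$ between $S$ and $X$ would, by gluing on the $B$-side, produce one of $G$ of the same order between $S$ and $T$, contradicting the assumption. Induction then yields $k$ vertex-disjoint $S$-$X$ paths in $G[A]$ and $k$ vertex-disjoint $X$-$T$ paths in $G[B]$; since both families collectively use all $k$ vertices of $X$ as endpoints, I can concatenate them at their shared vertices in $X$ to obtain the required $k$ vertex-disjoint $S$-$T$ paths in $G$.

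The main obstacle is the degenerate case $G[A]=G$ or $G[B]=G$, where the recursion would not strictly decrease size; this amounts to every minimum separator being \dd too close\ee{} to $S$ or to $T$. To resolve it, I pick an edge $e=uv$ with $u\nin S$ and $v\nin T$ (if no such edge exists, $G$ is essentially bipartite between $S$ and $T$, and a K\"onig-type matching argument finishes). Applying induction to $G/e$ (contracting $e$ into a vertex $w$), either there are already $k$ vertex-disjoint $S$-$T$ paths in $G/e$, which lift to $k$ vertex-disjoint paths in $G$, or there is a separation of $G/e$ of order less than $k$; lifting the latter back to $G$, the case $w\nin X'$ gives a separator of $G$ of the same order (contradicting the assumption), and the case $w\in X'$ gives a separator of $G$ of order exactly $k$ containing both $u$ and $v$. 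This new separator is interior to $G$, so the nondegenerate splitting argument above applies to it in place of $X$, completing the induction.
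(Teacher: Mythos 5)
The paper does not prove this statement; Menger's theorem is cited as a classical result (Menger, 1927) and used as a black box, so there is no proof of the paper's to compare against. Judged on its own terms, your argument follows the standard inductive route (split at a minimum separator, contract an edge to escape the degenerate case) and is sound in outline, but the key step that makes the recursion terminate is asserted rather than proved, and one parenthetical claim is wrong.

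The unproved step is your assertion that the lifted separator $X$ (of size $k$, with $u,v\in X$, $u\nin S$, $v\nin T$) is \dd interior\ee{}, so that the new split is nondegenerate. This needs an argument, and here is one: $(S,V(G))$ and $(V(G),T)$ are separations with $S\subset A$, $T\subset B$, so the hypothesis gives $\abs{S}\ge k$ and $\abs{T}\ge k$; hence $S\not\subset X$ (otherwise $\abs{X}\ge\abs{S\cup\{u\}}\ge k+1$, using $u\nin S$) and similarly $T\not\subset X$, so $S\setminus X$ and $T\setminus X$ are nonempty and lie on opposite sides of the split, making both sides proper subgraphs. Without this observation the induction could loop. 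Separately, the parenthetical \dd if no such edge exists, $G$ is essentially bipartite and a K\"onig-type matching argument finishes\ee{} is incorrect under your stated condition: given $S\cap T=\emptyset$, no edge admits a labeling $uv$ with $u\nin S$ and $v\nin T$ precisely when every edge of $G$ lies entirely inside $G[S]$ or entirely inside $G[T]$; but then there is no $S$--$T$ path at all and $(S,V(G)\setminus S)$ is a separation of order $0<k$ with $S\subset A$, $T\subset B$, a contradiction --- the K\"onig case never arises for your choice of edge condition (edges between $S$ and $T$ do admit a valid labeling and are handled by contraction). Two smaller points: a minimum separator only has size $\ge k$, not necessarily $=k$; and you should specify how $S,T$ are redefined in $G/e$ when $u\in T$ or $v\in S$, since the contracted vertex may then land in the new $S\cap T$ and trigger the earlier reduction.
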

	\begin{lemma}
		\label{lem:redundant}
		Let $G$ be a graph,
		and let $S_1,S_2,T$ be disjoint subsets of $V(G)$ such that for each $i\in\{1,2\}$,
		there are $2\abs{S_i}$ paths between $S_i$ and $T$ in $G\setminus T_{3-i}$ which pairwise share no vertex in $V(G)\setminus S_i$, such that each vertex in $S_i$ is the endpoint of exactly two of them.
		Then there are $\abs{S_1}+\abs{S_2}$ vertex-disjoint paths between $S_1\cup S_2$ and $T$ in $G$.
	\end{lemma}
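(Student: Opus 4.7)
The plan is to argue by contradiction via Menger's theorem (\cref{thm:menger}). Writing $k_i := \abs{S_i}$ for $i \in \{1,2\}$, suppose there are no $k_1 + k_2$ pairwise vertex-disjoint paths between $S_1 \cup S_2$ and $T$ in $G$. Then \cref{thm:menger} produces a separation $(A, B)$ of $G$ with $S_1 \cup S_2 \subset A$, $T \subset B$, and $\abs{X} < k_1 + k_2$, where $X := A \cap B$. My goal is to derive a contradiction by showing $\abs{X} \geq k_1 + k_2$.

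I would extract this bound by a double-counting estimate applied separately to the two prescribed path families. Consider first the family $\mac P_1$ of $2k_1$ paths between $S_1$ and $T$ in $G \setminus S_2$: every such path goes from $A$ to $B$ while avoiding $S_2$, so it must meet $X' := X \setminus S_2$. The hypothesis constrains overlaps: paths in $\mac P_1$ share no vertex outside $S_1$, and by the definition of a \dd path between $S_1$ and $T$\ee{} they contain no internal vertex of $S_1$. Hence each $v \in X' \cap S_1$ lies on exactly the $2$ paths of $\mac P_1$ starting at $v$, while each $v \in X' \setminus S_1$ lies on at most one member of $\mac P_1$. Summing incidences on $X'$, and using $S_1 \cap S_2 = \emptyset$ to rewrite $\abs{X'} = \abs{X} - \abs{X \cap S_2}$ and $\abs{X' \cap S_1} = \abs{X \cap S_1}$, I expect to obtain
\[\abs{X} + \abs{X \cap S_1} - \abs{X \cap S_2} \;\geq\; 2k_1,\]
and symmetrically $\abs{X} + \abs{X \cap S_2} - \abs{X \cap S_1} \geq 2k_2$ from the corresponding family $\mac P_2$ on the other side. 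Adding these two inequalities, the asymmetric correction terms cancel and give $2\abs{X} \geq 2(k_1 + k_2)$, contradicting Menger's bound.

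I do not foresee any real obstacle: the entire content is packaged into the incidence count, and the \dd factor of $2$\ee{} redundancy on each side is precisely what is needed to absorb the $\abs{X \cap S_i}$ correction terms after symmetrization. The one subtle point to verify is that each $v \in X' \cap S_1$ is charged exactly twice---no more---which hinges on the convention (fixed at the beginning of \cref{sec:conntool}) that a path \dd between $S$ and $T$\ee{} has no internal vertex in $S \cup T$; without it, a vertex of $S_1$ could appear internally in additional paths and the key inequality would fail.
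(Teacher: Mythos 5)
The paper does not actually prove this lemma---it is cited verbatim from [Lemma 5.3] of Delcourt--Postle---so there is no in-paper proof to compare against. Your argument is correct: the Menger separation, the incidence count on $X'=(A\cap B)\setminus S_{3-i}$ yielding the pair of asymmetric inequalities $\abs{X}+\abs{X\cap S_i}-\abs{X\cap S_{3-i}}\ge 2\abs{S_i}$, and the cancellation upon summing are exactly the right mechanism; you have also correctly read the typo $T_{3-i}$ as $S_{3-i}$ and correctly flagged that the convention that a path between $S_i$ and $T$ has no internal vertex in $S_i\cup T$ is what makes the charge of exactly two incidences to each $v\in X'\cap S_i$ sound.
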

	For an integer $k\ge0$, a graph $G$, and subsets $\{s_1,\ldots,s_k\}$ and $\{t_1,\ldots,t_k\}$ of $V(G)$
	such that $s_i\ne t_j$ for all distinct $i,j\in[k]$,
	a \emph{linkage in $G$ linking} $\{(s_i,t_i):i\in[k]\}$ is a collection $\mac P=\{P_1,\ldots,P_k\}$ of vertex-disjoint paths in $G$ such that $P_i$ has endpoints $s_i$ and $t_i$ for all $i\in[k]$;
	and $G$ is \emph{$k$-linked} if for every such choice of $\{s_1,\ldots,s_k\}$ and $\{t_1,\ldots,t_k\}$,
	there is a linkage in $G$ linking $\{(s_i,t_i):i\in[k]\}$.
	Bollob\'{a}s and Thomason~\cite{MR1417341} were the first to prove that linear connectivity is sufficient to force high linkedness;
	and the current record on this problem is due to Thomas and Wollan~\cite{MR2116174} who proved that $10k$-connectivity yields $k$-linkedness.
	\begin{theorem}
		\label{thm:linked}
		For every integer $k\ge1$,
		every $10k$-connected graph is $k$-linked.
	\end{theorem}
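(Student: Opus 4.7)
The plan is to deduce \cref{thm:linked} from the following density version, which is in fact the theorem Thomas and Wollan actually prove: every $2k$-connected graph $G$ with $\edge(G)\ge 5k\abs{G}$ is $k$-linked. This implication is immediate, since a $10k$-connected graph has $\delta(G)\ge 10k$, hence $\edge(G)\ge 5k\abs{G}$, and is in particular $2k$-connected.

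To prove the density version, I would take a minimum counterexample: a $2k$-connected graph $G$ with $\edge(G)\ge 5k\abs{G}$ that is not $k$-linked, chosen with $\abs{V(G)}+\edge(G)$ minimum. Fix terminals $s_1,t_1,\ldots,s_k,t_k$ admitting no linkage. The minimality should force strong local structure. First, every non-terminal vertex should have degree close to $5k$: otherwise, deleting a low-degree vertex or an excess edge would preserve $2k$-connectivity and the density bound, giving a smaller counterexample. Second, for any edge $e=uv$ with both endpoints non-terminals, contracting $e$ must either destroy $2k$-connectivity (exposing a small cutset through $\{u,v\}$) or drop the edge density below the threshold (forcing $u$ and $v$ to share few common neighbors, since a contraction merges parallel edges). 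The plan is then to combine these structural constraints with \cref{thm:menger} to either reduce to a smaller linkage problem across a small cutset (and recurse), or find a single dense piece into which $2k$ disjoint paths from the terminals can be routed and then re-paired correctly.

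The main obstacle is carrying out the extremal analysis cleanly. Thomas and Wollan handle it through a careful induction on a tree-like decomposition of the counterexample into highly connected pieces, showing that all terminals may be assumed to lie in a single such piece, where an averaging argument on vertex degrees finally delivers the linkage. The constant $10$ reflects the looseness of the translation from edge density to linkedness; the conjecturally optimal constant is smaller, but improvements would seem to require genuinely new techniques, and for our purposes (black-box use via \cref{lem:redundant} and the linkage framework in the subsequent sections) the bound in \cref{thm:linked} is entirely adequate.
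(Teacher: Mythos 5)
The paper does not prove \cref{thm:linked}; it cites it as an external result of Thomas and Wollan \cite{MR2116174} and uses it as a black box. Your reduction to the density version (every $2k$-connected graph $G$ with $\edge(G)\ge 5k\abs{G}$ is $k$-linked, which is indeed the form in which Thomas and Wollan state their main theorem) is correct and immediate, exactly as you observe: $10k$-connectivity gives $\delta(G)\ge 10k$ so $\edge(G)\ge 5k\abs{G}$, and trivially implies $2k$-connectivity. You rightly acknowledge, rather than attempt to reproduce, the substantial extremal-structure induction that constitutes Thomas and Wollan's actual proof of the density version; since the paper itself treats this as a cited black box, your treatment is at the appropriate level of detail and matches the paper's stance.
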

	For integers $k\ge\ell\ge1$,
	$G$ is \emph{$(k,\ell)$-knit} if for every integer $m$ with $\ell\le m\le k$ and every $S\subset V(G)$ with $\abs{S}=k$ and every partition of $S$ into nonempty subsets $S_1,\ldots,S_m$,
	there are vertex-disjoint connected subgraphs $G_1,\ldots,G_m$ of $G$
	such that $S_i\subset V(G_i)$ for all $i\in[m]$.
	Thus $G$ is $k$-linked if it is $(2k,k)$-knit.
	Note that connectivity linear in $k-\ell$ is insufficient to force the $(k,\ell)$-knit property:
	indeed, given integers $C,a\ge1$,
	let $k\ge(C+1)a+1$, and let $G$ be $K_k$ with all edges in a clique of size $a+1$ removed;
	then $\kappa(G)\ge k-a-1\ge Ca$ but $G$ is not $(k,k-a)$-knit.
	Bollob\'{a}s and Thomason~\cite{MR1417341}
	observed that their method for proving linkedness given linear connectivity
	can be adapted to show the existence of some constant $C>0$ such that every $Ck$-connected graph is $(k,\ell)$-knit.
	Here we give a quick proof, using \cref{thm:linked}, that $11k$-connectivity is enough.
	\begin{theorem}
		\label{thm:knit}
		For all integers $k\ge\ell\ge1$,
		every $11k$-connected graph is $(k,\ell)$-knit.
	\end{theorem}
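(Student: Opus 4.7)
My plan is to reduce the $(k,\ell)$-knit property to $k$-linkedness via \cref{thm:linked}. A direct approach of connecting up each $S_i$ by realizing a spanning-tree structure as vertex-disjoint paths in $G$ fails because internal tree-vertices would serve as endpoints of multiple paths, violating the distinct-endpoints requirement of linkage. The remedy is to replace each vertex of $S$ by a matching number of distinct \emph{port} vertices chosen from its neighborhood, so that the whole problem collapses into a single clean linkage instance inside $G\setminus S$.

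Concretely, for each $i$ with $\abs{S_i}\ge 2$ I would fix a spanning path-tree $T_i$ on $S_i$, and let $d(v)\in\{0,1,2\}$ denote the tree-degree of each $v\in S$, so that $\sum_{v\in S}d(v)=2(k-m)\le 2k$. Then I would greedily select pairwise disjoint port sets $P_v\subset N_G(v)\setminus S$ with $\abs{P_v}=d(v)$; this is feasible because when processing $v$ the forbidden set (namely $S$ together with all previously chosen ports) has size at most $3k$, while $d_G(v)\ge\kappa(G)\ge 11k$ leaves at least $8k$ valid candidates.

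For each edge $uu'$ of some $T_i$, I would assign one unused port from $P_u$ and one from $P_{u'}$ to form a pair to be linked; this produces $k-m$ pairs with $2(k-m)$ distinct endpoints. Since $\kappa(G\setminus S)\ge 11k-k=10k$, the subgraph $G':=G\setminus S$ is $k$-linked by \cref{thm:linked}, so (after padding up to $k$ pairs using the spare vertices of $G'\setminus\bigcup_v P_v$, of which there are at least $8k$) we obtain $k-m$ vertex-disjoint paths in $G'$ realizing the prescribed port pairs. Taking $G_i$ to be the union of $S_i$, the port sets $P_v$ for $v\in S_i$, the port-edges $vp$ ($p\in P_v$), and the linking paths corresponding to the edges of $T_i$ yields connected subgraphs: connectedness of each $G_i$ follows because the structure realizes the tree $T_i$, and pairwise vertex-disjointness is automatic since the port sets are disjoint from each other and from $S$ and the linking paths lie entirely in $G\setminus S$.

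The only real obstacle is the port bookkeeping sketched above; once the ports are in place, everything falls out cleanly from $k$-linkedness together with the connectivity slack of $10k$ surviving inside $G\setminus S$.
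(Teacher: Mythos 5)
Your proposal is correct and is essentially the paper's own proof. The paper also reduces to $k$-linkedness of $G\setminus S$ via \cref{thm:linked}: it orders each $S_i$ as $u_i^1,\ldots,u_i^{k_i}$ (i.e., takes exactly your Hamiltonian spanning path on $S_i$), greedily picks two neighbors $v_{i1}^j,v_{i2}^j\in V(G)\setminus S$ for each $u_i^j$ using the slack $\delta(G)\ge 11k\ge 3|S|$, links $v_{i2}^j$ to $v_{i1}^{j+1}$ in $G\setminus S$, and threads the resulting paths through $S_i$. Your only cosmetic deviations are selecting precisely $d_{T_i}(v)$ ports per vertex rather than always two (which saves nothing but is fine), and spelling out the padding to reach exactly $k$ linkage pairs, a point the paper leaves implicit.
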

	\begin{proof}
		Let $G$ be a graph with $\kappa(G)\ge 11k$.
		Let $m$ be an integer with $\ell\le m\le k$,
		let $S\subset V(G)$ with $\abs{S}=k$
		and let $S_1\cup\cdots\cup S_m$ be a partition of $S$ into nonempty subsets.
		For every $i\in[m]$,
		let $k_i:=\abs{S_i}$ and $S_i=\{u_{i}^j:j\in[k_i]\}$;
		and we may assume $k_1\ge\ldots\ge k_m\ge1$.
		Let $n\ge0$ be maximal such that $k_n\ge2$.
		Then it suffices to construct vertex-disjoint connected subgraphs $G_1,\ldots,G_n$ of $G\setminus(\bigcup_{i=n+1}^mS_i)$ with $S_i\subset V(G_i)$ for all $i\in[n]$.
		Since $\delta(G)\ge\kappa(G)\ge11k\ge3\abs{S}$, there exists a set of $2n$ vertices $\bigcup_{i=1}^n\{v_{i1}^j,v_{i2}^j:j\in[k_i]\}\subset V(G)\setminus S$
		such that for every $i\in[n]$, $u_i^j$ is adjacent to $v_{i1}^j,v_{i2}^j$ for all $j\in[k_i]$.
		Since $\kappa(G\setminus S)\ge 11k-k= 10k$,
		$G\setminus S$ is $k$-linked by \cref{thm:linked};
		thus there is a linkage in $G\setminus S$ linking 
		$\bigcup_{i=1}^n\{(v_{i2}^j,v_{i1}^{j+1}):j\in[k_i-1]\}$.
		This linkage gives rise to the desired vertex-disjoint connected subgraphs $G_1,\ldots,G_n$ of $G$,
		proving \cref{thm:knit}.
	\end{proof}
	We remark that the above proof actually gives a stronger conclusion,
	of independent interest:
	given any linear ordering of the vertices in each of $S$,
	there is a path $P$ of $G$ such that $S\subset V(P)$
	and the vertices of $S$ lie on $P$ in the given order.
	
	We also need the following result~\cite{thn2022} on highly connected subgraphs of large chromatic number,
	which is an improvement on the constant factors of a result of Gir\~{a}o and Narayanan~\cite{girao2020subgraphs}.
	(In fact, it is proved in~\cite{thn2022} that the constant $4$ can be replaced by $3+\frac{1}{16}$,
	but we choose $4$ since this has a simple proof.)
	\begin{theorem}
		\label{thm:kappachi}
		For every integer $k\ge1$, every graph $G$ with $\chi(G)\ge4k$ contains a $k$-connected subgraph with chromatic number at least $\chi(G)-2k$.
	\end{theorem}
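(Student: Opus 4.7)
The plan is to take an induced subgraph $H\subset G$ with $|V(H)|$ minimum subject to $\chi(H)\ge\chi(G)-2k$, and to show that $H$ must itself be $k$-connected. By the choice of $H$, every proper induced subgraph $H'\subsetneq H$ satisfies $\chi(H')\le\chi(G)-2k-1<\chi(H)$, so $H$ is $\chi(H)$-vertex-critical; in particular $\delta(H)\ge\chi(H)-1\ge\chi(G)-2k-1$.

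Suppose for contradiction that $H$ admits a separation $(A,B)$ of order $s:=|A\cap B|\le k-1$ with $A\setminus B$ and $B\setminus A$ both nonempty. The standard clique-completion trick---add a $K_{|A\cap B|}$ on $A\cap B$ to each of $H[A]$ and $H[B]$, color each side optimally, and permute colors on one side so that both match on the clique---gives
\[\chi(H)\le\max(\chi(H[A]),\chi(H[B]))+s.\]
By the minimality of $|V(H)|$, both chromatic numbers on the right are at most $\chi(G)-2k-1$, and therefore $\chi(H)\le\chi(G)-k-2$. For $k=1$ (where $s=0$) this is already inconsistent with $\chi(H)\ge\chi(G)-2$, finishing that case.

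For $k\ge 2$ the bound is still consistent with $\chi(H)\ge\chi(G)-2k$, and this is where the bulk of the work lies. The idea is to exploit the $\chi(H)$-criticality of $H$ in the reverse direction: the same clique-completion trick applied to any minimum separator $X$ of $H$ forces $\max(\chi(H[A\cup X]),\chi(H[B\cup X]))\ge\chi(H)-|X|$, so at least one of the two sides has chromatic number already in the narrow window $[\chi(G)-3k+1,\chi(G)-2k-1]$. Combining this two-sided chromatic information with the minimum-degree bound $\delta(H)\ge\chi(H)-1\ge\chi(G)-2k-1$ and the hypothesis $\chi(G)\ge 4k$, the plan is to lift the average degree of $H$ to at least $4k$ (possibly after peeling off a controlled number of low-degree vertices), invoke \cref{lem:avgkappa} to extract a $k$-connected subgraph of $H$, and use the criticality once more to show that this subgraph inherits chromatic number at least $\chi(G)-2k$, contradicting the minimality of $|V(H)|$. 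The main obstacle is precisely this last step: bridging the gap between the average-degree threshold $4k$ needed by \cref{lem:avgkappa} and the raw bound $\delta(H)\ge\chi(G)-2k-1$, which can be as small as $2k-1$, and then transferring chromatic information from $H$ to the $k$-connected subgraph produced; this is where the assumption $\chi(G)\ge 4k$ is used most delicately.
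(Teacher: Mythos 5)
This theorem is not proved in the paper at all: it is quoted from the author's separate paper \cite{thn2022} (which in turn sharpens a result of Gir\~ao and Narayanan \cite{girao2020subgraphs}), so there is no in-paper proof to compare against. Your argument for $k=1$ is correct and complete: a vertex-minimal subgraph $H$ with $\chi(H)\ge\chi(G)-2$ is vertex-critical, and a cut vertex would give $\chi(H)\le\max(\chi(H[A]),\chi(H[B]))\le\chi(G)-3$, a contradiction.

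For $k\ge2$, however, the proposal has a genuine gap, which you yourself flag. Two points make the planned continuation via \cref{lem:avgkappa} unlikely to succeed. First, quantitatively, when $\chi(G)=4k$ the criticality bound gives only $\delta(H)\ge\chi(G)-2k-1=2k-1$, so the average degree of $H$ sits near $2k$ rather than the threshold $4k$ that \cref{lem:avgkappa} demands; the ``narrow window'' information from the separator does not obviously add the missing $2k$. Second, and more fundamentally, even if one could push the average degree of $H$ above $4k$, Mader's lemma extracts a $k$-connected subgraph with \emph{no control whatsoever} on its chromatic number: the subgraph it produces could be a tiny dense piece (e.g.\ on the order of $k$ vertices), whose chromatic number is far below $\chi(G)-2k$. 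There is no mechanism in the vertex-criticality of $H$ that forces such an extracted subgraph to inherit chromatic number near $\chi(H)$, so ``using the criticality once more'' in the last step is not a fixable technicality but the crux of the whole theorem. The known proofs of this type of statement therefore do not route through a pure average-degree extraction; instead they run a more careful iterative or potential-function argument that tracks separator sizes and chromatic number simultaneously, so that when the process terminates the surviving piece is $k$-connected \emph{and} has lost at most $2k$ from the chromatic number. As written, the proposal proves the theorem only for $k=1$.
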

	Last but not least, we require the following result~\cite[Lemma 6.2]{delcourt2021reducing}, which will be helpful to construct models with fragments of reasonably small chromatic number.
	\begin{lemma}
		\label{lem:container}
		Let $G$ be a connected graph.
		Then for every $S\subset V(G)$,
		there is $S'\subset V(G)$ with $S\subset S'$ and $\abs{S'}\le3\abs{S}$
		and an connected induced subgraph $H$ of $G$ containing $S'$ such that $\chi(H\setminus S')\le2$. 
	\end{lemma}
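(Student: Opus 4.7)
The plan is to build $H$ as an iterative union of shortest paths attaching successive vertices of $S$, and to place into $S'$ each attachment vertex together with its immediate neighbor along the new path. Concretely, I enumerate $S=\{s_1,\ldots,s_k\}$, set $W_0:=\{s_1\}$, and for each $i\ge 2$ with $s_i\nin W_{i-1}$ I pick a shortest path $P_i=v_i^{(0)}v_i^{(1)}\cdots v_i^{(\ell_i)}$ in $G$ from $s_i=v_i^{(0)}$ to $t_i:=v_i^{(\ell_i)}\in W_{i-1}$, and set $W_i:=W_{i-1}\cup V(P_i)$. Then $H:=G[W_k]$ is a connected induced subgraph of $G$ containing $S$, and I take
\[S':=S\cup\{t_i:i\ge 2\}\cup\{v_i^{(\ell_i-1)}:i\ge 2,\,\ell_i\ge 2\}.\]
Since each $i\ge 2$ contributes at most two new vertices beyond $S$, we get $\abs{S'}\le\abs{S}+2(\abs{S}-1)\le 3\abs{S}$.

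The heart of the argument is showing that $G[W_k\setminus S']$ is bipartite. The surviving vertices are precisely $\{v_i^{(j)}:i\ge 2,\,1\le j\le \ell_i-2\}$. The key observation, immediate from $P_i$ being a shortest path from $s_i$ to $W_{i-1}$, is that the $G$-distance from $v_i^{(j)}$ to $W_{i-1}$ equals $\ell_i-j$ for every $0\le j\le\ell_i$; in particular every surviving vertex lies at distance at least $2$ from $W_{i-1}$. Two consequences follow. Within a single $P_i$, any chord $v_i^{(j)}v_i^{(j')}$ with $\abs{j-j'}\ge 2$ would yield a walk from $s_i$ to $W_{i-1}$ of length at most $\ell_i-1$, contradicting minimality of $P_i$; so the $P_i$-survivors induce only the consecutive edges of $P_i$, forming a path. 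Between distinct paths $P_i,P_{i'}$ with $i<i'$, any surviving $v_{i'}^{(j)}$ sits at distance at least $2$ from $W_{i'-1}\supset V(P_i)$, and hence has no neighbor in $V(P_i)$. Therefore $G[W_k\setminus S']$ is a disjoint union of paths and $\chi(H\setminus S')\le 2$.

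The main obstacle being navigated here is the cross-path interaction: the vertex $v_i^{(\ell_i-1)}$ is only at $G$-distance $1$ from $W_{i-1}$, so it may have many neighbors among the internal vertices of earlier paths, potentially creating odd cycles that braid different $P_i$'s together. Pre-emptively sacrificing this one extra vertex per path into $S'$ is exactly what pushes every remaining vertex past the \dd distance-at-least-$2$\ee{} threshold, kills all cross-path edges, and accounts for the coefficient $3$ in the size bound (rather than the $2$ one would obtain from only $S\cup\{t_i:i\ge 2\}$).
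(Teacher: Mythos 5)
Your proof is correct. Note that the paper does not supply its own argument for this lemma; it is cited directly from Delcourt and Postle~\cite[Lemma 6.2]{delcourt2021reducing}, so there is no in-paper proof to compare against. Your construction --- iteratively attaching each $s_i$ to the growing connected set $W_{i-1}$ by a shortest path, and placing into $S'$ both the attachment vertex $t_i$ and its penultimate neighbor $v_i^{(\ell_i-1)}$ --- is sound. The central distance computation $d_G(v_i^{(j)},W_{i-1})=\ell_i-j$ follows correctly from the minimality of $P_i$, and this gives both ingredients you need: interior chords of $P_i$ are excluded, and every survivor of $P_{i'}$ (for $i'>i$) is at distance at least $2$ from $V(P_i)\subset W_{i'-1}$, so there are no cross-path edges among survivors. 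One small point worth making explicit: some internal $v_i^{(j)}$ may itself lie in $S$ (if a later $s_{i'}$ happens to land on $P_i$), in which case the survivors along $P_i$ break into several subpaths rather than one; but a disjoint union of paths is still bipartite, so this does not affect $\chi(H\setminus S')\le 2$, and your closing sentence already accounts for it. The size bound $\abs{S'}\le\abs{S}+2(\abs{S}-1)\le 3\abs{S}$ and the connectivity and inducedness of $H=G[W_k]$ are all immediate from the construction.
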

	%\begin{theorem}
	%	\label{thm:knit}
	%	There is an integer $C=C_{\ref{thm:knit}}>0$ such that for every integer $k\ge1$,
	%	every $Ck$-connected graph is $k$-linked.
	%\end{theorem}
	
	\subsection{Rooted dense minors and dense wovenness}
	We now develop tools on rooted minors which will be necessary for our $(\vep,t)$-dense minors construction.
	Given graphs $G,H$ and $S\subset V(G)$ with $\abs{S}\le \abs{H}$,
	an \emph{$H$ model attached to $S$} in $G$ is an $H$ model $\mac M=\{M_1,\ldots,M_{\abs{H}}\}$ in $G$ with $\abs{S\cap V(M_i)}=1$ for all $i\in\{1,2,\ldots,\abs{S}\}$;
	thus $\mac M$ is an $H$ model rooted at $S$ in $G$ if $\abs{S}=\abs{H}$.
	A separation $(A,B)$ of $G$ is \emph{$S$-separation} if $S\subset A$;
	and $(A,B)$ \emph{avoids} a subset $D\subset V(G)$
	if $D\subset B\setminus A$.
	The following lemma (with a simple proof, cf.~\cite[Lemma 2]{MR1417341}) is an extension of an argument by Kawarabayashi~\cite{MR2278128},
	which in turn is based on earlier work of Robertson and Seymour~\cite{MR1309358}.
	\begin{lemma}
		\label{lem:rooted}
		Let $m,t\ge1$ and $n\ge0$ be integers with $m\ge n+2t$. 
		Let $G$ be a graph and $S\subset V(G)$ with $\abs{S}=t$.
		Let $D_1,\ldots,D_{m}\ne\emptyset$ be disjoint subsets of $V(G)$
		and $I:=\{i\in[m]:D_i\cap S=\emptyset\}$, such~that
		\begin{itemize}
			\item for every $i\in I$, $G[D_i]$ is connected;
			
			\item for every $j\in[m]\setminus I$,
			each component of $G[D_j]$ intersects $S$;
			
			\item for every $j\in[m]$, $D_j$ is anticomplete in $G$ to at most $n$ sets in $\mac D:=\{D_i:i\in I\}$; and
			
			\item there is no $S$-separation of $G$ of order less than $t$ avoiding more than $n$ sets in $\mac D$.
		\end{itemize}
		Then $G$ has an $H$ model attached to $S$ for some graph $H$ with $\abs{H}=m-t$ and $\Delta(\overline{H})\le n$.
	\end{lemma}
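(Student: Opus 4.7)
I would prove this lemma by combining Menger's theorem (\cref{thm:menger}) with a careful gluing construction, in the spirit of the Robertson--Seymour and Kawarabayashi rooted-minor arguments cited before the statement. First, since the $D_j$'s are disjoint and each $j\in[m]\setminus I$ satisfies $D_j\cap S\ne\emptyset$, we have $|[m]\setminus I|\le t$ and hence $|I|\ge m-t$; moreover, for $j\notin I$, the components of $G[D_j]$ collectively intersect $S$ in at most $t$ vertices. The target model has $m-t$ fragments: $t$ rooted fragments, each containing a distinguished vertex of $S$ and built by absorbing the appropriate components of some $G[D_j]$ ($j\notin I$) together with a short path routed into one of the sets $D_i$ ($i\in I$); the remaining $m-2t$ fragments will essentially be the leftover sets $D_i$.

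To produce the required paths, I would translate the separation hypothesis into a Menger-type statement. In an auxiliary graph $G^+$ obtained by adjoining a super-source $s^*$ adjacent to every vertex of $S$ and, for each $i\in I$, a super-sink $y_i$ adjacent to every vertex of $D_i$, a cutset of order less than $t$ that separates $s^*$ from some $y_i$'s corresponds exactly to an $S$-separation of $G$ of order less than $t$ avoiding the matching sets $D_i$. Since at most $n$ such sets can be avoided by hypothesis, \cref{thm:menger} produces vertex-disjoint paths from $S$ to at least $|I|-n\ge m-t-n$ distinct targets $D_i$. These paths, combined with the components of the $G[D_j]$'s, form the skeleton of the model, and the hypothesis that each $D_j$ is anticomplete to at most $n$ sets in $\mac D$ translates directly into the edge-density condition $\Delta(\overline{H})\le n$, since each fragment inherits its anticompleteness structure from a distinguished core $D_j$.

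The main obstacle is the bookkeeping for the rooted fragments when a component of some $G[D_j]$ (with $j\notin I$) contains several vertices of $S$: these must be separated into distinct fragments while keeping each fragment connected, which requires splitting the component along a spanning tree or using extra paths to bypass the shared vertices without disturbing the paths above. I would handle this through a minimum counterexample argument on $|V(G)|+|E(G)|$, using standard reductions---absorbing a vertex outside $\bigcup_j D_j$ into a $D_j$ that it has a neighbor in, contracting a redundant edge inside some $D_i$, or peeling off a side of a small $S$-separation that avoids few sets in $\mac D$---to assume a minimal, irreducible setup. In that residual case the Menger-based construction sketched above can be glued unambiguously, producing the desired $H$ model attached to $S$.
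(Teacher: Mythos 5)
Your high-level plan (minimum counterexample on $|G|+\edge(G)$, plus a Menger application derived from the separation hypothesis, plus a gluing step) is the same route the paper takes, but two of the steps you sketch have genuine gaps that are in fact the entire technical content of the proof.

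First, the Menger translation as you state it does not follow from \cref{thm:menger}. You claim that the auxiliary-graph application of Menger "produces vertex-disjoint paths from $S$ to at least $|I|-n\ge m-t-n$ distinct targets $D_i$." But Menger applied to $S$ (or a super-source) and the union of the super-sinks gives you at most $t$ vertex-disjoint paths or a separation of order less than $t$; it says nothing about reaching $|I|-n$ distinct targets, which can vastly exceed $t$. The paper avoids this issue entirely by first reducing, via the minimality argument, to the degenerate situation where every edge of $G$ joins two distinct $D_j$'s, every $D_i$ with $i\in I$ is a singleton, and $V(G)=\bigcup_j D_j$; in that situation the single application of Menger (to $S$ and $T:=V(G)\setminus S$) is asked to produce only $t$ disjoint edges, and the separation alternative immediately contradicts the hypothesis because $|T|\ge m-t\ge t+n$. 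Your sketch never reaches a comparable degenerate configuration, so the Menger step is not actually available to you in the form you need.

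Second, the "standard reductions" you list (absorbing outside vertices, contracting edges inside a $D_i$, peeling a small separation) are the right instincts, but the hard part is showing that the contraction reduction preserves the fourth hypothesis, namely that there is no $S$-separation of order less than $t$ avoiding more than $n$ sets in $\mac D$. Contracting an edge can create new small separations, so after contracting $e$ one may well get a bad $S$-separation in $G/e$; one then has to lift this to an order-$t$ separation $(A,B)$ of $G$ with $e\subset A\cap B$, run a secondary Menger argument inside $G[A]$ to find $t$ disjoint $S$-to-$(A\cap B)$ paths (or derive a contradiction from a smaller separation), and then invoke the minimality hypothesis on $G[B]$ with the truncated sets $D_j\cap B$ to obtain a model on the far side before splicing the two together. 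This recursive reassembly is the heart of \cref{claim:rooted}, and your proposal simply asserts that "in that residual case the Menger-based construction sketched above can be glued unambiguously" without addressing it. As written, the argument does not close.
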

	\begin{proof}
		Suppose not.
		Let $G$ be a counterexample with $\abs{G}+\edge(G)$ minimal;
		let $D:=\bigcup_{j\in[m]}D_j$.
		If $E(G[S])=\emptyset$, then $G\setminus E(G[S])$ would be a smaller counterexample, a contradiction;
		so $E(G[S])=\emptyset$.
		\begin{claim}
			\label{claim:rooted}
			Every edge of $G$ is an edge between two distinct members of $D_1,\ldots,D_m$.
		\end{claim}
		\begin{subproof}
			Suppose there is $e\in E(G)$ not satisfying that property.
			For every $j\in[m]$, let $D_j'$ be obtained from $D_j$ after the contraction of $e$;
			then $D_1',\ldots,D_m'$ are nonempty disjoint connected subsets of $G/e$.
			By the minimality of $G$, $G/e$ has an $S$-separation of order less than $t$ avoiding more than $n$ sets in $\{D_i':i\in I\}$.
			Thus $G$ has an $S$-separation $(A,B)$ of order $t$ avoiding more than $n$ sets in $\mac D$
			such that $e\subset A\cap B=:S'$.
			Since $E(G[S])=\emptyset$,
			$S\ne S'$ which yields $B\subsetneq V(G)$.
			If there are fewer than $t$ vertex-disjoint paths from $S$ to $S'$ within $G[A]$,
			then \cref{thm:menger} would yield an $S$-separation $(A_1,A_2)$ of $G[A]$ of order less than~$t$;
			so $(A_1,A_2\cup B)$ would be an $S$-separation of $G$ of order less than $t$ avoiding more than $n$ sets in $\mac D$,
			a contradiction.
			Thus
			there are $t$ vertex-disjoint paths from $S$ to $S'$ within $G[A]$.
			If there is an $S'$-separation $(B_1,B_2)$ of $G[B]$ of order less than $t$ avoiding more than $n$ sets in $\mac D$,
			then $(A\cup B_1,B_2)$ would be an $S$-separation of $G$ of order less than $t$ avoiding more than $n$ sets in $\mac D$, a contradiction. 
			Now, note that for each $j\in[m]$, $D_j\cap B\ne\emptyset$ since $D_j$ is anticomplete to fewer than $n$ sets in $\mac D$;
			in particular either $G[D_j\cap B]$ is connected or its components intersect $S'$ .
			Thus, the minimality of $G$ applied to $G[B]$ with $S'$ and $\{D_j\cap B:j\in[m]\}$
			yields an $H$ model attached to $S'$ in $G[B]$ for some $H$ with $\abs{H}=m-t$ and $\Delta(\overline{H})\le n$.
			This model together with the above $t$ vertex-disjoint paths from $S$ to $S'$ yields an $H$ model attached to $S$ in $G$, a contradiction.
			This proves~\cref{claim:rooted}.
		\end{subproof}
		Now, by the minimality of $G$, it has no isolated vertex outside of $S$.
		So~\cref{claim:rooted} yields $D=V(G)$; and for every $j\in[m]$, if $\abs{D_j}\ge2$ then $D_j\subset S$.
		Let $T:=V(G)\setminus S=\bigcup_{i\in I}D_i$; then
		$\abs{T}\ge m-t\ge t+n$.
		If there are no $t$ disjoint edges between $S$ and $T$,
		then \cref{thm:menger} would yield an $S$-separation $(A,B)$ of order less than $t$ in $G$ with $T\subset B$ which avoids at least
		$\abs{T\setminus(A\cap B)}>t+n-t=n$
		sets in $\mac D$, a contradiction.
		Thus there are $t$ disjoint edges between $S$ and $T$;
		and contracting these edges would yield a graph $H$ with $\Delta(\overline{H})\le n$, a contradiction.
		This proves \cref{lem:rooted}.
	\end{proof}
	The following result is a straightforward consequence of \cref{lem:rooted}.
	\begin{corollary}
		\label{cor:rooted}
		Let $m,t\ge1$ and $n\ge0$ be integers with $m\ge n+2t$.
		Let $G$ be a $t$-connected graph, and let $S\subset V(G)$ with $\abs{S}=t$.
		If $G$ contains a minor $J$ with $\abs{J}=m$ and $\Delta(\overline{J})\le n$,
		then it contains an $H$ model attached to $S$ for some graph $H$ with $\abs{H}=m-t$ and $\Delta(\overline{H})\le n$.
	\end{corollary}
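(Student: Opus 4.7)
The plan is to apply \cref{lem:rooted} directly, with the branch sets of a $J$ model in $G$ playing the role of the sets $D_1,\ldots,D_m$. Concretely, I would take a $J$ model $\{F_1,\ldots,F_m\}$ in $G$ and set $D_j:=V(F_j)$ for each $j\in[m]$. The first three hypotheses of \cref{lem:rooted} then fall out of the definition of a $J$ model: the $D_j$'s are disjoint and nonempty; each $G[D_j]$ is connected, which simultaneously handles the requirement for $i\in I=\{i:D_i\cap S=\emptyset\}$ and the requirement that for $j\in[m]\setminus I$ the unique component of $G[D_j]$ must intersect $S$; and the bound $\Delta(\overline{J})\le n$ translates into each $D_j$ being anticomplete in $G$ to at most $n$ other branch sets, and in particular to at most $n$ sets of $\mac D=\{D_i:i\in I\}$.

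The only hypothesis that needs real argument is the absence of an $S$-separation $(A,B)$ of $G$ of order less than $t$ that avoids more than $n$ sets of $\mac D$, and this is precisely where the $t$-connectivity of $G$ comes in. If such an $(A,B)$ existed, then $D_i\subset B\setminus A$ for some nonempty $D_i\in\mac D$, so $B\setminus A\ne\emptyset$; and $S\subset A$ together with $\abs{S}=t>\abs{A\cap B}$ forces $A\setminus B\ne\emptyset$. Thus $A\cap B$ would be a cutset of $G$ of size less than $t$, contradicting $t$-connectivity.

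With all four hypotheses verified and with $m\ge n+2t$ given by assumption, \cref{lem:rooted} produces an $H$ model attached to $S$ in $G$ for some graph $H$ with $\abs{H}=m-t$ and $\Delta(\overline{H})\le n$, which is exactly the desired conclusion. There is no substantive obstacle: the main (and only) point to notice is that the $t$-connectivity hypothesis is precisely calibrated to discharge the separation condition in \cref{lem:rooted}, so the corollary is essentially a clean repackaging of the lemma in which the technical combinatorial input on the $D_j$'s is replaced by the two more natural hypotheses of a dense minor and high connectivity.
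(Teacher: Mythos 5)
Your proof is correct and is exactly the argument the paper intends: the paper labels \cref{cor:rooted} a ``straightforward consequence'' of \cref{lem:rooted} without spelling it out, and your instantiation $D_j := V(F_j)$ for a $J$ model, together with the observation that $t$-connectivity rules out an $S$-separation of order less than $t$ avoiding a nonempty branch set, is precisely how the lemma discharges to the corollary.
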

	For $\vep>0$ and integers $a\ge1,b\ge0$,
	a graph $G$ is \emph{$(\vep,a,b)$-woven} if for every subsets $R=\{r_1,\ldots,r_{a}\}$, $S=\{s_1,\ldots,s_b\}$,
	and $T=\{t_1,\ldots,t_b\}$ of $ V(G)$
	such that $s_i\ne t_j$ for all distinct $i,j\in[b]$,
	there is an $(\vep,a)$-dense graph $H$ such that
	$G$ has an $H$ model $\mac M$ 
	rooted at $R$ and a linkage $\mac P$ linking $\{(s_i,t_i):i\in[b]\}$
	such that $V(\mac M)\cap V(\mac P)=R\cap (S\cup T)$.
	Two remarks:
	\begin{itemize}
		\item the existence of $H$ depends entirely on $R$, $S$, and $T$ in this definition; and
		
		\item this definition can also be made for a much stronger notion of rooted minors which requires each fragment of the $H$ model to contain a prespecified vertex in $R$ (see~\cite{MR2407003}),
		but we do not need~it~here. 
	\end{itemize}
	
	The following useful lemma, an analogue of \cite[Lemma 5.15]{delcourt2021reducing},
	allows us to generate a dense minor and \dd weave\ee{} a given linkage around at the same time,
	as mentioned in \cref{sec:sketch}.
	\begin{lemma}
		\label{lem:woven}
		Let $\vep>0$, and let $a\ge1,b\ge0$ be integers.
		Let $G$ be a graph with an $(\vep,a,b)$-woven subgraph $F$.
		Let $S=\{s_1,\ldots,s_b\}$ and $T=\{t_1,\ldots,t_b\}$ be subsets of $V(G)$ such that $s_i\ne t_j$ for all distinct $i,j\in[b]$. 
		Let $\mac P'$ be a linkage in $G$ linking $\{(s_i,t_i):i\in[b]\}$,
		and let $\{r_1,\ldots,r_{a}\}\subset V(F)$.
		Then there is an $(\vep,a)$-dense graph $H$ such that there is an $H$ model $\mac M$ rooted at $\{r_1,\ldots,r_{a}\}$ in $F$
		and a linkage $\mac P$ in $G$ linking $\{(s_i,t_i):i\in[b]\}$ such that
		$V(\mac M)\cap V(\mac P)\subset R\cap(S\cup T)$ 
		and $V(\mac P)\subset V(F)\cup V(\mac P')$.
	\end{lemma}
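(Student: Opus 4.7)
The plan is to splice the input linkage $\mac P'$ with an internal linkage $\mac P^F$ produced by the $(\vep,a,b)$-wovenness of $F$. For each $i\in[b]$, traverse $P_i'$ from $s_i$ to $t_i$ and let $u_i$ (respectively $w_i$) be the first (respectively last) vertex of $P_i'$ lying in $V(F)$, when such vertices exist; let $I\subset[b]$ denote the set of indices for which $V(P_i')\cap V(F)\ne\emptyset$. The vertex-disjointness of $\mac P'$ guarantees $u_i\ne u_j$, $w_i\ne w_j$, and $u_i\ne w_j$ for distinct $i,j\in I$; moreover $u_i=s_i$ whenever $s_i\in V(F)$, and $w_i=t_i$ whenever $t_i\in V(F)$.

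Next, apply the $(\vep,a,b)$-wovenness of $F$ to the roots $R=\{r_1,\ldots,r_a\}$ and the $|I|$ pairs $\{(u_i,w_i):i\in I\}$, padded up to $b$ pairs by trivial pairs $(v,v)$ for pairwise distinct auxiliary vertices $v\in V(F)\setminus R$ chosen to preserve the distinctness condition. This produces an $(\vep,a)$-dense graph $H$, an $H$-model $\mac M$ in $F$ rooted at $R$, and a linkage $\mac P^F$ in $F$ satisfying $V(\mac M)\cap V(\mac P^F)=R\cap(U\cup W)$, where $U:=\{u_i:i\in I\}$ and $W:=\{w_i:i\in I\}$. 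Define $\mac P=\{P_i:i\in[b]\}$ by setting $P_i:=P_i'$ for $i\nin I$ and, for $i\in I$, taking $P_i$ to be the concatenation of $P_i'[s_i,u_i]$, $P_i^F$, and $P_i'[w_i,t_i]$. The containment $V(\mac P)\subset V(F)\cup V(\mac P')$ is immediate, and $\mac P$ is a vertex-disjoint linkage since its portions inside $V(F)$ arise from the linkage $\mac P^F$, its portions outside $V(F)$ arise from the linkage $\mac P'$, and the two parts meet only at the seams $u_i,w_i$.

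The main hurdle is verifying $V(\mac M)\cap V(\mac P)\subset R\cap(S\cup T)$. Since $V(\mac M)\subset V(F)$ and the external portions of each $P_i$ touch $V(F)$ only at $u_i$ and $w_i$, one has $V(\mac M)\cap V(P_i)=V(\mac M)\cap V(P_i^F)$, which the wovenness equality confines to $R\cap\{u_i,w_i\}$. It then suffices to ensure that every $u_i$ (resp.\ $w_i$) lying in $R$ also lies in $S\cup T$: this is automatic when $s_i\in V(F)$ (forcing $u_i=s_i\in S$), and similarly for $w_i$ and $t_i$. The delicate case, when $s_i\nin V(F)$ yet $u_i\in R\setminus(S\cup T)$, is resolved by refining the choice of $u_i$ to be the first vertex of $V(P_i')\cap V(F)$ outside $R\setminus(S\cup T)$ whenever such a vertex exists, and in the residual subcase (where $V(P_i')\cap V(F)$ is entirely contained in $R\setminus(S\cup T)$) by invoking the high connectivity of $F$ inherent in the wovenness together with an augmentation argument inside $G[V(F)\cup V(\mac P')]$ to reroute $P_i$ through an alternate entry into $V(F)\setminus R$, reducing to the earlier setting.
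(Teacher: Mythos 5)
Your construction mirrors the paper's: you take $u_i,w_i$ to be the first and last vertices of $P_i'$ in $V(F)$, apply the $(\vep,a,b)$-wovenness of $F$ to $R$ and the pairs $(u_i,w_i)$ (padded out to $b$ pairs), and splice. The paper does exactly this, with $s_i',t_i'$ playing the role of your $u_i,w_i$, and up to that point your verification of vertex-disjointness and of $V(\mac P)\subset V(F)\cup V(\mac P')$ is fine.

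You are also right to flag the subtlety that the paper glosses over: wovenness yields $V(\mac M)\cap V(\mac P^F)=R\cap(U\cup W)$, not the empty set, whereas the paper asserts that $\mac M$ is \dd vertex-disjoint from $\mac Q$\ee{} (true only when $R\cap(U\cup W)=\emptyset$). But the fix you propose does not close the gap. Redefining $u_i$ to be the first vertex of $V(P_i')\cap V(F)$ outside $R\setminus(S\cup T)$ fails, because the prefix $P_i'[s_i,u_i^{\mathrm{new}}]$ still passes through the skipped vertices of $R\setminus(S\cup T)$, and those lie in $V(\mac M)$ since $\mac M$ is rooted at $R$, so $V(\mac M)\cap V(\mac P)$ still leaks outside $R\cap(S\cup T)$. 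The residual augmentation step cannot be carried out in general either: take $F$ a $5$-cycle with a marked vertex $r_1$, and let $G$ be $F$ together with two new vertices $s_1,t_1$ adjacent only to $r_1$, with $\mac P'=\{s_1r_1t_1\}$, $R=\{r_1\}$, $a=b=1$. Then $F$ is $(\vep,1,1)$-woven, but every $s_1$--$t_1$ path in $G[V(F)\cup V(\mac P')]=G$ passes through $r_1$, and every model rooted at $\{r_1\}$ contains $r_1$, so $V(\mac M)\cap V(\mac P)\supset\{r_1\}$ while $R\cap(S\cup T)=\emptyset$. Thus the lemma as stated is actually false; it becomes correct, and both proofs go through, under the extra hypothesis that $R$ meets $V(\mac P')$ only in $S\cup T$ (equivalently, no root is an interior vertex of a path of $\mac P'$). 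That hypothesis is satisfied in every application in the paper, where $R$ is always a set of endpoints of $\mac P'$, which is presumably why the slip went unnoticed.
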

	\begin{proof}
		For each $i\in[a]$, let $P_i'$ be the path in $\mac P'$ linking $s_i,t_i$.
		Let $I:=\{i\in[b]:V(P_i')\cap V(F)\ne\emptyset\}$.
		For every $i\in I$,
		let $s_i',t_i'$, respectively, be the vertices nearest to $s_i,t_i$ on $P_i'$.
		Since $F$ is $(\vep,a,b)$-woven,
		there exists an $(\vep,a)$-dense graph $H$ and a linkage $\mac Q=\{Q_i:i\in I\}$ in $F$ linking $\{(s_i',t_i'):i\in I\}$
		such that there is an $H$ model $\mac M$ rooted at $\{r_1,\ldots,r_a\}$ in $F$ which is vertex-disjoint from $\mac Q$.
		For every $i\in I$, let $P_i:=s_iP_i's_i'Q_it_i'P_i't_i$;
		then $\mac M$ together with the linkage $\mac P=\{P_i:i\in I\}\cup \{P_i':i\in[b]\setminus I\}$
		satisfy the conclusion of \cref{lem:woven}.
	\end{proof}
	We now give two applications of \cref{cor:rooted} which will be needed later on.
	The first one says that linear connectivity together with a very dense minor yield dense wovenness.
	\begin{lemma}
		\label{lem:woven1}
		For every $\vep>0$ and integer $a\ge2$,
		every $8a$-connected graph $G$ containing a $(\frac{1}{256}\vep,32a)$-dense minor is $(\vep,a,3a)$-woven.
	\end{lemma}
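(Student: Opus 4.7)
The plan is to apply \cref{cor:rooted} to a pre-pruned version of the given $(\frac{1}{256}\vep,32a)$-dense minor $J$, rooted at $R\cup S\cup T$ augmented with private neighbors of each $r_k\in R\cap(S\cup T)$, and to construct the linkage by routing each pair through a distinct bridge fragment. I may assume $\vep<1$, as otherwise $(\vep,a)$-density is automatic.

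First I would trim $J$ to bound its maximum non-degree. Since $\edge(\overline{J})\le\frac{1}{256}\vep\binom{32a}{2}<2a^2\vep$, greedily removing vertices of non-degree exceeding $n:=\floor{\vep(a-1)}$ in $\overline{J}$ discards fewer than $2a^2\vep/n\le 4a$ vertices, leaving a sub-minor $J'$ of $G$ with $\abs{J'}\ge 28a$ and $\Delta(\overline{J'})\le n$. Separately, enumerate $R\cap(S\cup T)=\{r_{k_1},\ldots,r_{k_\ell}\}$ and greedily select private neighbors $r_{k_j}'\in N_G(r_{k_j})\setminus(R\cup S\cup T\cup\{r_{k_i}':i<j\})$; the forbidden set has size at most $7a+\ell-1\le 8a-1$ while $\delta(G)\ge\kappa(G)\ge 8a$, so such a choice always exists. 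Set $U:=R\cup S\cup T\cup\{r_{k_j}':j\in[\ell]\}$; by inclusion-exclusion $\abs{U}=\abs{R}+\abs{S\cup T}\le 7a<\kappa(G)$ and $\abs{J'}\ge 28a\ge n+2\abs{U}$, so \cref{cor:rooted} produces an $H$-model $\mac M=\{M_1,\ldots,M_{\abs{H}}\}$ attached to $U$ with $\abs{H}=\abs{J'}-\abs{U}\ge 21a$ and $\Delta(\overline{H})\le n$. Order $U$ so that $M_1,\ldots,M_a$ host $r_1,\ldots,r_a$: then the induced subgraph $H[\{1,\ldots,a\}]$ has at most $an/2\le\vep\binom{a}{2}$ non-edges, so $\mac M_R:=\{M_1,\ldots,M_a\}$ realizes an $(\vep,a)$-dense model of $G$ rooted at $R$.

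To build the linkage, for each $i\in[3a]$ let $\sigma_i$ index the fragment of $\mac M$ containing $s_i$, except that when $s_i=r_k\in R$ we redirect $\sigma_i$ to the fragment containing $r_k'$; define $\tau_i$ symmetrically. I would then greedily assign distinct bridges $b_i\in\{\abs{U}+1,\ldots,\abs{H}\}$ with $\sigma_ib_i,b_i\tau_i\in E(H)$: the number of valid bridges per $i$ is at least $(\abs{H}-\abs{U})-2n\ge 14a-2\vep a\ge 12a$, and after excluding the at most $3a-1$ previously chosen bridges more than $8a$ remain. The path $P_i$ then traces $s_i\to\cdots\to t_i$ by entering $M_{\sigma_i}$ (via the dedicated edge $r_kr_k'$ if $s_i=r_k\in R$, else directly), walking internally through $M_{\sigma_i}$ to an edge into $M_{b_i}$, through $M_{b_i}$ to an edge into $M_{\tau_i}$, and internally through $M_{\tau_i}$ to $t_i$ (again via $r_l'r_l$ if $t_i=r_l\in R$). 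The paths are pairwise vertex-disjoint because the $\sigma_i$'s, $\tau_i$'s, and $b_i$'s are distinct, and $V(\mac M_R)\cap V(\mac P)=R\cap(S\cup T)$ since no $P_i$ visits any fragment among $M_1,\ldots,M_a$ beyond the endpoints $r_k$ themselves.

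The principal obstacle is exactly this intersection constraint in the overlap case: when $s_i=r_k\in R$, the path $P_i$ must leave the model fragment $M_k$ at $r_k$ alone, yet there is no a priori edge in $G$ from $r_k$ into any chosen bridge. Declaring the private neighbor $r_k'$ as an additional root forces a deterministic first step $r_kr_k'$ into the non-model fragment $M_{r_k'}$, after which the standard bridge routing applies uniformly. This device fits within the connectivity budget precisely because $\abs{R\cap(S\cup T)}\le a$ and hence $\abs{U}\le 7a<8a\le\kappa(G)$.
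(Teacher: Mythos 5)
Your proposal follows essentially the same route as the paper's proof: trim the $(\frac{1}{256}\vep,32a)$-dense minor to bound its maximum non-degree, apply \cref{cor:rooted} to a root set built from $R$, $S$, $T$, and private neighbors, and route each $(s_i,t_i)$ pair through a distinct bridge fragment while taking $H$ to be the induced subgraph on the $a$ root-fragments. The one structural variation is welcome: the paper passes to $G\setminus R$ and uses private neighbors for \emph{all} of $R$, while you keep $R$ inside the root set $U$ and introduce private neighbors only for the overlap $R\cap(S\cup T)$, which handles the intersection requirement $V(\mac M)\cap V(\mac P)=R\cap(S\cup T)$ cleanly.

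The one slip is in the trimming bound: with $n:=\floor{\vep(a-1)}$, the inequality ``$2a^2\vep/n\le 4a$'' is false in some ranges and the expression is undefined when $\vep<1/(a-1)$, i.e.\ $n=0$. The fix is that each greedy removal deletes a vertex of non-degree $\ge n+1>\vep(a-1)$ and hence kills more than $\vep(a-1)$ non-edges, so fewer than $\edge(\overline{J})/(\vep(a-1))<2a^2/(a-1)\le 4a$ vertices are discarded for $a\ge2$ (no division by zero, and valid for all $\vep\in(0,1)$); alternatively one may use the paper's threshold $\frac{1}{2}\vep a$, which is always positive. With that repair your counting goes through and the proof is correct.
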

	\begin{proof}
		Since $G$ has a $(\frac{1}{256}\vep,32a)$-dense minor, it has a minor $J$ with $16a\le\abs{J}\le 32a$ and $\Delta(\overline{J})\le \frac{1}{2}\vep a$.
		Let $b:=3a$; let $R=\{r_1,\ldots,r_a\}$, $S=\{s_1,\ldots,s_b\}$, and $T=\{t_1,\ldots,t_b\}$ be subsets of $V(G)$ such that $s_i=t_j$ for some $i,j\in[b]$ only if $i=j$.
		We may assume $S\cap T=\emptyset$.
		Since $G$ is $8a$-connected and so has minimum degree at least $8a$,
		there is $R'=\{r_1',\ldots,r_a'\}\subset V(G)\setminus(R\cup S\cup T)$
		with $r_ir_i'\in E(G)$ for all $i\in[a]$.
		Now, let $G':=G\setminus R$;
		then $G'$ contains a minor $J'$ 
		which is an induced subgraph of $J$ 
		with $15a\le\abs{J'}\le 32a$ and $\Delta(\overline{J'})\le\Delta(\overline{J})\le\frac{1}{2}\vep a$.
		Let $m:=\abs{J'}$, $n:=\Delta(\overline{J'})$,
		and $t:=a+2b=7a$;
		then $m-n\ge (15-\vep)a\ge 14a=2t$.
		So by~\cref{cor:rooted},
		$G'$ has an $F$ model $\{G_1,\ldots,G_{m-t}\}$ for some $F$ with
		\begin{itemize}
			\item $V(F)=[m-t]$ and $\Delta(\overline{F})\le \Delta(\overline{J'})\le \frac{1}{2}\vep a$;
			
			\item $r_i'\in V(G_i)$ for all $i\in[a]$; and
			
			\item $s_i\in V(G_{a+i})$ and $t_i\in V(G_{a+2i})$ for all $i\in[b]$.
		\end{itemize}
		
		Observe that every two vertices of $F$ have at least $m-t-1-\vep a$ common neighbors,
		and so at least $m-2t-1-\vep a=m-(10+\vep)a-1\ge 3a=b$ common neighbors in $V(F)\setminus[t]$.
		It follows that there are distinct $j_1,\ldots,j_b\in V(F)\setminus[t]$ such that for every $i\in[b]$, $j_i$ is adjacent to both $a+i$ and $a+2i$.
		
		Now, let $H$ be the subgraph of $F$ induced by $[a]$; then $\abs{H}=a$ and $\Delta(\overline{H})\le\frac{1}{2}\vep a\le\vep(a-1)$,
		in particular $H$ is $(\vep,a)$-dense. 
		For every $i\in[b]$, let $P_i$ be some path between $s_i$ and $t_i$ in the subgraph of $G'$ induced by $V(G_{a+i})\cup V(G_{j_i})\cup V(G_{a+2i})$;
		then $\{G_1,\ldots,G_a\}$ is an $H$ model in $G'$ rooted at $R'$ and is vertex-disjoint from the linkage $\{P_1,\ldots,P_b\}$ linking $\{(s_i,t_i):i\in[b]\}$ in $G'$.
		Combining $\{G_1,\ldots,G_a\}$ with the edges $\{r_ir_i':i\in[a]\}$ gives an $H$ model in $G$ rooted at $R$ and vertex-disjoint from $\{P_1,\ldots,P_b\}$. This proves~\cref{lem:woven1}.
	\end{proof}
	Note the same argument shows that for every integer $k\ge1$, there is $K>0$ such that for every $\vep>0$ and integer $a\ge2$,
	every $Ka$-connected graph containing a $(K^{-1}\vep,Ka)$-dense minor is $(\vep,a,ka)$-woven.
	
	We also need the following result of Bollob\'{a}s and Thomason~\cite[Lemma 1]{MR1417341} (cf. \cref{lem:mader,lem:denseminor}).
	\begin{lemma}
		\label{lem:dense}
		Let $d\ge3$ be an integer.
		Then every graph with average degree at least $d$ contains a minor $J$ with $\abs{J}\le \frac{1}{2}d$ and $\Delta(\overline{J})\le\frac{1}{2}\abs{J}-\frac{3}{40}d$.
	\end{lemma}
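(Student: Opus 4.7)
The plan is to prove \cref{lem:dense} via an extremal argument applied to a minor obtained from Mader's theorem. As a first step, I would apply \cref{lem:mader} with parameter $d+1$ to obtain a minor $H$ of $G$ with $\abs H\le d+1$ and $\delta(H)\ge(d+1)/2\ge d/2$. At this point, $H$ has average degree at least $d/2$, but it is potentially much larger than $\frac{1}{2}d$, so the complement bound is weak. The idea is then to iteratively contract edges of $H$ to shrink its order while preserving (indeed, increasing) its average degree, aiming for a terminal minor $J$ with $\abs J\le\frac{1}{2}d$ and a strong lower bound on $\delta(J)$.

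Concretely, I would consider the contraction procedure where, given a current minor $J$, we pick an edge $uv\in E(J)$ minimizing $|N_J(u)\cap N_J(v)|$ and contract it. A direct computation shows that contracting an edge with $c$ common neighbors changes the average degree $\bar d(J)=2\edge(J)/\abs J$ by $(\bar d(J)-2(c+1))/(\abs J-1)$, so the contraction strictly increases $\bar d$ whenever $c<\frac{1}{2}\bar d(J)-1$. I would run this procedure as long as $\abs J>\lfloor d/2\rfloor$, tracking the monovariant $\Phi(J):=\edge(J)-\alpha\abs J^2$ for a constant $\alpha$ close to $\frac14$. Choosing $\alpha$ to balance the edge loss per contraction against the $\abs J$ decrement gives a terminal minor $J^*$ with $\abs{J^*}\le d/2$ and $\edge(J^*)\ge(\frac14\abs{J^*}+\frac{3}{20}d)\abs{J^*}$, after tuning constants so that the initial $\delta(H)\ge d/2$ bound propagates.

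To upgrade from average density to a pointwise complement-degree bound, I would argue by contradiction: suppose some $v\in V(J^*)$ has more than $\frac12\abs{J^*}-\frac{3}{40}d$ nonneighbors in $J^*$. Looking at the branch-set representation of $J^*$ in $G$, I would perform an exchange — either a further contraction of an edge in $J^*$ incident to $v$ (made possible by the assumed low degree of $v$), or a vertex-swap between the branch set $B_v$ and a branch set $B_w$ corresponding to a nonneighbor $w$ of $v$ — and show that this exchange strictly increases $\Phi$, contradicting the fact that the contraction procedure terminated. The constant $\frac{3}{40}$ emerges from the precise accounting here: the gain from converting an internal $B_v$-edge into a new $B_v$-to-$B_w$ crossing is set against the $\alpha\abs{J^*}^2$ term in $\Phi$.

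The main obstacle is controlling both invariants simultaneously during the contraction/exchange step: contractions can disconnect other branch sets in $G$, so one must restrict to edges whose contraction preserves the branch-set structure (for example, by always contracting along a spanning tree of $B_v$). Moreover, the constant $\frac{3}{40}$ is sensitive to the choice of $\alpha$ and to the precise exchange used in the final step; extracting exactly $\frac{3}{40}$ rather than a weaker constant will require carefully optimizing the edge-gain inequalities in both the contraction analysis and the vertex-swap analysis. Once these constants are balanced, the contradiction closes and $J^*$ serves as the required minor $J$.
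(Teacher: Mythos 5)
This lemma is cited in the paper from Bollob\'as and Thomason~\cite[Lemma 1]{MR1417341}, so there is no in-paper proof to compare against; I will assess your argument on its own terms. Your overall strategy --- obtain a small dense minor via Mader's theorem, then iteratively contract edges with few common neighbors to shrink it further, governed by a quadratic potential $\Phi(J)=\edge(J)-\alpha\abs J^2$ --- is the right shape of argument, but the key quantitative step is wrong and the potential analysis is not actually carried out.

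The intermediate bound you claim is impossible. You assert the process terminates with $J^*$ satisfying both $\abs{J^*}\le\tfrac12 d$ and $\edge(J^*)\ge\bigl(\tfrac14\abs{J^*}+\tfrac{3}{20}d\bigr)\abs{J^*}$. Writing $n=\abs{J^*}$, the edge bound combined with $\edge(J^*)\le\binom{n}{2}$ forces $\tfrac{n-1}{2}\ge\tfrac{n}{4}+\tfrac{3d}{20}$, i.e.\ $n\ge\tfrac35 d+2$, which contradicts $n\le\tfrac12 d$. So no tuning of $\alpha$ can produce this bound; the coefficient of $d$ there can be at most roughly $\tfrac{3}{40}$ for mere consistency, and you would still have to prove it. Separately, $\Phi$ is not a monovariant under your rule: contracting the edge with the fewest ($=c$) common neighbors changes $\Phi$ by $\alpha(2\abs J-1)-(1+c)$, which is negative whenever $J$ is locally dense (in any clique every edge has $\abs J-2$ common neighbors), so you need an additional case split showing that when no beneficial contraction exists the current $J$ is already dense enough to finish by passing to an induced subgraph of the right size --- and this case split is exactly where the work lies. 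The ``exchange'' argument for the pointwise degree bound is also unnecessarily convoluted: if one instead takes $J$ minor-minimal subject to a suitable edge-count constraint $\edge(J)\ge f(\abs J)$, then deleting a minimum-degree vertex must violate the constraint, giving $\delta(J)>f(\abs J)-f(\abs J-1)$ directly, with no branch-set surgery. Finally, the worry about ``contractions disconnecting other branch sets'' is a red herring: contracting an edge of the minor $J$ simply merges the two corresponding branch sets in $G$ (the union remains connected because $J$ has an edge between them) and leaves every other branch set untouched.
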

	Now we come the second application of \cref{cor:rooted}, which implies that high connectivity yields dense wovenness.
	\begin{lemma}
		\label{lem:woven2}
		There is an integer $C=C_{\ref{lem:woven2}}>10$ such that the following holds.
		Let $a\ge1$ and $b\ge0$ be integers, and let $\vep\in(0,\frac{1}{3})$.
		Then every $(2a+2b)$-connected graph $G$ with average degree at least $d= \ceil{C\max(a\sqrt{\log(1/\vep)},b)}$
		is $(\vep,a,b)$-woven.
	\end{lemma}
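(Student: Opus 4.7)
The plan is to follow the strategy of~\cref{lem:woven1}, relaxing the restriction $b \le 3a$ to arbitrary $b$, by applying~\cref{cor:rooted} with a larger rooted minor. Set $t := a + 2b$. Using $\delta(G) \ge \kappa(G) \ge 2a + 2b$, we first select for each vertex in $R \cap (S \cup T)$ (and $S \cap T$) a private neighbour of $G$ outside the union to act as its replacement, reducing to the case where $R$, $S$, $T$ are pairwise disjoint and $\abs{R \cup S \cup T} = t$.

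The crux is to produce a minor $J$ of $G$ with $\abs{J} \ge m := 2t + b + 2n$ and $\Delta(\overline{J}) \le n$, where $n := \lfloor \vep(a - 1)/2 \rfloor$. Granted $J$, since $\kappa(G) \ge 2a + 2b \ge t$, we apply~\cref{cor:rooted} with root set $R \cup S \cup T$ to obtain an $H$ model $\{G_1, \ldots, G_{m-t}\}$ attached to this set, with $\abs{H} = m - t \ge a + 3b + 2n$ and $\Delta(\overline{H}) \le n$. After relabelling, $r_i \in V(G_i)$ for $i \in [a]$ and $s_i \in V(G_{a + 2i - 1})$, $t_i \in V(G_{a + 2i})$ for $i \in [b]$. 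The induced subgraph $H[[a]]$ has $\Delta(\overline{H[[a]]}) \le n \le \vep(a - 1)$ and so is $(\vep, a)$-dense, whence $\mac M := \{G_1, \ldots, G_a\}$ is an $(\vep, a)$-dense model rooted at $R$. For the linkage, we greedily pick for each $i \in [b]$ a distinct index $j_i \in \{t + 1, \ldots, m - t\}$ adjacent in $H$ to both $a + 2i - 1$ and $a + 2i$; this succeeds because any two vertices of $H$ share at least $\abs{H} - 2 - 2n$ common neighbours, of which at least $\abs{H} - t - 2n \ge b$ lie outside $[t]$. Each $P_i$ is then a path through $V(G_{a+2i-1}) \cup V(G_{j_i}) \cup V(G_{a+2i})$ linking $s_i$ to $t_i$; the $P_i$'s are pairwise vertex-disjoint and avoid $\mac M$, certifying $(\vep, a, b)$-wovenness.

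The main obstacle is the production of $J$. We apply~\cref{thm:denselinearhwg} with parameters $m_0 := 2m$ and $\vep_0 := n/(2m_0)$ to produce a $(\vep_0, m_0)$-dense minor of $G$, then trim vertices of complement-degree exceeding $n$ one at a time: since the total number of non-edges is at most $\vep_0 \binom{m_0}{2} \le n m_0 / 4$, fewer than $m_0/2 = m$ vertices are removed, leaving a minor with at least $m$ vertices and $\Delta(\overline{J}) \le n$. The average degree demanded by~\cref{thm:denselinearhwg} is then at most an absolute constant times $m_0 \sqrt{\log(1/\vep_0)}$, which is $O\bigl((a + b) \sqrt{\log(1/\vep) + \log(1 + b/a)}\bigr)$. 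The delicate point is the residual $\log(1 + b/a)$ factor: when $b \le a \sqrt{\log(1/\vep)}$ it is absorbed into $\log(1/\vep)$ and the bound is covered by the $Ca\sqrt{\log(1/\vep)}$ term of the hypothesis, while when $b$ dominates, a short preliminary application of~\cref{lem:dense} extracts a fat minor of $G$ on $\Theta(b)$ vertices with minimum degree $\Theta(b)$, inside which~\cref{thm:denselinearhwg} applies without any $\log(b/a)$ overhead. Chasing constants through these two regimes shows that $C$ can be taken as a sufficiently large universal constant.
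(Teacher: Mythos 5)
Your approach has a genuine gap in the extraction of the minor $J$. You ask for a minor $J$ with $m\ge 2t+b+2n\approx 2a+5b$ vertices and $\Delta(\overline J)\le n=\lfloor\vep(a-1)/2\rfloor$, via a $(\vep_0,m_0)$-dense minor with $m_0\approx 4a+10b$ and $\vep_0\approx \vep a/(a+b)$. Applying \cref{thm:denselinearhwg} then needs average degree $\Theta\bigl(m_0\sqrt{\log(1/\vep_0)}\bigr)=\Theta\bigl((a+b)\sqrt{\log(1/\vep)+\log(1+b/a)}\bigr)$. You only address the $\log(1+b/a)$ piece, but the $m_0=\Theta(a+b)$ factor is where the trouble lies: when, say, $b$ is comparable to $a\sqrt{\log(1/\vep)}$, the required degree is $\Theta\bigl(a\log(1/\vep)\bigr)$, whereas the hypothesis supplies only $C\max\bigl(a\sqrt{\log(1/\vep)},b\bigr)=Ca\sqrt{\log(1/\vep)}$ --- a gap of a factor $\sqrt{\log(1/\vep)}$ that no constant $C$ absorbs. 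The sketched remedy for large $b$ via \cref{lem:dense} does not help: the resulting fat minor still has degree only $\Theta(b)$, but the subsequent application of \cref{thm:denselinearhwg} still demands $\Theta\bigl((a+b)\sqrt{\log(1/\vep_0)}\bigr)$, so nothing is saved. (A secondary issue: when $a<1+2/\vep$ you have $n=0$, so $\vep_0=0$ and the trimming step is meaningless; the argument then needs a $K_m$ minor outright, for which the hypothesis is far too weak.)

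The paper avoids this by decoupling the two tasks. Rather than demanding a single very dense minor of size $\Theta(a+b)$, it invokes \cref{lem:dense} to get a medium-density minor $J$ with $|J|\le d'/2$ and $\Delta(\overline{J})\le |J|/2-3d'/40$: this needs only average degree $\Theta(d)$, with no $\sqrt{\log}$ overhead. After \cref{cor:rooted}, the resulting model graph $F$ has every pair of vertices sharing $\Theta(d)$ common neighbors outside $[t]$, which already suffices for the linkage step. The $(\vep,a)$-density is then achieved separately: a random subset trick splits the vertices outside $[t]$ into a part $I$ (reserved for linking, still with $\Theta(d)$ common neighbors) and a part $L$ with $\delta(L)\ge d/20$, and \cref{thm:denselinearhwg} is applied to $L$ with parameter $a$ alone --- requiring only $\Theta\bigl(a\sqrt{\log(1/\vep)}\bigr)$, which the hypothesis supplies. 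You should restructure the proof along these lines: the dense minor need only have $a$ vertices, and the rest of the structure should be "medium density'' rather than $(1-\vep)$-dense.
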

	\begin{proof}
		The constant $C$ is chosen implicitly to satisfy the inequalities throughout the proof.
		
		Let $R=\{r_1,\ldots,r_a\}$, $S=\{s_1,\ldots,s_b\}$, and $T=\{t_1,\ldots,t_b\}$ be subsets of $V(G)$ such that $s_i=t_j$ for some $i,j\in[b]$ only if $i=j$.
		We may assume $S\cap T=\emptyset$.
		Since $G$ has minimum degree at least $2a+2b$,
		there is $R'=\{r_1',\ldots,r_i'\}\subset V(G)\setminus (R\cup S\cup T)$
		with $r_ir_i'\in E(G)$ for all $i\in[a]$.
		Let $G':=G\setminus R$;
		then $G'$ is $(a+2b)$-connected and has average degree at least $d':=d-a$.
		By \cref{lem:dense}, $G'$ has a minor $J$ with $\abs{J}\le \frac{1}{2}d'$ and $\Delta(\overline{J})\le\frac{1}{2}\abs{J}-\frac{3}{40}d'$.
		Let $m:=\abs{J}$, $n:=\Delta(\overline{J})$, and $t:=a+2b$;
		then $m-n\ge\frac{1}{2}m+\frac{3}{40}d'\ge 2t$.
		So by~\cref{cor:rooted},
		$G'$ has an $F$ model $\{G_1,\ldots,G_{m-t}\}$ for some graph $F$ with 
		\begin{itemize}
			\item $V(F)=[m-t]$ and $\Delta(\overline{F})\le \Delta(\overline{J})
			\le\frac{1}{2}m-\frac{3}{40}d'$;
			
			\item $r_i'\in V(G_i)$ for all $i\in[a]$; and
			
			\item $s_i\in V(G_{a+i})$ and $t_i\in V(G_{a+2i})$ for all $i\in[b]$.
		\end{itemize}
		Note that every two vertices of $F$ have at least
		$m-t-1-2(\frac{1}{2}m-\frac{3}{40}d')=\frac{3}{40}d'-t-1$ common neighbors,
		and so at least $\frac{3}{40}d'-2t-1=\frac{3}{40}(d-a)-2t-1\ge \frac{1}{20}d$ common neighbors in $V(F)\setminus[t]$.
		
		%	Put $p:=m-2t\ge n$, and let $\mac B':=\{B_{t+1},\ldots,B_{t+p}\}$.
		Let $I$ be a random subset of $V(F)\setminus[t]$ where each element is included independently with probability $1/3$.
		Then by Markov's inequality, $\abs{I}\le \frac{1}{2}(\abs{F}-t)$ with probability at least $1/3$.
		By Hoeffding's inequality, two given vertices of $F$ have at most $\frac{1}{80}d$ common neighbors in $I$ with probability at most $e^{-d/3000}$.
		So by a union bound,
		with probability at least $1-d^2e^{-d/3000}>2/3$,
		every two vertices of $F$ have at least $\frac{1}{80}d$ common neighbors in $I$.
		Hence there is a choice of $I$ such that $\abs{I}\le\frac{1}{2}(\abs{F}-t)$
		and every two vertices of $F$ have at least $\frac{1}{80}d$ common neighbors in $I$.
		
		Let $L:=F\setminus (I\cup[t])$; then $\abs{L}
		\ge\abs{F}-\frac{1}{2}(\abs{F}+t)
		=\frac{1}{2}(\abs{F}-t)
		=\frac{1}{2}m-t$, and so
		\[\delta(L)\ge\abs{L}-1-\Delta(\overline{F})
		\ge \frac{1}{2}m-t-1-\left(\frac{1}{2}m-\frac{3}{40}d'\right)
		= \frac{3}{40}(d-a)-t-1\ge\frac{1}{20}d
		\ge C_{\ref{thm:denselinearhwg}}a\sqrt{\log(1/\vep)}.\]
		Thus by~\cref{thm:denselinearhwg}, $L$ contains an $H$ model $\{L_1,\ldots,L_{a}\}$ for some $(\vep,a)$-dense graph $H$.
		
		Now, 
		%	let $\pi\colon [a]\to V(H)$ be a bijection.
		since every two vertices of $F$ (thus $L$) have at least $\frac{1}{80}d\ge a+b$ common neighbors in $I$,
		there exist distinct $j_1,\ldots,j_a,k_1,\ldots,k_b\in I$ such that in $F$,
		\begin{itemize}
			\item for every $i\in[a]$, $j_i$ is adjacent to $i$ and has a neighbor in $V(L_{i})$ for all $i\in[a]$; and
			
			\item for every $i\in[b]$, $k_i$ is adjacent to $a+i$ and $a+2i$.
		\end{itemize}
		For every $i\in[a]$, let
		$M_i'$ be the subgraph of $G'$ induced by $V(G_i)\cup V(G_{j_i})\cup\bigcup_{j\in V(L_{i})}V(G_j)$;
		and for every $i\in[b]$,
		let $P_{i}$ be some path between $s_{i}$ and $t_{i}$ in the subgraph of $G'$ induced by $V(G_{a+i})\cup V(G_{k_i})\cup V(G_{a+2i})$. 
		Then $\{M_1',\ldots,M_a'\}$ is an $H$ model rooted at $R'$ in $G'$
		and is vertex-disjoint from the linkage $\{P_1,\ldots,P_b\}$ linking $\{(s_i,t_i):i\in[b]\}$.
		Let $M_i:=M_i'\cup{r_ir_i'}$ for every $i\in[a]$;
		then $\{M_1,\ldots,M_a\}$ is an $H$ model rooted at $R$ in $G$ which is vertex-disjoint from $\{P_1,\ldots,P_b\}$.
		This proves~\cref{lem:woven2}.
	\end{proof}
	\section{Chromatic-inseparability}
	\label{sec:insep}
	This section presents the proof of \cref{thm:insep}.
	We need more definitions.
	For graphs $G,H$ and an $H$ model $\mac M=\{M_1,\ldots,M_{k}\}$ in $G$ where $k=\abs{H}\ge1$,
	a subset $S$ of $V(G)$ is a \emph{core} of $\mac M$ if for all distinct $i,j\in[k]$,
	$S\cap V(M_i)$ and $S\cap V(M_j)$ are not anticomplete in $G$ if $ij\in E(H)$.
	For $U\subset V(\mac M)$ such that $\mac M$ is rooted at $U$, a subgraph $F$ of $G$ is \emph{tangent to $\mac M$ at $U$} if $ V(F)\cap V(M_i)=U\cap V(M_i)$
	for all $i\in[k]$.
	The following lemma formalizes the iterative construction of an $(\vep,t)$-dense minor.
	\begin{lemma}
		\label{lem:chi-insep}
		There is an integer $C=C_{\ref{lem:chi-insep}}>0$ such that the following holds.
		Let $\vep\in(0,1/2)$ and $\Gamma:=\log(1/\vep)$.
		Let $t\ge\Gamma^{1/2}$ be an integer, let $G$ be a graph, and let
		\[g(G,\vep,t)=g_{\ref{lem:chi-insep}}(G,\vep,t)
		=1+\max_{F\subset G}
		\left\{\frac{\chi(F)}{t}:
		\abs{F}\le C^2\Gamma^4t,\,
		\text{$F$ is $(\vep,t)$-dense-minor-free}\right\}.\]
		If $G$ is $(C^2t\cdot g(G,\vep,t))$-chromatic-inseparable
		and $\chi(G)\ge2C^2t\cdot g(G,\vep,t)$,
		then for every integer $q$ with $0\le q\le\ceil{\Gamma^{1/2}}$, $G$ contains an $(\vep,t)$-dense minor
		or contains both of the following
		\begin{itemize}
			\item an $H$ model $\mac A$ rooted at $U$ with a core $S\subset V(G)$ where $U\subset S$, $\abs{S}\le q^2C_{\ref{thm:smalldense}}^2\Gamma^3t$, and $H$ is an $(\vep,s)$-dense graph with $s=q\ceil{\Gamma^{-1/2}t}$; and
			
			\item a $Ct$-connected subgraph $F$ tangent to $\mac A$ at $U$ where $\chi(F)\ge\chi(G)-\frac{1}{2}C^2t\cdot g(G,\vep,t)$.
		\end{itemize} 
	\end{lemma}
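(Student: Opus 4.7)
The plan is induction on $q$. The base case $q=0$ is essentially vacuous for the model (take $\mac A = \emptyset$ and $U = S = \emptyset$), and the only object to produce is the giant $F$: applying \cref{thm:kappachi} to $G$ with $k$ of order $Ct$ yields a $Ct$-connected subgraph with chromatic number at least $\chi(G) - 2Ct \ge \chi(G) - \frac{1}{2}C^2 t \cdot g(G,\vep,t)$, using $g \ge 1$ and $C$ large enough.

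For the inductive step, suppose we have produced $(\mac A_q, U_q, S_q, F_q)$, and assume (else we are done) that $F_q$ contains no $(\vep,t)$-dense minor. Since $\chi(F_q)$ is still a large multiple of $Ct \cdot g(F_q, \vep, t)$ by the hypothesis, \cref{cor:smalldense} applied inside $F_q$ produces at least $\Gamma$ vertex-disjoint $Ct$-connected ``hubs'' $B_1, B_2, \ldots$, each on at most $C_{\ref{thm:smalldense}}^2 \Gamma^3 t$ vertices; reserve $O(q+1)$ of them for this step. Pick a set $U \subset V(F_q) \setminus (S_q \cup \bigcup_j V(B_j))$ of $\lceil \Gamma^{-1/2}t\rceil$ fresh vertices to serve as the new roots added at this step. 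Using the $Ct$-connectivity of $F_q$ together with \cref{thm:menger} and \cref{lem:redundant}, route two families of vertex-disjoint paths $\mac P$ (from $S_q$) and $\mac Q$ (from $U$) to designated entry points on the boundaries of the reserved hubs. Within each reserved hub, apply \cref{lem:woven1}, which is available because each hub is $Ct$-connected — far more than is needed to seed a small dense minor there — to produce a rooted small dense model inside the hub whose fragments avoid the short portions of $\mac P$ and $\mac Q$ running through it. Now patch: each of the $s_q$ old fragments of $\mac A_q$ is enlarged by attaching a $\mac P$-path, a small fragment of a hub-model, and a $\mac Q$-path terminating at a fresh root in $U$; the remaining hub-fragments and uncommitted $U$-vertices combine to form $\lceil \Gamma^{-1/2}t\rceil$ entirely new fragments. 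Tuning the number of reserved hubs and the density parameter in \cref{lem:woven1} guarantees that any two of the $s_{q+1}$ resulting fragments are adjacent except on at most $\vep\binom{s_{q+1}}{2}$ pairs, yielding the required $(\vep,s_{q+1})$-dense $\mac A_{q+1}$ rooted at $U_{q+1} := U_q \cup U$. The new core $S_{q+1} := S_q \cup \bigcup_{j\in \text{reserved}} V(B_j) \cup U$ has size at most $q^2 C_{\ref{thm:smalldense}}^2 \Gamma^3 t + O\bigl((q+1)\,C_{\ref{thm:smalldense}}^2 \Gamma^3 t\bigr) \le (q+1)^2 C_{\ref{thm:smalldense}}^2 \Gamma^3 t$.

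Finally, to produce $F_{q+1}$, invoke chromatic-inseparability: if a suitable component of $G \setminus S_{q+1}$ containing $U_{q+1}$ were to lose too much chromatic number, then together with a remaining high-chromatic piece of $G$ it would contradict the $(C^2 t \cdot g)$-chromatic-inseparability hypothesis. Hence this component still has chromatic number close to $\chi(G)$, and applying \cref{thm:kappachi} inside it extracts the desired $Ct$-connected $F_{q+1}$ tangent to $\mac A_{q+1}$ at $U_{q+1}$, with per-step chromatic loss of order $Ct$ that sums across the $\lceil \Gamma^{1/2}\rceil$ iterations to at most $\frac{1}{2}C^2 t g$ for $C$ large enough. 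The hard part is coordination inside the inductive step: simultaneously routing the two path families $\mac P, \mac Q$ through $F_q$ with prescribed hub-endpoints, constructing the internal hub-models via wovenness so they avoid those very paths, and bookkeeping the core size to preserve the quadratic bound $q^2 C_{\ref{thm:smalldense}}^2 \Gamma^3 t$; this bound is precisely what dictates the exact number of hubs reserved per step, while tuning the density inside the hub-models via \cref{lem:woven1}, together with those $O(q+1)$ hubs, is what delivers $(\vep, s_{q+1})$-density of the resulting new model.
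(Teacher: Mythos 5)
Your high-level plan matches the paper's: induction on $q$, pull out several small highly connected hubs via \cref{cor:smalldense}, use wovenness inside the hubs, route paths from the old roots and from the new giant into a designated ``connector'' hub, patch via knittedness, and invoke chromatic-inseparability to manufacture the next giant. But several technical steps are missing or handled incorrectly, and at least two of them are genuine gaps.

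First, the management of the root set is off. In the paper, the new model's roots are \emph{entirely} the endpoints in $V(G_1)$ (the new giant) of the routed paths; the old roots $U'$ become interior vertices of the enlarged fragments, and the root set is completely replaced at each step. Your $U_{q+1}:=U_q\cup U$, where $U_q\subset V(F_q)$, cannot serve as the root set of $\mac A_{q+1}$, because the new giant $F_{q+1}$ must meet the model exactly at its roots, and it cannot contain $U_q$ (which lies inside $F_q$ and is buried inside the enlarged fragments). Relatedly, you route $\mac P$ ``from $S_q$'' — the \emph{core}, which has size $\Theta(q^2\Gamma^3 t)$ — rather than from the roots $U_q$ (of size $s'\le 2t$); the available connectivity $\Theta(Ct)$ does not support $\Theta(q^2\Gamma^3 t)$ disjoint paths.

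Second, you omit the container step (\cref{lem:container}), which is load-bearing. To deduce from inseparability that a high-chromatic $k$-connected piece $G_2$ survives \emph{outside} the new model, you must remove the whole model $V(\mac A)$ (not just the core $S_{q+1}$), and you must first bound $\chi(G[V(\mac A)])$. The raw fragments $B_i$ may each have large chromatic number; the paper shrinks each $B_i$ to an induced connected $A_i$ containing $S\cap V(B_i)$ with $\chi(A_i\setminus Z_i)\le 2$ and $\abs{Z_i}$ small, so that $\chi(G[\bigcup V(A_i)])\le 2s+\chi(G[Z])=O(t\cdot g)$. Without this you cannot certify that $\chi(G\setminus A)$ remains large, nor that the resulting $G_2$ (and hence $F=G_1\cup G_2$) meets $\mac A$ only at $U$; a ``suitable component of $G\setminus S_{q+1}$'' will in general pass straight through the model's fragments and fail the tangency requirement.

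Two lesser points: the hub-models should come from \cref{lem:woven2} (which needs only high connectivity/average degree, immediate for the $Ct$-connected hubs), not \cref{lem:woven1} (which presupposes a $(\frac{1}{256}\vep,32a)$-dense minor — obtainable, but an unnecessary detour). And the density accounting you wave off as ``tuning'' is a specific inequality — the paper's Claim~5.2 checks ${s'\choose 2}+\frac{1}{2}(q-1){2r\choose 2}\le {s\choose 2}$ with $s=s'+r$ and $s'=(q-1)r$ — which is exactly why the per-step increment is $r=\ceil{\Gamma^{-1/2}t}$ new fragments and the hub-models are requested at density parameter $\frac{1}{2}\vep$ on $2r$ vertices; it is worth stating rather than deferring to tuning.
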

	\begin{proof}
		The constant $C$ is chosen implicitly to satisfy the inequalities throughout the proof.
		
		Induction on $q$.
		For $q=0$, the subgraph $F$ exists by~\cref{thm:kappachi}.
		So we may assume $q\ge1$ and the lemma holds for $q-1$.
		Let $r:=\ceil{\Gamma^{-1/2}t}\ge1$
		and $s':=s-r=(q-1)r$;
		then $s'\le \Gamma^{1/2}(\Gamma^{-1/2}t+1)\le 2t$ since $q-1\le\Gamma^{1/2}\le t$.
		We may assume $G$ is $(\vep,t)$-dense-minor-free;
		then by induction, $G$ contains
		\begin{itemize}
			\item an $H'$ model $\mac A'=\{A_1',\ldots,A_{s'}'\}$ rooted at $U'$ with a core $S'\subset V(G)$ where $U'\subset S'$,
			$\abs{S}\le (q-1)^2C_{\ref{thm:smalldense}}^2\Gamma^3t$, and $H$ is an $(\vep,s')$-dense graph; and
			
			\item a $Ct$-connected subgraph $F'$ tangent to $\mac A'$ at $U'$ where $\chi(F')\ge\chi(G)-\frac{1}{2}C^2t\cdot g(G,\vep,t)$.
		\end{itemize}
		For every $i\in[s']$, let $u_i$ be the unique vertex in $V(F')\cap A_i'$;
		then $U'=\{u_1,\ldots,u_{s'}\}$.
		Let $k:=Ct$;
		then
		\begin{align*}
			\chi(F'\setminus U')
			\ge\chi(F')-s'
			&\ge \chi(G)-\frac{1}{2}C^2t\cdot g(G,\vep,t)-2t\\
			&\ge C^2t\cdot g(G,\vep,t)
			\ge C^2t\cdot g_{\ref{cor:smalldense}}(G,\vep,t) \ge 3C_{\ref{thm:smalldense}}^2k.
		\end{align*}
		By~\cref{cor:smalldense},
		$F'\setminus U'$ has $q$ vertex-disjoint $k$-connected subgraphs $F_0,F_1,\ldots,F_{q-1}$
		with $\abs{F_i}\le C_{\ref{thm:smalldense}}^2\Gamma^3t$ for all $i\in\{0,1,\ldots,q-1\}$.
		For every $i\in[q-1]$,
		let $U_i\subset V(F_i)$ with $\abs{U_i}=2r$,
		and let $T:=U'\cup\bigcup_{i=1}^{q-1}U_i$;
		then $\abs{T}=s'+2r(q-1)=3s'$.
		Since $\kappa(F')\ge Ct\ge 6s'$, by \cref{thm:menger} and by duplicating each vertex of $T$,
		we get a set $\mac P_1$ of $6s'$ paths in $F'$ between $T$ and $V(F_0)$
		which pairwise share no vertex in $V(F')\setminus T$,
		such that each vertex of $T$ is the endpoint of exactly two paths in $\mac P_1$.
		Let $D:=\bigcup_{i=0}^{q-1}V(F_i)$;
		then
		\begin{equation}
			\label{eq:chi-insep1}
			\abs{D}\le\abs{F_0}+\abs{F_1}+\cdots+\abs{F_{q-1}}
			\le q\cdot C_{\ref{thm:smalldense}}^2\Gamma^3t
			\le C^2\Gamma^4t
		\end{equation}
		so $\chi(F[D])=\chi(G[D])\le t\cdot g(G,\vep,t)$ by the definition of $g(G,\vep,t)$.
		It follows that
		\[\chi(F'\setminus D)\ge\chi(F')-t\cdot g(G,\vep,t)
		\ge C^2t\cdot g(G,\vep,t)\ge 4k,\]
		and so by~\cref{thm:kappachi},
		$F'\setminus D$ contains a $k$-connected subgraph $G_1$ with
		\begin{equation}
			\label{eq:chi-insep2}
			\chi(G_1)\ge\chi(F'\setminus D)-2k
			\ge \chi(F')-t\cdot g(G,\vep,t)-2Ct
			\ge \chi(G)-C^2t\cdot g(G,\vep,t).
		\end{equation}
		Since $\kappa(F'\setminus T)\ge \kappa(F')-\abs{U}\ge Ct-3s'\ge 2s$,
		\cref{thm:menger} yields a set $\mac P_2$
		of $2s$ vertex-disjoint paths between $V(F_0)$ and $V(G_1)$ in $F'\setminus T$.
		By~\cref{lem:redundant} applied to $\mac P_1,\mac P_2$ and by identifying in pairs the endpoints in $V(G_1)$ of the paths of $\mac P_2$,
		we get a set $\mac P'$ of $3s'+s$ vertex-disjoint paths between $T\cup V(G_1)$ and $V(F_0)$ such that
		each vertex in $T$ is the endpoint of exactly one path in $\mac P'$.
		For each $i\in[q-1]$, since
		$2r\log^{1/2}(2/\vep)\le 4r\Gamma^{1/2}
		\le 8t$ and $3s'+s\le 10t$, we have that
		\[\kappa(F_i)
		\ge k=Ct \ge C_{\ref{lem:woven2}}\cdot 10t\ge C_{\ref{lem:woven2}}\cdot\max(2r\log^{1/2}(2/\vep),3s'+s),\]
		so by~\cref{lem:woven2}, $F_i$ is $(\frac{1}{2}\vep,2r,3s'+s)$-woven.
		Thus, by applying~\cref{lem:woven} iteratively to $F_1,\ldots,F_{q-1}$,
		we get a set $\mac P$ of $3s'+s$ vertex-disjoint paths between $T\cup V(G_1)$ and $V(F_0)$ such that
		\begin{itemize}
			\item each vertex in $T$ is the endpoint of exactly one path in $\mac P$; and
			
			\item for every $i\in[q-1]$, $F_i$ has an $H_i$ model $\mac M_i$ rooted at $U_i$ for some $(\frac{1}{2}\vep,2r)$-dense graph $H_i$
			such that $V(\mac M_i)\cap V(\mac P)= U_i$.
		\end{itemize}
		
		For every $u\in T\cup V(G_1)$,
		let $P_u$ be the path of $\mac P$ having $u$ as an endpoint.
		For every $i\in[q-1]$, for each $u\in U_i$, let $M_u$ be the member of $\mac M_i$ containing $u$;
		and also, let $X_i\cup Y_i$ be a partition of $U_i$ with $\abs{X_i}=\abs{Y_i}=r$,
		and let $Y_i=\{y_{j}^i:j\in[r]\}$. 
		Let $\bigcup_{i=1}^{q-1}X_i=\{x_1,\ldots,x_{s'}\}$.
		Let $U=\{v_1,\ldots,v_s\}$ be the vertices in $V(G_1)$ that are the endpoints of some paths in $\mac P$.
		For every $i\in[s']$, let $V_i$ be the set of three vertices in $V(F_0)$ linked to $\{u_i,x_i,v_i\}$ via $\mac P$;
		and for every $j\in[r]$,
		let $W_j$ be the set of $q$ vertices in $V(F_0)$ linked to $\{y_j^i:i\in[q-1]\}\cup\{v_{s'+j}\}$ via $\mac P$.
		Since $\kappa(F_0)\ge k=Ct\ge 11(3s'+s)$,
		\cref{thm:knit} implies that $F_0$ is $(3s'+s,s)$-knit,
		and so $F_0$ contains vertex-disjoint connected subgraphs $J_1,\ldots,J_s$ such that
		$V_i\subset V(J_i)$ for all $i\in[s']$
		and $W_j\subset V(J_{s'+j})$ for all $j\in[r]$. Now, let
		\begin{itemize}
			\item $B_i:=(A_i'\cup P_{u_i})\cup (M_{x
				_i}\cup P_{x_i})\cup J_i\cup P_{v_i}$ for every $i\in[s']$; and
			
			\item $B_{s'+j}:=J_{s'+j}\cup P_{v_{s'+j}}\cup\bigcup_{i\in[q-1]}(M_{y_j^i}\cup P_{y_j^i})$ for all $j\in[r]$.
		\end{itemize}
		Then $\mac B:=\{B_1,\ldots,B_s\}$ is an $H$ model with $S:=S'\cup D\cup U$ as a core, for some $H$ with $\abs{H}=s$.
		By~\eqref{eq:chi-insep1},
		\begin{equation}
			\label{eq:chi-insep3}
			\abs{S}=\abs{S'}+\abs{D}+\abs{U}
			\le
			(q-1)^2C_{\ref{thm:smalldense}}^2\Gamma^3t+q\cdot C_{\ref{thm:smalldense}}^2\Gamma^3t+s
			\le q^2C_{\ref{thm:smalldense}}^2\Gamma^3t.
		\end{equation}
		\begin{claim}
			\label{claim:chi-insep1}
			$H$ is $(\vep,s)$-dense.
		\end{claim}
		\begin{subproof}
			Since $\{A_1',\ldots,A_{s'}'\}$ is an $H'$ model in $G$ and $H'$ is $(\vep,s')$-dense,
			there are at most $\vep{s'\choose2}$ anticomplete pairs among $\{B_i:i\in[s']\}$.
			Moreover, the number of anticomplete pairs of the form $\{V(B_i),V(B_{s'+j})\}$ for $i\in[s'],j\in[r]$
			plus the number of anticomplete pairs among $\{V(B_{s'+j}):j\in[r]\}$
			is at most $\sum_{i=1}^{q-1}\edge(\overline{H_i})\le \frac{1}{2}(q-1)\vep{2r\choose2}$,
			since each of $H_1,\ldots,H_{q-1}$ is $(\frac{1}{2}\vep,2r)$-dense
			and every nonedge of each $H_i$ contributes to at most one such anticomplete pair.
			Thus, in order to show that $H$ is $ (\vep,s)$-dense, it suffices to prove
			${s'\choose2}+\frac{1}{2}(q-1){2r\choose2}\le{s\choose2}$, which holds since (recall that $s=s'+r$)
			\[{s\choose2}-{s'\choose2}
			=\frac{1}{2}(s-s')(s+s'-1)
			\ge rs'=(q-1)r^2
			\ge \frac{1}{2}(q-1){2r\choose2}.
			\qedhere\]
		\end{subproof}
		Note that while $G_1$ is $k$-connected and tangent to $\mac B$ at $U$,
		it only satisfies $\chi(G_1)\ge\chi(G)- C^2t\cdot g(G,\vep,t)$ by~\eqref{eq:chi-insep2}.
		For every $i\in[s]$, by~\cref{lem:container},
		there is an induced connected subgraph $A_i$ of $G[V(B_i)]$ and $Z_i\subset V(A_i)$
		with $S\cap V(B_i)\subset Z_i$, $\abs{Z_i}\le 3\abs{S\cap V(B_i)}$,
		and $\chi(A_i\setminus Z_i)\le2$.
		Then $\mac A=\{A_1,\ldots,A_s\}$ is an $H$ model in $G$ with $S$ as a core such that $G_1$ is tangent to $\mac A$ at $U\subset S\cap V(\mac A)$. 
		Let $Z:=\bigcup_{i=1}^sZ_i$; then
		\[\abs{Z}
		\le3\abs{S}
		\le 3q^2C_{\ref{thm:smalldense}}^2\Gamma^3t\le C^2\Gamma^4t,\]
		where the second inequality holds by~\eqref{eq:chi-insep3}, and so $\chi(G[Z])\le t\cdot g(G,\vep,t)$.
		Let $A:=G[\bigcup_{i=1}^sV(A_i)]$;
		then
		\begin{equation*}
			%		\label{eq:chi-insep3}
			\chi(A)\le2s+\chi(G[Z])
			\le 2s+t\cdot g(G,\vep,t)
			\le 7t\cdot g(G,\vep,t).
		\end{equation*}
		It follows that
		\[\chi(G\setminus A)\ge\chi(G)-\chi(A)
		\ge C^2t\cdot g(G,\vep,t)-7t\cdot g(G,\vep,t)
		\ge 4Ct,\]
		and so by~\cref{thm:kappachi},
		$G\setminus A$ has a $k$-connected subgraph $G_2$ with
		\[\chi(G_2)\ge\chi(G\setminus A)-2k
		\ge \chi(G)-7t\cdot g(G,\vep,t)-2Ct
		\ge \chi(G)-\frac{1}{2}C^2t\cdot g(G,\vep,t).\]
		Thus, if $\abs{V(G_1)\cap V(G_2)}\le k$, then
		\[\chi(G_2\setminus G_1)
		\ge \chi(G)-7t\cdot g(G,\vep,t)-2Ct-k
		\ge \chi(G)-C^2t\cdot g(G,\vep,t),\]
		and so $G_1$ and $G_2\setminus G_1$ would be two vertex-disjoint subgraphs of $G$ each with chromatic number at least $\chi(G)-C^2t\cdot g(G,\vep,t)$,
		contradicting that $G$ is $(C^2t\cdot g(G,\vep,t))$-chromatic-inseparable.
		Hence $G_1,G_2$ have more than $k$ common vertices,
		which implies that $F:=G_1\cup G_2$ is a $k$-connected subgraph tangent to $\mac A$ at $U$ with
		$\chi(F)\ge\chi(G_2)\ge \chi(G)-\frac{1}{2}C^2t\cdot g(G,\vep,t)$.
		This proves~\cref{lem:chi-insep}.
	\end{proof}
	We are now ready to prove \cref{thm:insep},
	which we restate here (in a slightly different form) for the convenience of the readers.
	\begin{theorem}
		\label{thm:chi-insep}
		There is an integer $C>0$ such that the following holds.
		Let $\vep\in(0,1/2)$ and $\Gamma:=\log(1/\vep)$. 
		Let $t\ge\Gamma^{1/2}$ be an integer, let $G$ be a graph, and let
		\[g(G,\vep,t)
		:=1+\max_{F\subset G}\left\{\frac{\chi(F)}{t}:\abs{F}\le C\Gamma^4t,\,
		\text{$F$ is $(\vep,t)$-dense-minor-free}\right\}.\]
		If $G$ is $(Ct\cdot g(G,\vep,t))$-chromatic-inseparable
		and $\chi(G)\ge 2Ct\cdot g(G,\vep,t)$, then $G$ has a $(\vep,t)$-dense-minor.
	\end{theorem}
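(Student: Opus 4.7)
The plan is to derive \cref{thm:chi-insep} as an almost direct corollary of \cref{lem:chi-insep}: all the heavy iterative construction has been packaged into that lemma, and the theorem's statement is what comes out when one runs the iteration to its maximum depth $q = \ceil{\Gamma^{1/2}}$, at which point the resulting $(\vep,s)$-dense minor already has $s \ge t$ and can be trimmed by averaging.

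First I would choose the theorem's constant $C$ to be $C_{\ref{lem:chi-insep}}^2$, so that the size bound $\abs{F}\le C\Gamma^4 t$ appearing in the theorem's definition of $g(G,\vep,t)$ matches the bound $\abs{F}\le C_{\ref{lem:chi-insep}}^2\Gamma^4 t$ appearing in $g_{\ref{lem:chi-insep}}(G,\vep,t)$; with this choice the two functions coincide. The chromatic-inseparability and chromatic number hypotheses of the theorem then translate directly into the hypotheses required by \cref{lem:chi-insep}, so the lemma is available for every admissible $q$.

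Next I would apply \cref{lem:chi-insep} with $q := \ceil{\Gamma^{1/2}}$, which is permitted since $t\ge \Gamma^{1/2}$ guarantees the lemma's size hypothesis $t\ge\Gamma^{1/2}$ holds and since $q\le\ceil{\Gamma^{1/2}}$ is exactly the upper limit allowed. The lemma's conclusion splits into two cases: either $G$ already has an $(\vep,t)$-dense minor (and we are done), or $G$ contains an $H$ model for some $(\vep,s)$-dense graph $H$ with
\[ s \;=\; \ceil{\Gamma^{1/2}}\cdot \ceil{\Gamma^{-1/2}t} \;\ge\; \Gamma^{1/2}\cdot \Gamma^{-1/2}t \;=\; t. \]
In the second case, since $s\ge t$ and $H$ is $(\vep,s)$-dense, the averaging remark from the introduction produces an $(\vep,t)$-dense subgraph of $H$, and taking the corresponding subcollection of the fragments of the $H$ model yields an $(\vep,t)$-dense minor of $G$.

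The proof is essentially bookkeeping: no genuinely new idea is required beyond what is already in \cref{lem:chi-insep}. The only points to verify are that the constants line up (straightforward once $C$ is set to $C_{\ref{lem:chi-insep}}^2$), that $q=\ceil{\Gamma^{1/2}}$ is a legal parameter (immediate from the hypothesis $t\ge\Gamma^{1/2}$ and the allowed range in the lemma), and that $s\ge t$ (immediate from the two ceilings multiplying to at least $t$). The only potential obstacle is conceptual rather than technical, namely recognizing that \cref{lem:chi-insep} with its maximum $q$ already carries $(\vep,t)$-density up to the averaging step; all genuine difficulty has been absorbed into the inductive proof of that lemma.
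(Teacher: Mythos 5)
Your proposal matches the paper's own proof: set $C:=C_{\ref{lem:chi-insep}}^2$, apply \cref{lem:chi-insep} with $q=\ceil{\Gamma^{1/2}}$, and observe that $s=q\ceil{\Gamma^{-1/2}t}\ge t$, so the resulting $(\vep,s)$-dense minor trims by averaging to an $(\vep,t)$-dense minor.
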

	\begin{proof}
		Let $C:=C_{\ref{lem:chi-insep}}^2$.
		\cref{thm:chi-insep} follows from~\cref{lem:chi-insep} with $q=\ceil{\Gamma^{1/2}}$,
		as ${s=q\ceil{\Gamma^{-1/2}t}\ge t}$.
	\end{proof}
	\section{Finishing the proof}
	\label{sec:proof}
	This section deals with the general case.
	The recursive construction of an $(\vep,t)$-dense minor will be made rigorous by the following lemma.
	\begin{lemma}
		\label{lem:chisepwoven}
		There is an integer $C=C_{\ref{lem:chisepwoven}}>0$ such that the following holds.
		Let $\vep\in(0,\frac{1}{256})$, and let $\Gamma:=\log(1/\vep)$.
		Let $t\ge\Gamma^2$ be an integer, let $G$ be a graph, and let
		\[f(G,\vep,t)=f_{\ref{lem:chisepwoven}}(G,\vep,t)
		:=1+\max_{F\subset G}\left\{\frac{\chi(F)}{a}:
		\Gamma^{-1/2}t\le a\le t,\,
		\abs{F}\le C\Gamma^4a,\,
		\text{$F$ is $(\vep,a)$-dense-minor-free}\right\}.\]
		If $t$ is a power of $3$, $s$ is an integer with $0\le s\le\log_3t$, and $G$ is $Ca$-connected with $a=(\frac{2}{3})^st$ and
		\[\chi(G)\ge C(t+11a\cdot f(G,\vep,t)),\]
		then for $\xi=\log_{\frac{3}{2}}(\frac{4}{3})=0.7095\ldots$,
		\begin{itemize}
			\item if $s\ge\frac{1}{2}\log_{\frac{3}{2}}\Gamma$, then $G$ is $(\vep,a,3a)$-woven; and
			
			\item if $s\le\frac{1}{2}\log_{\frac{3}{2}}\Gamma$,
			then $G$ is $((\frac{3}{4})^s\Gamma^{\xi}\vep,a,3a)$-woven.
		\end{itemize}
		In particular, $G$ is always $(\Gamma\vep,a,3a)$-woven.
	\end{lemma}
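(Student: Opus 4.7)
The plan is a strong downward induction on $s$: for $s\ge\tfrac{1}{2}\log_{3/2}\Gamma$ (equivalently $a\le\Gamma^{-1/2}t$) I would establish the first bullet directly, without any inductive hypothesis, and for smaller $s$ I would establish the second bullet by invoking the lemma at level $s{+}1$ three times. The \emph{in particular} statement then follows since $\Gamma^\xi\le\Gamma$ and $(3/4)^s\le1$.

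For the \textbf{base case} my approach is to appeal to \cref{lem:woven1}: since $G$ is $Ca$-connected with $C$ sufficiently large, it suffices to exhibit a $(\vep/256,32a)$-dense minor inside $G$. I would produce such a minor by applying \cref{thm:insep} to a vertex-minimal subgraph $G'\subseteq G$ whose chromatic number exceeds the insep threshold $k$ at parameters $(\vep/256,32a)$; vertex-minimality will automatically make $G'$ be $k$-chromatic-inseparable. The crucial estimate I would prove is $g_{\ref{thm:insep}}(G',\vep/256,32a)=O(\Gamma^{1/2})$: any $F$ entering that maximum is $(\vep/256,32a)$-dense-minor-free along with all its subgraphs, so \cref{thm:denselinearhwg} forces its degeneracy, and hence its chromatic number, to be $O(a\Gamma^{1/2})$. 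Consequently $k=O(a\Gamma^{1/2})=O(t)$ since $a\le\Gamma^{-1/2}t$, and this is dominated by the $Ct$ summand in the hypothesis $\chi(G)\ge C(t+11a\cdot f(G,\vep,t))$.

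For the \textbf{inductive step}, set $a':=\tfrac23 a$, $\vep_{s+1}:=(3/4)^{s+1}\Gamma^\xi\vep$, and $\vep_s:=\tfrac43\vep_{s+1}$. Given prescribed $R=\{r_1,\ldots,r_a\}$ and pair-sets $S,T$ of size $3a$, I would first use the minimum degree of $G$ to pick, for each $i$, two neighbors $v_{i,1},v_{i,2}$ of $r_i$ outside $R\cup S\cup T$, assembling them into $V$ of size $2a$. Inside $G\setminus(R\cup V)$ I would extract three vertex-disjoint subgraphs $L_1,L_2,L_3$ of connectivity $\ge Ca'$ and chromatic number meeting the level-$(s{+}1)$ threshold; my recipe is to peel off two chromatic-separations and then apply \cref{cor:smalldense} and \cref{thm:kappachi} inside each resulting piece, short-circuiting via \cref{thm:insep} plus \cref{lem:woven1} whenever a piece is too chromatic-inseparable to yield a valid split. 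The inductive hypothesis then certifies each $L_j$ as $(\vep_{s+1},a',3a')$-woven. Next, I would use \cref{thm:menger} to route $2a$ vertex-disjoint paths $\mac Q$ from $V$ to $V(L_1)\cup V(L_2)\cup V(L_3)$, evenly distributing the endpoints so each $L_j$ receives $a'=\tfrac{2a}{3}$ of them and the two paths from each $r_i$ land in two distinct $L_j$'s. Inside each $L_j$ I would invoke wovenness via \cref{lem:woven} to produce simultaneously a rooted $(\vep_{s+1},a')$-dense sub-model and a weaving of the sub-portion of the $(S,T)$-linkage that traverses $L_j$. Finally I amalgamate: the fragment $B_i$ is $\{r_i\}$ glued to the two $\mac Q$-paths from $v_{i,1},v_{i,2}$ and to the two sub-fragments in the chosen pair of $L_j$'s, and the full $(S,T)$-linkage is stitched from the three $L_j$-internal sub-linkages with external $G$-segments where needed.

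The density verification partitions the $a$ fragments into $3$ groups of $a/3$ indexed by which unordered pair $\{L_j,L_k\}$ they inhabit; a same-group pair of fragments shares both $L_j$ and $L_k$, so a non-edge requires non-edges in both sub-models (contribution $\le\vep_{s+1}^2$), while a different-group pair shares exactly one $L_j$ (contribution $\le\vep_{s+1}$). Summing gives at most $3\binom{a/3}{2}\vep_{s+1}^2+\tfrac{a^2}{3}\vep_{s+1}\le\tfrac{4}{3}\vep_{s+1}\binom{a}{2}=\vep_s\binom{a}{2}$ non-edges, matching the required $\vep_s$-density; this is where the exponent $\xi=\log_{3/2}(4/3)$ is forced. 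The main obstacle I anticipate lies in the extraction of $L_1,L_2,L_3$: simultaneously meeting the connectivity and $\chi$-preconditions for the level-$(s{+}1)$ recursion across all three pieces requires careful bookkeeping of the $11a'\cdot f(\cdot,\vep,t)$ term through the two chromatic-separations and the \cref{thm:kappachi}-style extractions, and gracefully handing off to \cref{thm:insep} whenever a chromatic-inseparable obstruction surfaces. A secondary technical point is arranging the woven sub-linkages in the three $L_j$'s so that, once stitched with the external $\mac Q$-paths, they form a single valid linkage for $(S,T)$; the intended mechanism is \cref{lem:woven} combined with \cref{thm:knit} whenever several stitch points must be collected inside one piece.
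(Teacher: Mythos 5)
Your overall architecture (backward induction on $s$, base case for $a\le\Gamma^{-1/2}t$, three woven pieces $L_1,L_2,L_3$ at level $s{+}1$ with fragments amalgamated in pairs of $\{L_j,L_k\}$, and the $\xi=\log_{3/2}(4/3)$ accounting) matches the paper. But the base case has a genuine error: a vertex-minimal $G'$ with $\chi(G')>k$ has $\chi(G')=k+1$, and then $G'$ is \emph{always} $k$-chromatic-separable (take any two single vertices, which have $\chi\ge 1=\chi(G')-k$). So vertex-minimality does not deliver the chromatic-inseparability hypothesis that \cref{thm:insep} requires; the detour through \cref{thm:insep} in the base case simply does not run. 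The paper avoids this entirely: $\chi(G)\ge Ct+1$ already gives a subgraph of average degree $\ge Ct\ge C\Gamma^{1/2}a$, and one applies \cref{lem:woven2} directly (or, in your framing, \cref{thm:denselinearhwg} to get the $(\vep/256,32a)$-dense minor and then \cref{lem:woven1}), with no chromatic-inseparability in sight.

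In the inductive step your plan omits the hub. The paper first deletes $Z=R\cup S\cup T$, applies \cref{thm:smalldense} to obtain a small highly connected subgraph $F_0$, and routes $Z$ to $F_0$ by a redundant Menger system; the entire $(S,T)$-linkage is then stitched inside $F_0$ using $4a$-linkedness, while the model lives in a separate giant $G'$. Your sketch instead routes everything directly to the $L_j$'s and tries to push the $(S,T)$-linkage through them, but gives no mechanism for how the Menger paths leaving $S,T$ reach three prescribed targets, nor how the three $L_j$-internal sub-linkages reassemble into a single linkage for $(s_i,t_i)$. Relatedly, \cref{thm:menger} does not control where path endpoints land, so you cannot conclude that ``each $L_j$ receives $a'$ of them and the two paths from each $r_i$ land in two distinct $L_j$'s''; the paper uses $2a$-linkedness (\cref{thm:linked}) of $G_0=G'\setminus R'$ precisely to prescribe the target vertex of each path. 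Finally, your density bound $\vep_{s+1}^2$ for same-group pairs presumes independence of non-edges in $H_j$ and $H_k$ that you have not justified; it is unnecessary anyway, since the crude bound $\edge(\overline{H})\le\sum_{j}\edge(\overline{H_j})\le 3\vep_{s+1}\binom{a'}{2}<\tfrac{4}{3}\vep_{s+1}\binom{a}{2}$ already closes the induction.
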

	\begin{proof}
		The constant $C$ is chosen implicitly to satisfy the inequalities throughout the proof.
		
		Since $t\ge\Gamma^2$, $\log_3t\ge 2\log_3\Gamma> \frac{1}{2}\log_{3/2}\Gamma$.
		We proceed by backward induction~on~$s$.
		If $\frac{1}{2}\log_{\frac{3}{2}}\Gamma\le s\le \log_3t$
		then $a\le\Gamma^{-1/2}t$.
		Since $\chi(G)\ge Ct+1$, by \cref{lem:avgkappa},
		$G$ contains a subgraph with connectivity at least $\frac{1}{4}Ct \ge C_{\ref{lem:woven2}}t\ge C_{\ref{lem:woven2}}\Gamma^{1/2}a$,
		thus is $(\vep,a,3a)$-woven by~\cref{lem:woven2}.
		
		We may thus assume that $s<\frac{1}{2}\log_{\frac{3}{2}}\Gamma$
		and that the lemma is true for $s+1$;
		note that $(\frac{3}{4})^{s+1}\Gamma^{\xi}
		>\frac{3}{4}\Gamma^{\xi/2}>1$.
		Put $\vep':=\Gamma^{\xi}\vep$.
		Let $b:=3a$, and let $R=\{r_1,\ldots,r_a\}$, $S=\{s_1,\ldots,s_b\}$,
		$T=\{t_1,\ldots,t_b\}$ be subsets of $V(G)$
		such that $s_i=t_j$ for some $i,j\in[b]$ only if $i=j$;
		we may assume $R,S,T$ are pairwise disjoint.
		Let $Z:=R\cup S\cup T$,
		and let $G_1:=G\setminus Z$;
		then $\kappa(G_1)\ge(11C-7)a\ge 8a$,
		in particular $G_1$ has average degree at least $(11C-7)a\ge C_{\ref{thm:smalldense}}a$.
		If $G_1$ has a $(\frac{1}{256}\vep,32a)$-dense minor,
		then by~\cref{lem:woven1}, it is $(\vep,a,3a)$-woven and we are done.
		Thus we may assume $G_1$ is $(\frac{1}{256}\vep,32a)$-dense-minor-free;
		and so by~\cref{thm:smalldense} with $k=40a$,
		$G_1$ has a subgraph $F_0$ with  $\kappa(F_0)\ge 40a$ and (note that $\vep<\frac{1}{256}$)
		\[\abs{F_0}\le C_{\ref{thm:smalldense}}^2\cdot32a\cdot \log^3(256/\vep)
		\le 256C_{\ref{thm:smalldense}}^2a\cdot\log^3(1/\vep)
		=256C_{\ref{thm:smalldense}}^2\Gamma^3a
		\le C\Gamma^4a.\]
		Since $a=(\frac{2}{3})^st\ge\Gamma^{-1/2}t$,
		$\chi(F_0)\le a\cdot f(G,\vep,t)$ by the definition of $f(G,\vep,t)$.
		Since $\kappa(G)\ge Ca\ge 14a$, \cref{thm:menger} yields a set $\mac P_1$ of $14a$ paths in $G$ between $Z$ and $V(F_0)$ which pairwise share no vertex in $V(G)\setminus Z$,
		such that each vertex of $Z$ is the endpoint of exactly two paths in $\mac P_1$.
		We may assume the paths in $\mac P_1$ are induced paths in $G$;
		and so $\chi(G[V(\mac P_1)])\le 28a$.
		Let $F:=G_1\setminus(V(F_0)\cup V(\mac P_1))$; then
		\begin{align*}
			\chi(F)\ge \chi(G_1)-\chi(G[V(F_0)\cup V(\mac P_1)])
			&\ge \chi(G)-35a-a\cdot f(G,\vep,t)\\
			&\ge Ct+(11C-36)a\cdot f(G,\vep,t)
			\ge 4Ca.
		\end{align*}
		By~\cref{thm:kappachi}, $F$ contains a $Ca$-connected subgraph $G'$ with
		\begin{equation}
			\label{eq:reducing1}
			\chi(G')\ge \chi(F)-2Ca
			\ge Ct+(9C-36)a\cdot f(G,\vep,t).
		\end{equation}
		\begin{claim}
			\label{claim:woven}
			$G'$ is $((\frac{3}{4})^s\vep',a,0)$-woven.
		\end{claim}
		\begin{subproof}
			Let $R'=\{r_1',\ldots,r_a'\}\subset V(G')$;
			we need to show there is an $H$ model in $G'$ rooted at $R'$ for some $((\frac{3}{4})^s\vep',a)$-dense graph $H$.
			Let $G_0:=G'\setminus R'$.
			Since $\kappa(G')\ge Ca$,
			there is $U=\{u_1,\ldots,u_{2a}\}\subset V(G_0)$
			with $r_i'$ adjacent to $u_i$ and $u_{a+i}$ for all $i\in[a]$.
			Let $G_0':=G_0\setminus U$.
			Put $m:=C_{\ref{thm:chi-insep}}\cdot32a\cdot g_{\ref{thm:chi-insep}}(G,\frac{1}{256}\vep,32a)$.~Since \[C_{\ref{thm:chi-insep}}\cdot32a\cdot\log^4(256/\vep)
			\le 512C_{\ref{thm:chi-insep}}a\cdot \log^4(1/\vep)
			\le C\Gamma^4a,\]
			we have $f(G,\vep,t)\ge g_{\ref{thm:insep}}(G,\frac{1}{256}\vep,32a)$;
			so by~\eqref{eq:reducing1},
			\[\chi(G_0')\ge \chi(G')-\abs{R'\cup U}
			=\chi(G')-3a\ge Ct+(9C-39)a\cdot f(G,\vep,t)
			\ge 3m.\]
			Since $G_0'$ is $(\frac{1}{256}\vep,32a)$-dense-minor-free, and since
			$32a\ge 32\Gamma^{-1/2}t
			\ge 32\Gamma^{1/2}
			\ge \log^{1/2}(256/\vep)$,
			\cref{thm:chi-insep} implies that
			$G_0'$ is $m$-chromatic-separable.
			Thus, $G_0'$ has vertex-disjoint subgraphs $F_1,F_1'$ with
			$\chi(F_1),\chi(F_1')\ge \chi(G_0')-m\ge 2m$.
			By~\cref{thm:chi-insep} again,
			$G_1'$ has vertex-disjoint subgraphs $F_2,F_3$ with
			$\chi(F_2),\chi(F_3)\ge\chi(F_1')-m
			\ge \chi(G_0')-2m$.
			Put $a':=(\frac{2}{3})^{s+1}\vep'=\frac{2}{3}a$; and observe that for every $j\in\{1,2,3\}$,
			\[\chi(F_j)\ge\chi(G_0')-2m
			\ge Ct+(9C-39-64C_{\ref{thm:chi-insep}})a\cdot f(G,\vep,t)
			\ge 4Ca',\]
			so by~\cref{thm:kappachi}, $F_j$ has a $Ca'$-connected subgraph $L_j$ with
			(note that $\frac{3}{2}(9C-29-64C_{\ref{thm:chi-insep}})-2C\ge 11C$)
			\[\chi(L_j)\ge \chi(F_j)-2Ca'
			\ge Ct+(9C-29-64C_{\ref{thm:chi-insep}})a\cdot f(G,\vep,t)-2Ca'
			\ge C(t+11a'\cdot f(G,\vep,t)).\]
			Hence, by induction, $L_j$ is $((\frac{3}{4})^{s+1}\vep',a',3a')$-woven;
			let $V_j=\{v_{(j-1)a'+1},v_{(j-1)a'+2},\ldots,v_{ja'}\}\subset V(L_j)$.
			Since $\kappa(G_0)\ge (C-1)a\ge 20a$,
			$G_0$ is $2a$-linked by~\cref{thm:linked};
			so there is a linkage $\mac P'$ in $G_0$ linking $\{(u_i,v_i):i\in[2a]\}$.
			By~\cref{lem:woven} applied iteratively to each of $J_1,J_2,J_3$ (note that $3a'=2a$),
			there is a linkage $\mac P=\{P_1,\ldots,P_{2a}\}$ linking $\{(u_i,v_i):i\in[2a]\}$ in $G_0$
			and $((\frac{3}{4})^{s+1}\vep',a')$-dense graphs $H_1,H_2,H_3$ such that for each $j\in\{1,2,3\}$,
			there is an $H_j$ model $\mac M_j$ rooted at $V_j$ in $L_j$
			such that $V(\mac M_j)\cap V(\mac P)=V_j$.
			
			Now, for every $i\in[2a]$, let $j\in\{1,2,3\}$ be such that $v_i\in V_j$,
			and let $M_i$ be the member of $\mac M_j$ containing $v_i$.
			For every $i\in[a]$, let
			\[D_i:=M_i\cup P_i\cup r_i'u_i\cup r_i'u_{a+i}\cup P_{a+i}\cup M_{a+i},\]
			then it is not hard to see that $\{D_1,\ldots,D_a\}$ is an $H$ model rooted at $R'$ in $G'$ for some graph $H$ with
			\[\edge(\overline{H})
			\le\edge(\overline{H_1})
			+\edge(\overline{H_2})
			+\edge(\overline{H_3})
			\le 3\left(\frac{3}{4}\right)^{s+1}\vep'{a'\choose2}
			\le \left(\frac{3}{4}\right)^s\vep'{a\choose2},\]
			where the last inequality holds because
			$\frac{9}{4}{a'\choose2}=\frac{1}{8}(3a')(3a'-3)
			<\frac{1}{8}(2a)(2a-2)={a\choose2}$.
			Therefore $H$ is $((\frac{3}{4})^s\vep',a)$-dense,
			proving~\cref{claim:woven}.
		\end{subproof}
		The rest of the proof is now straightforward. 
		Recall that $\mac P_1$ is a set of $14a$ paths between $Z$ and $V(F_0)$ in $G$ which pairwise share no vertex in $G_1=G\setminus Z$,
		such that each vertex of $Z$ is the endpoint of exactly two paths in $\mac P_1$.
		Since $\kappa(G_1)\ge\kappa(G)-\abs{Z}\ge(C-7)a\ge 2a$,
		\cref{thm:menger} yields a set $\mac P_2$ of $2a$ vertex-disjoint paths between $V(G')$ and $V(F_0)$ in $G_1$.
		By~\cref{lem:redundant} applied to $\mac P_1,\mac P_2$ and by identifying in pairs the endpoints in $V(G')$ of the paths of $\mac P_2$,
		we obtain a set $\mac Q$ of $\abs{Z}+a=8a$ vertex-disjoint paths between $Z\cup V(G')$ and $V(F_0)$
		such that each vertex of $Z$ is an endpoint of some path of $\mac Q$.
		For each $i\in[a]$, let $x_i$ be the vertex in $V(F_0)$ linked to $r_i$ by some path $Q_i\in\mac Q$;
		and for each $i\in[b]$ (recall that $b=3a$),
		let $y_i,z_i$, respectively, be the vertices in $V(F_0)$ linked to $s_i,t_i$ by some paths $P_i,P_i'\in\mac Q$.
		Let $\{r_1',\ldots,r_a'\}$ be the set of endpoints in $V(G')$ of the paths of $\mac Q$;
		and for every $i\in[a]$,
		let $w_i$ be the vertex in $V(F_0)$ linked to $r_i'$ by some path $P_i'\in\mac Q$.
		Since $\kappa(F_0)\ge 40a$,
		$F_0$ is $4a$-linked by \cref{thm:linked};
		so there is a linkage $\mac N=\{N_1,\ldots,N_{4a}\}$ linking 
		$\{(x_i,w_i):i\in[a]\}\cup\{(y_i,z_i):i\in[b]\}$ in $F_0$.
		By~\cref{claim:woven},
		$G'$ has an $H$ model $\{M_1,\ldots,M_a\}$ rooted at $\{r_1',\ldots,r_a'\}$ for some
		$((\frac{3}{4})^s\vep',a)$-dense graph $H$.
		Now, let
		\begin{itemize}
			\item $A_i:=(Q_i\cup N_i)\cup (Q_i'\cup M_i)$ for every $i\in[a]$; and
			
			\item $B_{i}:=P_i\cup N_{a+i}\cup P_i'$ for every $i\in[b]$.
		\end{itemize}
		Then $\mac A=\{A_1,\ldots,A_a\}$ is an $H$ model rooted at $R$ in $G$
		and $\mac B=\{B_1,\ldots,B_b\}$ is a linkage in $G$ linking $\{(s_i,t_i):i\in[b]\}$
		such that $V(\mac A)\cap V(\mac B)=\emptyset$.
		This proves~\cref{lem:chisepwoven}.
	\end{proof}
	We are now ready to prove \cref{thm:main},
	which we restate here (in a slightly different from) for the convenience of the readers.
	\begin{theorem}
		\label{thm:reducing}
		There is an integer $C>0$ such that the following holds.
		Let $\vep\in(0,\frac{1}{256})$, and let $\Gamma:=\log(1/\vep)$.
		Let $t\ge4\Gamma^2$ be an integer, let $G$ be a graph, and let
		\[f(G,\vep,t)
		:=1+\max_{F\subset G}
		\left\{\frac{\chi(F)}{a}:a\ge(2\Gamma)^{-1/2}t,\,\abs{F}\le C\Gamma^4a,\,\text{$F$ is $(\vep,a)$-dense-minor-free}\right\}.\]
		If $\chi(G)\ge Ct\cdot f(G,\vep,t)$,
		then $G$ has a $(\vep,t)$-dense minor.
	\end{theorem}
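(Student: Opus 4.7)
The plan is to apply \cref{lem:chisepwoven} to a highly connected subgraph of $G$, with $\vep$ slightly rescaled so that the $\Gamma^{\xi}$-loss at depth $s=0$ is compensated. First, we may assume $t$ is a power of $3$ by replacing $t$ with the next power of $3$ (at most tripling it), since by averaging an $(\vep, 3t)$-dense graph contains an $(\vep, t)$-dense subgraph. Second, by \cref{thm:denselinearhwg} we may assume $\chi(G) < C_0 t\sqrt{\Gamma}$ for an absolute constant $C_0$ to be fixed, otherwise $G$ already has a $(\vep,t)$-dense minor. Put $\vep_0 := \vep/(2\Gamma)$; then $\Gamma_0 := \log(1/\vep_0) \le 2\Gamma$ and $\Gamma_0 \vep_0 \le \vep$, while the hypothesis $t \ge 4\Gamma^2 \ge \Gamma_0^2$ is preserved. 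Using \cref{thm:kappachi}, extract a $C_{\ref{lem:chisepwoven}} t$-connected subgraph $G' \subseteq G$ with $\chi(G') \ge \chi(G) - 2C_{\ref{lem:chisepwoven}} t$.

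Next apply \cref{lem:chisepwoven} to $G'$ with parameters $\vep_0, t$ and $s=0$, $a=t$. Its conclusion that $G'$ is $(\Gamma_0 \vep_0, t, 3t)$-woven upgrades to $(\vep, t, 3t)$-wovenness by the choice of $\vep_0$. Specializing the wovenness definition to $b=0$ (empty $S, T$) yields, for any prescribed $R \subset V(G')$ with $|R|=t$, a $(\vep, t)$-dense model in $G'$ rooted at $R$; hence $G'$ (and thus $G$) has a $(\vep, t)$-dense minor, the desired conclusion.

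The main obstacle is verifying the chromatic hypothesis $\chi(G') \ge C_{\ref{lem:chisepwoven}}(t + 11t\cdot f_{\ref{lem:chisepwoven}}(G', \vep_0, t))$ from $\chi(G) \ge Ct\cdot f_{\ref{thm:reducing}}(G, \vep, t)$. This amounts to bounding $f_{\ref{lem:chisepwoven}}(G', \vep_0, t)$ by a case split on a subgraph $F \subseteq G'$ realizing its maximum. If $F$ is $(\vep, a)$-dense-minor-free, then $F$ also qualifies for $f_{\ref{thm:reducing}}(G, \vep, t)$: indeed, $a \ge \Gamma_0^{-1/2} t \ge (2\Gamma)^{-1/2} t$ and $|F| \le C_{\ref{lem:chisepwoven}} \Gamma_0^4 a \le 16 C_{\ref{lem:chisepwoven}} \Gamma^4 a$, so for $C \ge 16C_{\ref{lem:chisepwoven}}$ one gets $\chi(F)/a \le f_{\ref{thm:reducing}}(G, \vep, t)$. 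Otherwise $F$ has a $(\vep, a)$-dense minor but is $(\vep_0, a)$-dense-minor-free; \cref{thm:denselinearhwg} applied with parameter $\vep_0$ to every subgraph of $F$ then bounds their average degree by $C_{\ref{thm:denselinearhwg}} a\sqrt{\Gamma_0}$, so $\chi(F)/a = O(\sqrt{\Gamma})$. Combining, $f_{\ref{lem:chisepwoven}}(G', \vep_0, t) \le 1 + O(f_{\ref{thm:reducing}}(G, \vep, t)) + O(\sqrt{\Gamma})$; the residual $O(\sqrt{\Gamma})$ term is absorbed by the preprocessing bound $\chi(G) < C_0 t\sqrt{\Gamma}$ once $C$, $C_0$, $C_{\ref{lem:chisepwoven}}$ are chosen in the correct relative order, at which point the lemma's hypothesis follows from the theorem's.
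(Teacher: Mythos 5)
Your proposal follows the paper's overall route: rescale $\vep$ to a smaller value so that the $\Gamma\vep$-loss from \cref{lem:chisepwoven} is compensated, pass to a power of $3$, extract a $C_{\ref{lem:chisepwoven}}t$-connected subgraph $G'$ by \cref{thm:kappachi}, and apply \cref{lem:chisepwoven} at depth $s=0$. The paper chooses $\vep'$ by $\vep=\vep'\log(1/\vep')$, you choose $\vep_0=\vep/(2\Gamma)$; these are essentially the same rescaling and both satisfy $\Gamma_0\vep_0\le\vep$ and $\Gamma_0\le 2\Gamma$.

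However, your handling of the chromatic hypothesis of \cref{lem:chisepwoven} has a genuine gap. In your ``bad case,'' the maximizing $F\subset G'$ is $(\vep_0,a)$-dense-minor-free but not $(\vep,a)$-dense-minor-free, and you bound $\chi(F)/a=O(\sqrt{\Gamma})$ via \cref{thm:denselinearhwg}. You then claim this residual is absorbed by the preprocessing assumption $\chi(G)<C_0t\sqrt{\Gamma}$. But that assumption is an \emph{upper} bound on $\chi(G)$, whereas what \cref{lem:chisepwoven} needs is a \emph{lower} bound $\chi(G')\ge C_{\ref{lem:chisepwoven}}(t+11t\cdot f_{\ref{lem:chisepwoven}}(G',\vep_0,t))$, whose right-hand side can be $\Theta(t\sqrt{\Gamma})$ when the bad case drives $f_{\ref{lem:chisepwoven}}(G',\vep_0,t)$ up to $\Theta(\sqrt{\Gamma})$. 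Since the only lower bound on $\chi(G')$ available is $\chi(G')\ge Ct\cdot f(G,\vep,t)-O(t)$, which is $\Theta(t)$ when $f(G,\vep,t)$ is bounded (say $=1$), the hypothesis of \cref{lem:chisepwoven} need not hold for any universal constant $C$. Also note that the bad case cannot simply be discharged by saying ``$G$ already has an $(\vep,a)$-dense minor,'' because the admissible $a$ in $f_{\ref{lem:chisepwoven}}(G',\vep_0,t)$ ranges down to $\Gamma_0^{-1/2}t<t$, so the resulting dense minor may be too small. Your case split does surface a real subtlety that the paper's one-line chain $f(G,\vep,t)\ge f_{\ref{lem:chisepwoven}}(G,\vep',t)\ge f_{\ref{lem:chisepwoven}}(G,\vep',t')$ leaves unjustified (the $(\vep,a)$-dense-minor-free constraint on the left is \emph{stronger} than the $(\vep',a)$-dense-minor-free constraint on the right, so the feasible set of pairs $(F,a)$ is not monotone in the claimed direction), but your proposed absorption argument does not close the gap.
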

	\begin{proof}
		Let $C:=80C_{\ref{lem:chisepwoven}}$.
		Let $\vep'\in(0,\frac{1}{256})$ satisfy $\vep=\vep'\log(1/\vep')$;
		then
		$\Gamma=\log(1/\vep')-\log\log(1/\vep')$,
		and so $\Gamma\le\log(1/\vep')\le2\Gamma$.
		Thus $t\ge 4\Gamma^2\ge\log^2(1/\vep')$.
		Let $t':=3^{\ceil{\log_3t}}$;
		then $t\le t'\le 3t$,
		and so
		\[f(G,\vep,t)\ge f_{\ref{lem:chisepwoven}}(G,\vep',t)
		\ge f_{\ref{lem:chisepwoven}}(G,\vep',t')\ge1.\]
		Hence $\chi(G)\ge Ct\cdot f(G,\vep, t)\ge 4C_{\ref{lem:chisepwoven}}t'$;
		so by~\cref{thm:kappachi}, $G$ has a $C_{\ref{lem:chisepwoven}}t'$-connected subgraph $G'$ with
		\[\chi(G')\ge \chi(G)-2C_{\ref{lem:chisepwoven}}t'
		\ge Ct\cdot f(G,\vep,t)
		-2C_{\ref{lem:chisepwoven}}t
		\ge 24C_{\ref{lem:chisepwoven}}t'\cdot f(G,\vep',t').\]
		By~\cref{lem:chisepwoven} with $s=0$
		and by the choice of $\vep'$,
		$G'$ is $(\vep,t',3t')$-woven,
		and so contains a $(\vep,t')$-dense minor.
		By averaging, $G'$ contains a $(\vep,t)$-dense minor, and so does $G$.
		This proves~\cref{thm:reducing}.
	\end{proof}
	\section{Additional remarks}
	An \emph{odd minor} of a graph $G$ is a graph obtained from a subgraph of $G$ by a sequence of edge cut contractions;
	and so every odd minor of $G$ is a minor of $G$.
	Odd Hadwiger's conjecture,
	a strengthening of the original one suggested by Gerards and Seymour (see~\cite[Section 6.5]{MR1304254}),
	says that every graph with chromatic number at least $t$ has an odd $K_t$ minor,
	and is still open.
	There is also the odd variant of linear Hadwiger's conjecture which is open as well,
	which asserts that for some universal constant $C>0$, every graph with chromatic number at least $Ct$ contains an odd $K_t$ minor.
	Linear Hadwiger's conjecture and its odd variant are recently shown to be equivalent by Steiner~\cite{MR4379303},
	who actually proved the following stronger result via an elegant argument.
	\begin{theorem}
		\label{thm:steiner}
		Let $H$ be a graph, and let $m>0$ be such that every graph with no $H$ minor has chromatic number at most $m$.
		Then every graph with no odd $H$ minor has chromatic number at most $2m$.
	\end{theorem}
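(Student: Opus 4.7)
The plan is to establish the contrapositive: starting with a graph $G$ satisfying $\chi(G) > 2m$, produce an odd $H$ minor. I may assume $G$ is connected, since $\chi(G)$ is attained on some component and an odd $H$ minor of a component is one of $G$. Fix a root $r \in V(G)$, perform a BFS, and denote by $V_i$ the set of vertices at distance exactly $i$ from $r$; let $A = \bigcup_{i \text{ even}} V_i$ and $B = \bigcup_{i \text{ odd}} V_i$. Since BFS produces no edges between nonconsecutive layers, $G[A]$ is the vertex-disjoint union of the subgraphs $G[V_i]$ with $i$ even, and likewise for $G[B]$.

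The first step is the chromatic inequality $\chi(G) \le \chi(G[A]) + \chi(G[B])$, verified by coloring $G[A]$ and $G[B]$ with disjoint palettes (automatically proper on $G$, since cross-edges between $A$ and $B$ receive colors from different palettes). Combined with $\chi(G) > 2m$, this forces, say, $\chi(G[A]) > m$, and since $G[A]$ is a disjoint union of the $G[V_j]$ for even $j$, some layer subgraph $G[V_j]$ has chromatic number exceeding $m$. The hypothesis on $m$ then supplies an ordinary $H$ minor inside $G[V_j]$, with connected branch sets $\{S_w : w \in V(H)\} \subset V_j$ and witnessing cross-edges lying entirely in $V_j$.

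The second step is to lift this $H$ minor in $G[V_j]$ to an odd $H$ minor of $G$. The natural global $2$-coloring witnessing the odd minor is the BFS parity $\phi(v) := \mathrm{dist}(v,r) \bmod 2$. The crucial observation is that every witnessing cross-edge lies inside the single layer $V_j$ and is therefore monochromatic under $\phi$, so these edges automatically supply the required same-color connections between branch sets. What remains is to enlarge each $S_w$ to a connected subgraph $M_w$ of $G$ containing $S_w$ that is bipartite with respect to $\phi$, with the $M_w$'s pairwise vertex-disjoint. The natural candidate is to take a spanning tree $T_w$ of $G[S_w]$ and replace each (monochromatic) tree edge $uv$ by a bichromatic detour through $V_{j-1}$ or $V_{j+1}$ along BFS tree edges.

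The main obstacle is carrying out this detouring \emph{simultaneously across all branch sets} while preserving disjointness: the naive choice of routing through BFS parents in $V_{j-1}$ can cause conflicts when the same parent is reused for vertices of different $S_w$'s. Resolving this is where elegance is needed, presumably by extending the detours deeper into $V_{j-2},V_{j-3},\ldots$ along the BFS tree (whose edges are all bichromatic under $\phi$) to disambiguate conflicting routings via the tree structure, disjointly assigning tree-paths to each branch set. Once the disjoint bipartite $M_w$'s are in place, combining them with the monochromatic within-$V_j$ cross-edges yields the desired odd $H$ minor of $G$, completing the contrapositive.
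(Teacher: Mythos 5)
The paper does not prove Theorem~\ref{thm:steiner}; it is quoted verbatim from Steiner~\cite{MR4379303}, so there is no internal proof to compare against. Evaluating your argument on its own: the first half is sound. BFS from a root of a connected component gives layers $V_0, V_1, \ldots$ with no edges between $V_i$ and $V_{i'}$ when $|i-i'|\ge 2$, hence $G[A]$ and $G[B]$ (even and odd unions) each decompose as a disjoint union of the layer graphs, the palette-splitting argument gives $\chi(G)\le\chi(G[A])+\chi(G[B])$, and $\chi(G)>2m$ forces some single layer $G[V_j]$ with $\chi(G[V_j])>m$, hence containing an $H$ minor.

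The second half, however, is not a proof, and the hole you flag is not a cosmetic one. You want to replace each edge $uv$ of a spanning tree $T_w$ of $G[S_w]$ (with $u,v\in V_j$, so $\phi$-monochromatic) by a $\phi$-bichromatic detour, and you propose to route "through $V_{j-1}$ or $V_{j+1}$ along BFS tree edges." But the BFS parent of $u$ need not be adjacent to $v$, so the only canonical bichromatic tree path from $u$ to $v$ climbs to the least common ancestor of $u$ and $v$ in the BFS tree — potentially all the way to the root $r$. Consequently the Steiner trees of different $S_w$'s in the BFS tree will in general intersect (indeed they may all contain $r$), and "extending the detours deeper into $V_{j-2},V_{j-3},\ldots$" only makes this worse, since BFS tree paths converge rather than diverge as you move toward the root. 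This disjoint-routing step is precisely what the paper characterizes as Steiner's "elegant argument," and it is exactly what your proposal leaves unresolved; without it you have not shown that an $H$ minor of $G[V_j]$ lifts to an odd $H$ minor of $G$. One further small inaccuracy: you cannot in general avoid detours by reassigning the $2$-coloring on the spanning trees $T_w$ themselves — the resulting parity constraints along cycles alternating between tree edges and cross-edges form an $\mathbb{F}_2$-system that can be infeasible — so the enlargement step really is necessary, and its disjointness must be argued, not presumed.
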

	By~\cref{thm:steiner},~\cref{thm:chi} is equivalent to its odd version, as follows.
	\begin{theorem}
		\label{thm:chi2}
		There is an integer $C>0$ such that for every $\vep\in(0,\frac{1}{256})$ and every integer $t\ge2$,
		every graph with chromatic number at least $Ct\log\log(1/\vep)$ contains an odd $(\vep,t)$-dense minor.
	\end{theorem}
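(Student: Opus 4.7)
The plan is to derive \cref{thm:chi2} from \cref{thm:chi} by invoking \cref{thm:steiner} in a uniform fashion across the family of target graphs. The key observation I will exploit is that the proof of \cref{thm:steiner}, rather than merely bounding the chromatic number, produces an explicit partition of $V(G)$ into two parts each of whose induced subgraph is $H$-minor-free; moreover, this partition depends only on $G$ (it arises from a BFS parity decomposition) and not on $H$. Consequently, the same partition works simultaneously for every $H$ in any prescribed family of graphs.

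With this strengthened form of Steiner's theorem in hand, the proof is short. Let $K$ be the constant from \cref{thm:chi} and set $C := 2K$. Suppose $G$ has no odd $(\vep, t)$-dense minor; then for every $(\vep, t)$-dense graph $H$, $G$ has no odd $H$ minor. First, I apply the uniform form of \cref{thm:steiner} to the family of all $(\vep, t)$-dense graphs on $t$ vertices to obtain a partition $V(G) = A \cup B$ such that both $G[A]$ and $G[B]$ are $(\vep, t)$-dense-minor-free. Next, \cref{thm:chi} gives $\chi(G[A]), \chi(G[B]) < Kt\log\log(1/\vep)$. Finally, $\chi(G) \le \chi(G[A]) + \chi(G[B]) < 2Kt\log\log(1/\vep) = Ct\log\log(1/\vep)$; contrapositively, any graph with chromatic number at least $Ct\log\log(1/\vep)$ contains an odd $(\vep, t)$-dense minor, as required.

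The main obstacle I expect is justifying the uniform partition version of \cref{thm:steiner}. The statement as quoted concerns a fixed $H$, so one must either inspect Steiner's original argument in~\cite{MR4379303} to confirm that the 2-coloring it constructs is oblivious to $H$, or give a direct self-contained proof: root a BFS tree at an arbitrary vertex of $G$, take $A$ and $B$ to be the sets of vertices at even and odd BFS depth, and verify that any $H$ minor in $G[A]$ (respectively $G[B]$) lifts to an odd $H$ minor of $G$ by attaching each branch set to the root via its BFS-tree paths and using BFS parity to supply the required bipartiteness within branch sets. Once this lifting step is in place, the rest of the argument is immediate, and the equivalence of \cref{thm:chi} and \cref{thm:chi2} asserted by the author drops out with $C = 2K$.
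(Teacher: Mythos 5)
Your overall strategy matches the paper's: the paper derives \cref{thm:chi2} from \cref{thm:chi} by a one-line appeal to \cref{thm:steiner}, exactly as you propose, and you correctly spot the subtlety that the paper glosses over. As stated, \cref{thm:steiner} concerns a single fixed $H$, whereas \cref{thm:chi} only guarantees \emph{some} $(\vep,t)$-dense minor without specifying which $H$; so one genuinely needs the stronger, $H$-oblivious form of Steiner's lemma (a single vertex-bipartition $V(G)=A\cup B$ such that every minor of $G[A]$ and of $G[B]$ is an odd minor of $G$), which then applies uniformly to the whole family of $(\vep,t)$-dense graphs. That observation is the real content here, and you have it. Given the uniform partition, your bookkeeping with $C=2K$ is correct.

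The one place your sketch is off is the self-contained lifting argument at the end. Attaching each branch set to the root via its BFS-tree paths does not work as described: those paths all terminate at the root and generally share many ancestors, so the resulting extended branch sets would not be vertex-disjoint. Moreover, BFS parity alone does not resolve the sign constraints on the connecting edges -- one can set up a model $\{B_1,\ldots,B_t\}$ inside a single BFS level where coloring each spanning tree $G[B_i]$ by parity forces a frustrated cycle of constraints among the inter-branch-set edges. The actual lifting in Steiner's argument is more delicate: it perturbs the branch-set trees locally (e.g., rerouting through a single vertex in the adjacent level to convert a path $a{-}b$ into a star, thereby flipping the relative parity of $a$ and $b$) rather than extending all the way to the root. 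Since you explicitly flag this as the step you would need to verify against Steiner's paper, this is an acknowledged gap rather than an error, but the proposed direct proof would not compile as written.
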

	\hypersetup{bookmarksdepth=-1}
	\subsection*{Acknowledgements}
	This work was partially done at the workshop \dd Seymour is $70+2\vep$\ee{} at LIP, ENS de Lyon in June, 2022.
	The author would like to thank the participants for creating a hospitable working environment,
	and to thank Paul Seymour, in particular, for helpful remarks and encouragement.
	He would also like to thank Luke Postle for useful discussions.
	\hypersetup{bookmarksdepth}

\end{document}